\documentclass[12pt]{amsart}
\usepackage{pifont}
\usepackage{txfonts}
\usepackage{}
\usepackage{amsfonts}
\usepackage{graphicx}
\usepackage{epstopdf}
\usepackage{amsmath}
\usepackage{amsthm}
\usepackage{latexsym,bm}
\usepackage{amssymb}
\usepackage{esint}
\usepackage{enumitem}
\usepackage{indentfirst}
\usepackage{amscd}
\oddsidemargin0cm \evensidemargin0cm \textwidth16cm \textheight22cm
\topmargin -0.4in \makeatletter


\pagestyle{plain}\pagenumbering{arabic}

\vfuzz2pt 
\hfuzz2pt 
\newtheorem{thm}{Theorem}[section]
\newtheorem{cor}[thm]{Corollary}
\newtheorem{lem}[thm]{Lemma}

\newtheorem{defn}[thm]{Definition}

\newtheorem{Remark}{Remark}
\numberwithin{equation}{section}
\numberwithin{Remark}{section}

\begin{document}

\title{On Uniqueness And Existence of Conformally Compact Einstein Metrics with Homogeneous Conformal Infinity. II}

\author{Gang Li$^\dag$}

\begin{abstract} In this paper we show that for an $\text{Sp}(k+1)$ invariant metric $\hat{g}$ on $\mathbb{S}^{4k+3}$ $(k\geq 1)$ close to the round metric, the conformally compact Einstein (CCE) manifold $(M, g)$ with $(\mathbb{S}^{4k+3}, [\hat{g}])$ as its conformal infinity is unique up to isometries. Moreover, by the result in \cite{LiQingShi}, $g$ is the Graham-Lee metric (see \cite{GL}) on the unit ball $B_1\subset \mathbb{R}^{4k+4}$. We also give an a priori estimate on the Einstein metric $g$. As a byproduct of the a priori estimates, based on the estimate and Graham-Lee and Lee's  seminal perturbation result (see \cite{GL} and \cite{Lee}), we use the continuity method directly to obtain an existence result of the non-positively curved CCE metric with prescribed conformal infinity $(\mathbb{S}^{4k+3}, [\hat{g}])$ when the metric $\hat{g}$ is $\text{Sp}(k+1)$-invariant. We also generalize the results to the case of conformal infinity $(\mathbb{S}^{15},[\hat{g}])$ with $\hat{g}$ a Spin$(9)$-invariant metric in the appendix.
\end{abstract}

\renewcommand{\subjclassname}{\textup{2000} Mathematics Subject Classification}
 \subjclass[2010]{Primary 53C25; Secondary 58J05, 53C30, 34B15}


\thanks{$^\dag$ Research partially supported by the National Natural Science Foundation of China No. 11701326, the Fundamental Research Funds of Shandong University 2016HW008 and the Young Scholars Program of Shandong University 2018WLJH85.}

\address{Gang Li, School of Mathematics, Shandong University, Jinan, Shandong Province, China}
\email{runxing3@gmail.com}

\maketitle


\section{Introduction}



This is a continuation of our previous work \cite{Li}\cite{Li2} on uniqueness and existence of conformally compact Einstein (CCE) metrics (see Definition \ref{defn_conformallycompactEinsteinmetric}) with prescribed homogeneous conformal infinity. Let $B_1$ be the unit ball in the Euclidean space $\mathbb{R}^{n+1}$ of dimension $(n+1)$, with the boundary $\mathbb{S}^n$. Given an $\text{Sp}(k+1)$ invariant metric $\hat{g}$ on $\mathbb{S}^n$ with $n=4k+3$ $(k\geq1)$, in this paper we consider the uniqueness and existence of non-positively curved CCE metrics $g$ on $B_1$ with $(\mathbb{S}^n, [\hat{g}])$ as its conformal infinity.

In \cite{GL}, for any Riemannian metric $\hat{g}$ on the $n$-sphere $\mathbb{S}^n$ which is $C^{2,\alpha}$ close to the round metric, with the aid of a gauge fixing technique and a construction of approximation solutions, Graham and Lee used the Fredholm theory of elliptic operators on certain weighted spaces to prove the existence of a CCE metric on the $(n+1)$-ball $B_1(0)$ with $(\mathbb{S}^n, [\hat{g}])$ as its conformal infinity, with the solution unique in a small neighborhood of the asymptotic solution they constructed in the weighted space. Later Lee \cite{Lee} generalized this perturbation result to more general CCE manifolds which include the case when they are non-positively curved. When $\hat{g}$ is a homogeneous metric near the round sphere metric, Biquard \cite{Biquard1} used harmonic analysis on $(\mathbb{S}^n,\hat{g})$ to give an elementary proof of the perturbation result. Given the conformal infinity $(\mathbb{S}^3,[\hat{g}])$ with $\hat{g}$ an $\text{SU}(2)$ invariant metric, Pedersen \cite{Pedersen} and Hitchin \cite{Hitchin1} filled in a global CCE metric on the $4$-ball, which has self-dual Weyl curvature, and the metric is unique under the self-duality assumption. Anderson \cite{Anderson} and Biquard \cite{Biquard2} proved that the CCE metric is unique up to isometry, provided that both the conformal infinity $(\mathbb{S}^n, [\hat{g}])$ and the non-local term in the expansion (see Theorem \ref{thm_expansion1}) of the Einstein metric at infinity are given. For more of the existence result, one refers to \cite{Anderson1}\cite{LeBrun1}\cite{FG}\cite{SKichenassamy}\cite{GS}, etc. When $\hat{g}$ is the round metric, it is proved that the CCE metric filled in must be the hyperbolic space, see \cite{Andersson-Dahl}\cite{Q}\cite{DJ}\cite{LiQingShi}, see also \cite{CLW}. Non-existence of CCE with various given data as conformal infinity is discussed in \cite{HP}\cite{Anderson}\cite{Wang}\cite{GH}\cite{GHS}, etc. For more references, one refers to \cite{GHS} and \cite{Li2}.

Recall that in \cite{Li} and \cite{Li2}, for a homogeneous metric $\hat{g}$ on $\mathbb{S}^n$, the problem of solving a non-positively curved conformally compact Einstein metric with the prescribed conformal infinity $(\mathbb{S}^n, [\hat{g}])$ is deformed to a two-point boundary value problem of a system of ordinary differential equations on the interval $x\in[0,1]$. The uniqueness of the non-positively curved solution was proved (see \cite{Li} \cite{Li2}) for $\hat{g}$ on $(\mathbb{S}^n,[\hat{g}])$ when $\hat{g}$ is a Berger metric on $\mathbb{S}^3$, an $\text{SU}(k+1)$-invariant metric on $\mathbb{S}^{2k+1}$ $(k\geq2)$ and a generalized Berger metric not far from the round metric on $\mathbb{S}^3$; while the global uniqueness of the CCE metrics was proved for all these three classes of homogeneous metrics on $\mathbb{S}^n$ which are close to the round sphere metric. In \cite{Li2}, for an $\text{Sp}(k+1)$ invariant metric $\hat{g}$ on $\mathbb{S}^n$ with $n=4k+3$, the boundary value problem is deformed to the boundary value problem $(\ref{equn_SpnEinstein01})-(\ref{equn_SpnBV01})$ of the functions $y_i=y_i(x)$ on $x\in[0,1]$ for $1\leq i\leq 4$, provided that the CCE filling-in is non-positively curved. In this paper, we aim to study the uniqueness of the solutions to this boundary value problem.  The main difficulty here in comparison to the three cases discussed before is that $y_i=y_i(x)$ fails to be monotone on $x\in[0,1]$ in general for $2\leq i\leq 4$ (see Lemma \ref{lem_Spn2monotonicityt1t22}). Recall that for the three cases dealt in \cite{Li} and \cite{Li2}, the monotonicity of $y_i$ is the starting point for the estimate of $y_i$ on $x\in[0,1]$, which gives the $C^0$ estimate of the solution and then by the Einstein equations we could derive the $C^k$ estimate of the solution. To handle this difficulty, we find that $y_i-y_j$ is monotone for $2\leq i<j\leq 4$. This combining with the Einstein equations gives the uniform bound of $y_i$. Based on this, we give an a priori estimate of the solution, and then as in \cite{Li2} we obtain Theorem \ref{thm_someSpnmetric}, an existence result of the non-positively curved conformally compact Einstein metrics using a direct continuity method.

 Using the a prior estimate on the solution and smallness of $\sup_M|W|_g$, we obtain the main result on the uniqueness of the solution.

\begin{thm}\label{thm_uniquenessSpn}
Let $\hat{g}$ be a homogeneous metric on $\mathbb{S}^n\cong \text{Sp}(k+1)/\text{Sp}(k)$ with $n=4k+3$ for $k\geq 1$ so that $\hat{g}$ has the standard diagonal form
\begin{align}\label{eqn_Spnstandardmetricform_thm}
\hat{g}=\lambda_1\sigma_1^2+\lambda_2\sigma_2^2+\lambda_3\sigma_3^2+\lambda_4(\sigma_4^2+..+\sigma_n^2),
\end{align}
at a point $q\in \mathbb{S}^n$, where $\lambda_i$ ($1\leq i \leq 4$) is a positive constant and $\sigma_1,..,\sigma_n$ are the $1$-forms with respect to the basis vectors in $\mathfrak{p}$, in the $\text{Ad}_{\text{Sp}(k)}$-invariant splitting $\text{sp}(k+1)=\text{sp}(k)\oplus \mathfrak{p}$. Assume that $\frac{\lambda_i}{\lambda_j}$ ($1\leq i, j\leq 4$) is close enough to $1$, then up to isometry the conformally compact Einstein metric filled in is unique and it is the perturbation metric in \cite{GL} on the $(n+1)$-ball $B_1(0)$ with $(\mathbb{S}^n, [\hat{g}])$ as its conformal infinity.
\end{thm}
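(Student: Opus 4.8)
The plan is to prove Theorem \ref{thm_uniquenessSpn} at the level of the two–point boundary value problem $(\ref{equn_SpnEinstein01})$--$(\ref{equn_SpnBV01})$, into which, following \cite{Li2}, the problem of filling in an $\text{Sp}(k+1)$-invariant conformal infinity is deformed. Concretely, I would show that this boundary value problem admits a \emph{unique} non-positively curved solution $(y_1,\dots,y_4)$, and then invoke the reduction of \cite{Li2} together with the perturbation theory of \cite{GL} and \cite{Lee} to identify this solution with the Graham--Lee metric and to upgrade ODE uniqueness to uniqueness of the CCE filling up to isometry. The one point that must still be secured for an \emph{arbitrary} filling, rather than only for the solution of \cite{GL}, is that it belongs to the symmetric, non-positively curved class governed by the $y_i$; here I would use the a priori estimate, the closeness of $\lambda_i/\lambda_j$ to $1$, and the smallness of $\sup_M|W|_g$ to place any filling in the weighted-space regime where the local uniqueness of \cite{GL}, \cite{Lee} (and \cite{Biquard1}) applies, so that it is isometric to an $\text{Sp}(k+1)$-invariant solution of the ODE system.

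First I would assemble the structural inputs. The a priori estimate established earlier in the paper supplies uniform $C^k$ bounds on any non-positively curved solution and, combined with $\lambda_i/\lambda_j$ close to $1$, confines the four profiles $y_i$ to a small neighborhood of the round solution, for which all $y_i$ coincide and the metric is hyperbolic. I would also record that the Graham--Lee metric is $\text{Sp}(k+1)$-invariant: since it is $C^{2,\alpha}$-close to the hyperbolic metric when $\hat g$ is close to round, each $\phi\in\text{Sp}(k+1)$ carries it to another filling with the same conformal infinity lying in the same weighted neighborhood, whence the local uniqueness forces $\phi^{*}g=g$. This metric is non-positively curved by \cite{Lee}, and so it furnishes one non-positively curved solution of $(\ref{equn_SpnEinstein01})$--$(\ref{equn_SpnBV01})$; it therefore suffices to prove there is no other.

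The technical heart, and the step I expect to be the main obstacle, is the uniqueness of the non-positively curved solution despite the non-monotonicity of $y_2,y_3,y_4$ recorded in Lemma \ref{lem_Spn2monotonicityt1t22}, which deprives me of the monotonicity-based $C^0$ estimate used for the three classes in \cite{Li} and \cite{Li2}. The plan is to exploit instead the monotonicity of the differences $y_i-y_j$ for $2\leq i<j\leq 4$: given two non-positively curved solutions with the same boundary data, I would track these monotone quantities and feed them back into the Einstein equations to recover the uniform control that the individual profiles fail to provide directly. The smallness of $\sup_M|W|_g$ — which by the Einstein condition pins all sectional curvatures near $-1$ and keeps the four profiles mutually close throughout $[0,1]$ — would then let me linearize the system about the round solution, so that the difference of two solutions solves a linear homogeneous boundary value problem whose coefficients are close to those of the round profile. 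A contraction or maximum-principle argument in this regime shows that the only such solution is zero, forcing the two solutions to coincide.

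Finally, assembling the pieces: any CCE filling of $(\mathbb{S}^n,[\hat g])$ is, by the argument of the first paragraph, isometric to an $\text{Sp}(k+1)$-invariant non-positively curved solution of $(\ref{equn_SpnEinstein01})$--$(\ref{equn_SpnBV01})$; by the uniqueness just described this solution is unique, and it is realized by the Graham--Lee metric on $B_1(0)$. Hence, up to isometry, the filling is the Graham--Lee perturbation metric, which is the assertion of Theorem \ref{thm_uniquenessSpn}.
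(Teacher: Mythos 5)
Your proposal has the right skeleton---reduce everything to uniqueness for the boundary value problem $(\ref{equn_SpnEinstein01})$--$(\ref{equn_SpnBV01})$ and exploit the a priori estimates together with the smallness of $\sup_M|W|_g$---but it breaks down at two decisive points. First, your mechanism for forcing an \emph{arbitrary} filling into the symmetric ODE class is circular: you invoke ``the a priori estimate'' and weighted-space local uniqueness from \cite{GL}, \cite{Lee}, \cite{Biquard1}, but the a priori estimates of Section \ref{section4} are proved for solutions of the ODE system, i.e.\ \emph{after} the symmetry reduction, so they cannot be applied to a filling not yet known to be symmetric; and the curvature pinching of Theorem \ref{EHBoundary} does not give closeness of the metric tensor to the Graham--Lee approximate solution in the weighted H\"older norm---establishing that is essentially the global uniqueness problem itself, and if this step worked as stated, the entire ODE analysis would be superfluous. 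The paper's route is different and non-circular: Theorem \ref{EHBoundary} gives negative curvature and simple connectedness for \emph{any} filling, and then the machinery of \cite{Li}, \cite{Li2} (center of gravity, extension of conformal Killing fields to Killing fields, geodesic defining function) forces the symmetric ansatz, so that uniqueness of the filling up to isometry is equivalent to uniqueness for the ODE problem.

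Second, the technical heart is asserted rather than proved. You claim that after linearizing about the round solution, ``a contraction or maximum-principle argument'' shows the difference $z_i=y_{1i}-y_{2i}$ vanishes. This glosses over exactly the obstacles that make the $\text{Sp}(k+1)$ case hard: (i) $z_1(0)$ need not vanish, since $K(0)$ is a nonlocal quantity not fixed by the boundary data, so the difference does not satisfy a homogeneous two-point problem in the naive sense; (ii) the system is coupled with zeroth-order terms of no definite sign, so no maximum principle applies; (iii) the equations are singular at $x=0$ and $x=1$, and near $x=0$ the bound $|y_{ij}'|\leq C_1x$ has $C_1$ \emph{not} small, so ``coefficients close to the round profile'' fails in any norm strong enough to run a contraction without first localizing. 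The paper's actual mechanism is a total variation argument: one integrates the difference equations $(\ref{equn_Spny12y22z2})$ and $(\ref{eqn_Spnz1regular1301})$ over the maximal monotonicity intervals of each $z_i$, observes that the term carrying $(t_{11}-t_{21})$ has sign opposite to the boundary terms and can be discarded, splits $[0,1]$ into $[0,\delta_0]$ (smallness from the factor $x$, with $\delta_0$ proportional to $\tau_1$) and $[\delta_0,1]$ (smallness from $\sup_M|W|_g\leq\varepsilon$, with $\varepsilon$ chosen \emph{after} $\tau_1$ because $C_2=C_2(\delta_0)$ depends on $\tau_1$), and thereby obtains $V(z_i)\leq\text{(small)}\sum_{j\neq i}V(z_j)$ for each $i$, forcing all $V(z_i)=0$. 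Your sketch contains no substitute for this sign analysis or for the careful ordering of the smallness parameters; note also that the monotonicity of $y_i-y_j$ you propose to lean on enters the paper only through the a priori bounds of Corollary \ref{cor_Spnboundivd}, not the uniqueness argument itself.
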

For Theorem \ref{thm_uniquenessSpn}, we argue by contradiction. For two solutions $(y_{11},..,y_{14})$ and $(y_{21},..,y_{24})$, denote $z_i=y_{1i}-y_{2i}$. We consider the total variation of $z_i$ on $x\in[0,1]$, as in \cite{Li2}. The difference here is that since $y_i$ is not monotone, $|y_i(x)|$ is not uniformly controlled by $|y_i(0)|$. The smallness of $\sup_M|W|_g$ plays an important role in dealing with this issue. As an easy consequence of the a priori estimates in the proof of the uniqueness result, we obtain an existence result of the CCE metrics with prescribed conformal infinity $(\mathbb{S}^{4k+3},[\hat{g}])$ with $\hat{g}$ an $\text{Sp}(k+1)$-invariant metric, see Theorem \ref{thm_someSpnmetric}.

The uniqueness and the existence results are extended to the case when the conformal infinity $(\mathbb{S}^{15}, [\hat{g}])$ with $\hat{g}$ a $\text{Spin}(9)$-invariant metric in the appendix, see Theorem \ref{thm_someSpinmetric} and Theorem \ref{thm_someSpinmetric1}, and in the system of ODEs obtained there are only two unknown functions.

Remark that recently, in \cite{CGQ}\cite{CGJQ}, the authors obtained uniqueness of CCE metric for general prescribed conformal infinity data which is close to the round $n$-sphere, which is based on a compactness argument for the Fefferman-Graham conformal compactification metrics in this situation. Our approach here for the conformal infinity which is homogeneous spheres is completely different from theirs, and shows a possibility for the uniqueness of CCE metrics which the prescribed conformal infinity is not that close to the round sphere. In \cite{Chi}, the author obtained a relatively general existence result of CCE metrics with conformal infinity $(\mathbb{S}^{4k+3}, [\hat{g}])$ with $\hat{g}$ $\text{Sp}(k+1)$-invariant by solving a dynamic system starting from the interior, but due to the complexity of the behavior of the system at infinity he could not determine precisely the conformal class $[\hat{g}]$.


The organization of the paper is as follows. In Section \ref{Sect:preliminary}, we present a two-point boundary value problem of the system of ODEs $(\ref{equn_SpnEinstein01})-(\ref{equn_SpnBV01})$ deformed from the boundary value problem of the Einstein equations and list some results about CCE manifolds used in this paper. In Section \ref{section4}, we start with the monotonicity of $y_1$ and $y_i-y_j$ with $2\leq i < j \leq 4$, see Lemma \ref{lem_monotonicitySpn01}. After that, we give uniform upper bound and lower bound of the solutions, see Corollary \ref{cor_Spnboundivd}, and hence we can then follow the estimate in \cite{Li2}. Using the Einstein equations, we then obtain an a priori estimate of the solution away from $x=1$ (see Lemma \ref{lem_Spnuniformests301}), and with the aid of the uniform bound of $\sup_M|W|_g$ when $g$ is non-positively curved we obtain the $C^2$ estimate of the solution away from $x=0$ (see Lemma \ref{lem_SpnyiboundrightGB}). A direct continuity argument gives the existence result in Theorem \ref{thm_someSpnmetric}, based on the a priori estimate and Graham-Lee and Lee's perturbation result.

In Section \ref{sect_Spnsolutionestimates}, we consider the special case $(\ref{equn_Spn1Einstein01})-(\ref{equn_Spn1BV01})$ with the unknown functions $y_1$ and $y_2$ when $\hat{g}$ is $\text{Sp}(k+1)\times \text{Sp}(1)$ invariant. Lemma \ref{lem_Spn1monotonicityt1} shows that for $\sup_M|W|_g$ relatively small, $y_2$ is still monotone, and the argument on the existence and uniqueness of the solutions for the given $\text{SU}(k+1)$ invariant conformal infinity in \cite{Li2} still holds here. In Section \ref{section5}, we consider the special case $(\ref{equn_Spn2Einstein01})-(\ref{equn_Spn2BV01})$ with the unknown functions $y_1,y_2$ and $y_3$ when $\hat{g}$ is $\text{Sp}(k+1)\times \text{SU}(1)$ invariant. It is pointed in Lemma \ref{lem_Spn2monotonicityt1t22} that $y_2$ is not monotone on $x\in[0,1]$ in general, but a uniform control of the solution by the initial data still holds when $\sup_M|W|_g$ is relatively small.

In Section \ref{section6}, we prove Theorem \ref{thm_uniquenessSpn} based on the a priori estimate in Section \ref{section4}, following the approach of the generalized Berger metric case in \cite{Li2}. Let $(y_{11},..,y_{14})$ and $(y_{21},..,y_{24})$ be two solutions to $(\ref{equn_SpnEinstein01})-(\ref{equn_SpnBV01})$, and denote $z_i=y_{1i}-y_{2i}$ for $1\leq i \leq 4$. We consider the total variation of $z_i$. When the initial data is not far from that of the round metric and the Weyl tensor satisfies that $\sup_M|W|_g$ is relatively small, using certain integration of the equations satisfied by $z_i$ ($1\leq i\leq 4$), we derive a control of the total variation of $z_i$ on some "good" intervals of monotonicity of $z_i$ by the linear combination of $\{z_1,..,z_4\}-\{z_i\}$ with small coefficients. Notice that the smallness of the coefficients dues to the smallness of $\sup_M|W|_g$ at the points away from $x=0$, which is different from \cite{Li2}, since here we do not have the monotonicity of $y_i$ in general. By summarizing the inequalities on these intervals, we obtain the control of the total variation $V(z_i)$ by the linear combination of $\{V(z_1),..,V(z_4)\}-\{V(z_i)\}$ with small coefficients for each $i$, and hence we obtain that $z_i$ vanishes identically for $1\leq i \leq 4$. Notice that $z_1$ satisfies a different type of equation to that of $z_i$ $(2\leq i\leq 4)$, and the discussion on $z_1$ in this approach is different from the other three.

Finally, in Appendix \ref{Appendix1}, we first derive the two-point boundary value problem of a system of ODEs which is equivalent to the non-positively curved CCE filling-in problem with the prescribed conformal infinity $(\mathbb{S}^{15},[\hat{g}])$, where $\hat{g}$ is Spin$(9)$-invariant. Then we generalize the uniqueness and existence argument to this case.

\vskip0.2cm
{\bf Acknowledgements.} The author would like to thank Professor Jie Qing and Professor Yuguang Shi for helpful discussion and constant support. The author is also grateful to Professor Alice Chang and Professor Matthew Gursky for their interest and encouragement.  

\section{Preliminaries}\label{Sect:preliminary}

\begin{defn}\label{defn_conformallycompactEinsteinmetric}
Suppose $\overline{M}$ is a smooth compact manifold of dimension $n+1$ with boundary $\partial M$ and $M$ is the interior of $\overline{M}$. A defining function $x$ on $\overline{M}$ is a smooth function $x$ on $\overline{M}$ such that
\begin{align*}
x>0\,\,\text{ in}\,\, M,\,\, x=0\,\,\text{and}\,\,dx\neq 0\,\, \text{on}\,\,\partial M.
\end{align*}
A complete Riemannian metric $g$ on $M$ is conformally compact if there exists a defining function $x$ such that $x^2g$ extends by continuity to a Riemannian metric $\bar{g}$ (of class at least $C^0$) on $\overline{M}$. The rescaled metric $\bar{g}=x^2g$ is called a conformal compactification of $g$. If for some smooth defining function $x$, $\bar{g}$ is in $C^k(\overline{M})$ or the Holder space $C^{k,\alpha}(\overline{M})$, we say $g$ is conformally compact of class $C^k$ or $C^{k, \alpha}$. Moreover, if $g$ is also Einstein, i.e.,
 \begin{align}\label{eqn_Einstein}
Ric_g=-n g,
\end{align}
we call $g$ a conformally compact Einstein (CCE) metric. Also, for the restricted metric $\hat{g}=\bar{g}\big|_{\partial M}$, the conformal class $(\partial M, [\hat{g}])$ is called the conformal infinity of $(M, g)$. A defining function $x$ is called a geodesic defining function about $\hat{g}$ if $\hat{g}=\bar{g}\big|_{\partial M}$ and $|dx|_{\bar{g}}=1$ in a neighborhood of the boundary.
\end{defn}

Let $(M^{n+1}, g)$ be a non-positively curved simply connected CCE manifold with its conformal infinity $(\partial M, [\hat{g}])$. By the non-positivity of the sectional curvature of $g$ and $(\ref{eqn_Einstein})$, we have that $|W|_g\leq \sqrt{(n^2-1)n}$ pointwisely. When $(M, g)$ is not the hyperbolic space, it is shown in \cite{Li} that there exists a unique point $p_0\in M$ which is the center of the unique closed geodesic
ball of the smallest radius that contains the set $S\equiv\{p\in M \big| |W|_g(p)=\sup_M|W|_g\}$. $p_0$ is called the {\it (spherical) center of
gravity of } $(M, g)$. Each conformal Killing vector field $Y$ on $(\partial M, [\hat{g}])$ can be extended continuously to a Killing vector field $X$ on $(M,g)$. Under the action generated by $X$, $p_0$ is a fixed point and hence each geodesic sphere centered at $p_0$ is an invariant subset (see \cite{Li}). The Einstein metric $g$ has the orthogonal splitting
\begin{align}\label{eqn_metrictwocomponents}
g=dr^2+g_r,
\end{align}
where $r$ is the distance function to $p_0$ and $g_r$ is the restriction of $g$ on the $r$-geodesic sphere centered at $p_0$. If moreover, $(\partial M, \hat{g})$ is a homogeneous space, then the function $x=e^{-r}$ is a geodesic defining function about $C\hat{g}$ with $C>0$ some constant, i.e. $\hat{g}=C\displaystyle\lim_{x\to 0}(x^2g_r)$. The metric can then be expressed as
\begin{align}\label{equn_splittingmetric01}
g=dr^2+\sinh^2(r)\bar{h}=x^{-2}(dx^2+\frac{(1-x^2)^2}{4}\bar{h}),
\end{align}
for $0\leq x \leq 1$. Let $(r, \theta)$ be the polar coordinates centered at $p_0$.

Let $\hat{g}$ be a homogeneous metric on $\mathbb{S}^n\cong \text{Sp}(k+1)/\text{Sp}(k)$ (here $\text{Sp}(k)$ is the symplectic group of $2k$
 variables) with $n=4k+3$ for $k\geq 1$ so that $\hat{g}$ has the standard diagonal form
\begin{align}\label{equn_Spnbvp}
\hat{g}=\lambda_1\sigma_1^2+\lambda_2\sigma_2^2+\lambda_3\sigma_3^2+\lambda_4(\sigma_4^2+..+\sigma_n^2),
\end{align}
at a point $q\in \mathbb{S}^n$, where $\lambda_i$ ($1\leq i \leq 4$) is a positive constant and $\sigma_1,..,\sigma_n$ are the $1$-forms with respect to the basis vectors in an irreducible subspace $\mathfrak{p}$ in the $\text{Ad}_{\text{Sp}(k)}$-invariant splitting $\text{sp}(k+1)=\text{sp}(k)\oplus \mathfrak{p}$.  Here $\text{sp}(k)$ is the Lie algebra of $\text{Sp}(k)$.  Let $\theta=(\theta^1,...,\theta^n)$ be a local coordinate on $\mathbb{S}^n$ near the point $q$ such that $\theta=0$ and $d\theta^i=\sigma_i$ at $q$.It is shown in \cite{Li2} by symmetry extension that along the geodesic $\gamma$ connecting $q$ and $p_0$, the metrics $\bar{h}$ in $(\ref{equn_splittingmetric01})$ restricted on the geodesic spheres have the diagonal form
\begin{align*}
\bar{h}=I_1(x)d(\theta^1)^2+I_2(x)d(\theta^2)^2+I_3(x)d(\theta^3)^2+ I_4(x)(d(\theta^4)^2+...+d(\theta^n)^2),
\end{align*}
at any point $(x, 0)$ under the coordinate $(x,\theta)$ for $0 \leq x \leq 1$, with some positive functions $I_i\in C^{\infty}([0,1])$ satisfying $I_i(1)=1$, for $1\leq i\leq4$. 

Denote $K=I_1I_2I_3I_4^{n-3},\,\,t_i=\frac{I_i}{I_4}$ for $1\leq i\leq 3$. Let $y_1=\log(K)$ and $y_{i+1}=\log(t_i)$ for $1\leq i\leq3$. It was shown in \cite{Li2} that for the $\text{Sp}(k+1)$ invariant metric $\hat{g}$ of the standard form $(\ref{equn_Spnbvp})$ at $q$ on $\mathbb{S}^n$, the Einstein equations $(\ref{eqn_Einstein})$ with the prescribed conformal infinity $(\mathbb{S}^n, [\hat{g}])$ is equivalent to the boundary value problem of a system of ordinary differential equations:
\begin{align}
&\label{equn_SpnEinstein01}y_1''-x^{-1}(1+3x^2)(1-x^2)^{-1}y_1'+\frac{1}{2n^2}[n(y_1')^2\,+\,((n-1)y_2'-y_3'-y_4')^2\\
&+\,(-y_2'+(n-1)y_3'-y_4')^2\,+\,(-y_2'-y_3'+(n-1)y_4')^2+(n-3)(y_2'+y_3'+y_4')^2]=0,\notag\\
&\label{equn_SpnEinstein02}y_1''-x^{-1}(2n-1+(2n+1)x^2)(1-x^2)^{-1}y_1'+\frac{1}{2}(y_1')^2\\
&+\,8(1-x^2)^{-2}[\,n(n-1)-(K^{-1}t_1t_2t_3)^{\frac{1}{n}}\,\big(\,(n-3)(n+5)-(n-3)(t_1+t_2+t_3)\notag\\
&+\frac{2(2t_1t_2+2t_1t_3+2t_2t_3-t_1^2-t_2^2-t_3^2)}{t_1t_2t_3}\,\big)\,]=0,\notag\\
&\label{equn_SpnEinstein03}y_2''-x^{-1}(n-1+(n+1)x^2)(1-x^2)^{-1}y_2'+\frac{1}{2}y_1'y_2'\\
&- 8 (1-x^2)^{-2} (K^{-1}t_1t_2t_3)^{\frac{1}{n}}[(n-1)t_1+2t_2+2t_3-n-5+\frac{2(t_1^2-(t_2-t_3)^2)}{t_1t_2t_3}]=0,\notag\\
&\label{equn_SpnEinstein04}y_3''-x^{-1}(n-1+(n+1)x^2)(1-x^2)^{-1}y_3'+\frac{1}{2}y_1'y_3'\\
&-8 (1-x^2)^{-2} (K^{-1}t_1t_2t_3)^{\frac{1}{n}}[(n-1)t_2+2t_1+2t_3-n-5+\frac{2(t_2^2-(t_1-t_3)^2)}{t_1t_2t_3}]=0,\notag\\
&\label{equn_SpnEinstein05}y_4''-x^{-1}(n-1+(n+1)x^2)(1-x^2)^{-1}y_{4}'+\frac{1}{2}y_1'y_4'\\
&-8 (1-x^2)^{-2} (K^{-1}t_1t_2t_3)^{\frac{1}{n}}[(n-1)t_3+2t_1+\,2t_2-n-5+\frac{2(t_3^2-(t_1-t_2)^2)}{t_1t_2t_3}]=0,\notag
\end{align}
for $y_k(x)\in C^{\infty}([0,1])$ ($1\leq k \leq 4$), with the boundary condition
\begin{align}\label{equn_SpnBV01}
t_i(0)=\frac{\lambda_i}{\lambda_4},\,\,K(1)=t_i(1)=1,\,\,y_j'(0)=y_j'(1)=0,
\end{align}
for $1\leq i \leq 3$ and $1\leq j \leq 4$. Combining $(\ref{equn_SpnEinstein01})$ and $(\ref{equn_SpnEinstein02})$, we have
\begin{align}
&\label{equn_SpnEinstein06}(y_1')^2-4nx^{-1}(1+x^2)(1-x^2)^{-1}y_1'
-\frac{1}{n(n-1)}[((n-1)y_2'-y_3'-y_4')^2+(-y_2'+(n-1)y_3'-y_4')^2\\
&+(-y_2'-y_3'+(n-1)y_4')^2+(n-3)(y_2'+y_3'+y_4')^2]
+\frac{16n}{n-1}(1-x^2)^{-2}\,[\,n(n-1)\notag\\
&-(K^{-1}t_1t_2t_3)^{\frac{1}{n}}\,\big(\,(n-3)(n+5)-(n-3)(t_1+t_2+t_3)+\frac{2(2t_1t_2+2t_1t_3+2t_2t_3-t_1^2-t_2^2-t_3^2)}{t_1t_2t_3}\,\big)\,]=0.\notag
\end{align}
Recall that any four equations in the
system of the six equations $(\ref{equn_SpnEinstein01})-(\ref{equn_SpnEinstein05})$ and $(\ref{equn_SpnEinstein06})$  containing at least two of $(\ref{equn_SpnEinstein03})-(\ref{equn_SpnEinstein05})$, combining with the expansion $(\ref{equn_expansion1})$ of the Einstein metric at $x=0$, imply the other two equations.

When $\lambda_1=\lambda_2=\lambda_3$, the metric $\hat{g}$ is $\text{Sp}(k+1)\times \text{Sp}(1)$ invariant, with the additional symmetry that $\text{Sp}(1)$ acts by right multiplication. By the symmetry extension, as the Berger metric case in \cite{Li}, we have that $I_1=I_2=I_3$ for $x\in[0,1]$, and hence
\begin{align*}
y_2=y_3=y_4,
\end{align*}
for $x\in[0,1]$. Therefore, the equations $(\ref{equn_SpnEinstein01})-(\ref{equn_SpnEinstein05})$ become
\begin{align}
&\label{equn_Spn1Einstein01}y_1''-x^{-1}(1+3x^2)(1-x^2)^{-1}y_1'+\frac{1}{2n}[(y_1')^2\,+\,3(n-3)(y_2')^2]=0,\\
&\label{equn_Spn1Einstein02}y_1''-x^{-1}(2n-1+(2n+1)x^2)(1-x^2)^{-1}y_1'+\frac{1}{2}(y_1')^2\\
&+\,8(1-x^2)^{-2}[\,n(n-1)-(K^{-1}t_1^3)^{\frac{1}{n}}\,\big(\,(n-3)(n+5)-3(n-3)t_1+6t_1^{-1}\,\big)\,]=0,\notag\\
&\label{equn_Spn1Einstein03}y_2''-x^{-1}(n-1+(n+1)x^2)(1-x^2)^{-1}y_2'+\frac{1}{2}y_1'y_2'\\
&- 8 (1-x^2)^{-2} (K^{-1}t_1^3)^{\frac{1}{n}}[(n+3)t_1-n-5+ 2t_1^{-1}]=0,\notag
\end{align}
for $y_i(x)\in C^{\infty}([0,1])$ ($1\leq i \leq 2$), with the boundary condition
\begin{align}\label{equn_Spn1BV01}
t_1(0)=\frac{\lambda_1}{\lambda_4},\,\,K(1)=t_1(1)=1,\,\,y_j'(0)=y_j'(1)=0,
\end{align}
for $j=1,2$. Respectively, $(\ref{equn_SpnEinstein06})$ becomes
\begin{align}
&\label{equn_Spn1Einstein06}(y_1')^2-4nx^{-1}(1+x^2)(1-x^2)^{-1}y_1'
-\frac{3(n-3)}{(n-1)}(y_2')^2
+\frac{16n}{n-1}(1-x^2)^{-2}\,[\,n(n-1)\\
&-(K^{-1}t_1^3)^{\frac{1}{n}}\,\big(\,(n-3)(n+5)-3(n-3)t_1+6t_1^{-1}\,\big)\,]=0.\notag
\end{align}

Now we assume that two elements of $\{\lambda_1, \lambda_2, \lambda_3\}$ coincide. Without loss of generality, we assume $\lambda_2=\lambda_3$ and $\lambda_1\neq\lambda_2$. Then $\hat{g}$  is $\text{Sp}(k+1)\times \text{SU}(1)$ invariant. Then by the symmetry extension, as the Berger metric case (see Lemma 4.2 in \cite{Li}), we have that $I_2=I_3$ for $x\in[0,1]$, and hence
\begin{align*}
y_2=y_3,
\end{align*}
for $x\in[0,1]$. In this case, the equations $(\ref{equn_SpnEinstein01})-(\ref{equn_SpnEinstein05})$ become
\begin{align}
&\label{equn_Spn2Einstein01}y_1''-x^{-1}(1+3x^2)(1-x^2)^{-1}y_1'+\frac{1}{2n^2}[n(y_1')^2\,+\,((n-1)y_2'-2y_3')^2\\
&+\,2(-y_2'+(n-2)y_3')^2\,+\,(n-3)(y_2'+2y_3')^2]=0,\notag\\
&\label{equn_Spn2Einstein02}y_1''-x^{-1}(2n-1+(2n+1)x^2)(1-x^2)^{-1}y_1'+\frac{1}{2}(y_1')^2\\
&+\,8(1-x^2)^{-2}[\,n(n-1)-(K^{-1}t_1t_2^2)^{\frac{1}{n}}\,\big(\,(n-3)(n+5)-(n-3)(t_1+2t_2)\notag\\
&+\frac{2(4t_2-t_1)}{t_1t_2^2}\,\big)\,]=0,\notag\\
&\label{equn_Spn2Einstein03}y_2''-x^{-1}(n-1+(n+1)x^2)(1-x^2)^{-1}y_2'+\frac{1}{2}y_1'y_2'\\
&- 8 (1-x^2)^{-2} (K^{-1}t_1t_2^2)^{\frac{1}{n}}[(n-1)t_1+4t_2-n-5+\frac{2t_1}{t_2^2}]=0,\notag\\
&\label{equn_Spn2Einstein04}y_3''-x^{-1}(n-1+(n+1)x^2)(1-x^2)^{-1}y_3'+\frac{1}{2}y_1'y_3'\\
&-8 (1-x^2)^{-2} (K^{-1}t_1t_2^2)^{\frac{1}{n}}[(n+1)t_2+2t_1-n-5+\frac{2(2t_2-t_1)}{t_2^2}]=0,\notag
\end{align}
for $y_i(x)\in C^{\infty}([0,1])$ ($1\leq i \leq 3$) with the boundary condition
\begin{align}\label{equn_Spn2BV01}
t_i(0)=\frac{\lambda_i}{\lambda_4},\,\,K(1)=t_i(1)=1,\,\,y_j'(0)=y_j'(1)=0,
\end{align}
for $1\leq i \leq 2$ and $1\leq j \leq 3$. Respectively, $(\ref{equn_SpnEinstein06})$ becomes
\begin{align}
&\label{equn_Spn2Einstein06}(y_1')^2-4nx^{-1}(1+x^2)(1-x^2)^{-1}y_1'
-\frac{1}{n(n-1)}[((n-1)y_2'-2y_3')^2+2(-y_2'+(n-2)y_3')^2\\
&+(n-3)(y_2'+2y_3')^2]
+\frac{16n}{n-1}(1-x^2)^{-2}\,[\,n(n-1)\notag\\
&-(K^{-1}t_1t_2^2)^{\frac{1}{n}}\,\big(\,(n-3)(n+5)-(n-3)(t_1+2t_2)+\frac{2(4t_2-t_1)}{t_2^2}\,\big)\,]=0.\notag
\end{align}

 The symmetry extension approach in \cite{Li} is inspired by \cite{Wang}, see also \cite{Andersson-Dahl} and \cite{Anderson1}.

In \cite{LiQingShi}, based on the control of the relative volume growth of
geodesic balls by the Yamabe constant at the conformal infinity, we have the following curvature pinching estimates:

\begin{thm}\label{EHBoundary} (Theorem 1.6, \cite{LiQingShi}) For any $\epsilon >0$, there exists $\delta > 0$, for any conformally compact Einstein manifold $(M^{n+1}, g)$ ($n\geq 3$),
one gets
\begin{equation}\label{close-to-hyper}
|K [g] + 1| \leq \epsilon,
\end{equation}
for all sectional curvature $K$ of $g$, provided that
$$
Y(\partial M, [\hat{g}]) \geq (1-\delta) Y(\mathbb{S}^n, [g_{\mathbb{S}}]).
$$
Particularly, any conformally compact Einstein manifold with its conformal infinity of Yamabe constant sufficiently close to that of the round sphere is
necessarily negatively curved, and by \cite{WY} (see also \cite{Cai-Galloway}) if $\partial M$ is simply connected then $(M, g)$ is simply connected.
\end{thm}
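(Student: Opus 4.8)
The plan is to deduce the curvature pinching from a uniform almost-maximal volume estimate, by combining the Bishop--Gromov comparison for the Einstein metric with a lower bound on the asymptotic volume growth in terms of the Yamabe constant of the conformal infinity, and then running an almost-rigidity argument toward hyperbolic space. Since $(M^{n+1},g)$ satisfies $Ric_g=-ng$, its Ricci curvature equals that of the model space $\mathbb{H}^{n+1}$ of constant curvature $-1$. Hence for every $p\in M$ the function $r\mapsto V_g(B_r(p))/V_{\mathbb{H}^{n+1}}(B_r)$ is non-increasing; as it tends to $1$ when $r\to 0$, it stays in $(0,1]$ and decreases to a limit $\theta$ as $r\to\infty$. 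Thus at every center $p$ and every scale $r$ the ratio is squeezed into $[\theta,1]$, and equality at some $r$ would force $B_r(p)$ to be a hyperbolic ball.

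The analytic heart of the matter is the lower bound $\theta\ge\Phi\!\left(Y(\partial M,[\hat g])\right)$ for a continuous function $\Phi$ with $\Phi\!\left(Y(\mathbb{S}^n,[g_{\mathbb{S}}])\right)=1$; this is the advertised control of the relative volume growth of geodesic balls by the Yamabe constant at infinity. I would obtain it by tying the exponential growth coefficient of $V_g(B_r)$ to the volume of $(\partial M,\hat g)$ in its Yamabe representative and invoking the Yamabe/Sobolev inequality on $(M,g)$, whose constant approaches the sharp sphere value exactly as $Y(\partial M,[\hat g])\to Y(\mathbb{S}^n,[g_{\mathbb{S}}])$. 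Granting this, the hypothesis $Y(\partial M,[\hat g])\ge(1-\delta)Y(\mathbb{S}^n,[g_{\mathbb{S}}])$ yields $\theta\ge 1-\delta'$ with $\delta'=\delta'(\delta)\to 0$, so that $V_g(B_r(p))/V_{\mathbb{H}^{n+1}}(B_r)\in[1-\delta',1]$ uniformly in $p$ and $r$.

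The pinching then follows by contradiction. If it failed there would be $\eps_0>0$ and CCE manifolds $(M_i^{n+1},g_i)$ with $Y(\partial M_i,[\hat g_i])\to Y(\mathbb{S}^n,[g_{\mathbb{S}}])$ carrying points $p_i$ and tangent $2$-planes with $|K[g_i]+1|>\eps_0$. By the previous step the volume ratios based at $p_i$ tend to $1$ at all scales, so the sequence is non-collapsed; using the Einstein equation in a harmonic or geodesic gauge to produce uniform interior elliptic and $\eps$-regularity estimates, I would extract a pointed $C^\infty$-limit $(M_\infty,g_\infty,p_\infty)$ whose volume ratio is identically $1$. The equality case of Bishop--Gromov then forces $g_\infty$ to have constant sectional curvature $-1$, i.e.\ $(M_\infty,g_\infty)=\mathbb{H}^{n+1}$, in agreement with the rigidity of the round sphere as conformal infinity (\cite{Q}\cite{DJ}\cite{LiQingShi}). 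Smooth convergence passes the bad plane to the limit, giving $|K[g_\infty]+1|\ge\eps_0$ at $p_\infty$ and the desired contradiction. Because the volume bound holds uniformly at every center, points $p_i$ escaping toward the boundary cause no separate difficulty.

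The main obstacle is the sharp volume bound of the middle step: making $\theta\ge\Phi(Y(\partial M,[\hat g]))$ quantitative and forcing $\Phi=1$ precisely at the round sphere requires careful control of the boundary term in the asymptotics \eqref{equn_splittingmetric01} and of the degeneration of the Sobolev constant. The secondary difficulty is the compactness step, where one must upgrade the uniform volume closeness to $C^\infty$ convergence through $\eps$-regularity for the Einstein equation, the non-collapsing being supplied by the very volume lower bound established above.
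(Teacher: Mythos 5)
This statement is not proved in the paper at all: it is imported verbatim as Theorem 1.6 of \cite{LiQingShi}, and the only trace of its proof here is the sentence preceding it (``based on the control of the relative volume growth of geodesic balls by the Yamabe constant at the conformal infinity'') together with the inequality
\begin{align*}
\Big(\frac{Y(\mathbb{S}^n,[\hat{g}])}{Y(\mathbb{S}^n,[g^{\mathbb{S}^n}])}\Big)^{\frac{n}{2}}\;\leq\; K(0)\;=\;\lim_{r\to+\infty}\frac{\det(g_r)}{\det\big(g_r^{\mathbb{H}^{n+1}}(r)\big)}
\end{align*}
recorded at the start of Section 3. Measured against the cited argument, your architecture is the right one: Bishop--Gromov monotonicity for $Ric_g=-ng$, a lower bound on the asymptotic volume ratio that tends to $1$ as $Y(\partial M,[\hat g])\to Y(\mathbb{S}^n,[g_{\mathbb{S}}])$, and then a compactness/$\epsilon$-regularity contradiction in which the limit has volume ratio identically $1$ and is forced by Bishop--Gromov rigidity to be $\mathbb{H}^{n+1}$. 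This is indeed how the pinching is obtained in \cite{LiQingShi}.

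There are, however, two genuine gaps. First, the step you yourself call the ``analytic heart'' --- the bound $\theta\ge\big(Y(\partial M,[\hat g])/Y(\mathbb{S}^n,[g_{\mathbb{S}}])\big)^{n/2}$ --- is precisely the substantive content of the cited work, and your plan for obtaining it is both deferred and misdirected: the Yamabe/Sobolev inequality that enters is the one on the \emph{compact boundary} $(\partial M,[\hat g])$, applied to the particular representative singled out by the geodesic normalization at the center $p$ (it is the volume of this representative that computes the asymptotic ratio, via the radial Jacobian comparison displayed above), together with control of its scalar curvature from the Einstein and Gauss equations; invoking a Sobolev inequality ``on $(M,g)$'' with a degenerating constant is not a substitute, and no quantitative mechanism is given. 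Without this step the pinching estimate is only reduced to, not derived from, the hypothesis on $Y(\partial M,[\hat g])$. Second, the final clause of the statement --- that $(M,g)$ is simply connected --- is never addressed, and it does not follow from the pinching: negative curvature identifies the universal cover via Cartan--Hadamard but says nothing about $\pi_1(M)$. A separate argument is required, e.g.\ a covering-space/volume-ratio count, or the observation that the deck group acts on the sphere at infinity of the pinched Cartan--Hadamard cover with empty limit set (since the conformal infinity is all of $\mathbb{S}^n$), hence is finite and, being torsion free, trivial. As written, your proposal faithfully reconstructs the outer shell of the proof in \cite{LiQingShi} but supplies neither the volume estimate nor the topological conclusion.
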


It is proved in \cite{Graham} that for any smooth metric $h\in[\hat{g}]$ at the conformal infinity, there exists a unique geodesic defining function $x$ about $h$ in a neighborhood of $\partial M$. For a CCE metric of $C^2$, in \cite{CDLS} the authors showed that the following regularity result holds.

\begin{thm}\label{thm_expansion1}
Assume $\overline{M}$ is a smooth compact manifold of dimension $n+1$, $n\geq 3$, with $M$ its interior and $\partial M$ its boundary. If $g$ is a conformally compact Einstein metric of class $C^2$ on $M$ with conformal infinity $(\partial M, [\gamma])$, and $\hat{g}\in [\gamma]$ is a smooth metric on $\partial M$. Then there exists  a smooth coordinates cover of $\overline{M}$ and a smooth geodesic defining $x$ corresponding to $\hat{g}$. Under this smooth coordinates cover, the conformal compactification $\bar{g}=x^2g$ is smooth up to the boundary for $n$ odd and has the expansion
\begin{align}\label{equn_expansion1}
\bar{g}=\,dx^2+g_x=\, dx^2+\hat{g}+x^2g^{(2)}+\,(\text{even powers})\,+x^{n-1} g^{(n-1)}+ x^{n}g^{(n)}+...
\end{align}
with $g^{(k)}$ smooth symmetric $(0,2)$-tensors on $\partial M$ such that for $2k<n$, $g^{(2k)}$ can be calculated explicitly inductively using the Einstein equations and $g^{(n)}$ is a smooth trace-free nonlocal term; while for $n$ even,  $\bar{g}$ is of class $C^{n-1}$, and more precisely it is polyhomogeneous and has the expansion
\begin{align}\label{equn_expansion2}
\bar{g}=\,dx^2+g_x=\, dx^2+\hat{g}+x^2g^{(2)}+\,(\text{even powers})\,+x^n\log(x) \tilde{g}+ x^{n}g^{(n)}+...
\end{align}
with $\tilde{g}$ and $g^{(k)}$ smooth symmetric $(0,2)$-tensors on $\partial M$, such that for $2k<n$, $g^{(2k)}$ and $\tilde{g}$ can be calculated explicitly inductively using the Einstein equations, $\tilde{g}$ is trace-free and $g^{(n)}$ is a smooth nonlocal term with its trace locally determined.
\end{thm}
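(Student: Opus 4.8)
The plan is to establish the expansion by combining the Fefferman--Graham normal form for Einstein metrics (see \cite{FG}) with a boundary regularity/polyhomogeneity argument, following the strategy of \cite{CDLS}. First I would fix the smooth representative $\hat{g}\in[\gamma]$ and invoke \cite{Graham} to produce the geodesic defining function $x$ associated to $\hat{g}$, characterized by $|dx|_{\bar{g}}=1$ near $\partial M$ with $\bar{g}|_{\partial M}=\hat{g}$. Solving this eikonal equation along the normal geodesics places the compactification in the normal form $\bar{g}=dx^2+g_x$, where $g_x$ is a one-parameter family of metrics on $\partial M$ with $g_0=\hat{g}$. The only subtlety at this stage is that $\bar{g}$ is a priori merely $C^2$, so $x$ and $g_x$ inherit finite regularity; the full smoothness of the coordinate cover and of $x$ is itself part of the conclusion and is recovered only in the last step.

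Next I would rewrite the Einstein condition $\mathrm{Ric}_g=-n g$ in this gauge. After the conformal change $g=x^{-2}\bar{g}$, it becomes a second-order evolution equation in $x$ for the family $g_x$, schematically of the form
\begin{align*}
x\,\partial_x^2 g_x-(n-1)\,\partial_x g_x-x\,\mathrm{tr}(g_x^{-1}\partial_x g_x)\,\partial_x g_x+2x\,\mathrm{Ric}(g_x)+\text{(lower order)}=0,
\end{align*}
together with the Gauss and Codazzi constraints arising from the $dx^2$ and mixed components of the Ricci tensor. The leading operator $x\partial_x^2-(n-1)\partial_x$ has indicial polynomial $s(s-n)$, hence indicial roots $0$ and $n$. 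Expanding $g_x=\hat{g}+\sum_{k\geq1}x^{k}g^{(k)}$ and matching orders, the recursion determines every coefficient $g^{(k)}$ with $k<n$ algebraically in terms of $\hat{g}$ and its curvature, while the parity of the equation forces the odd coefficients below order $n$ to vanish; this yields the even expansion up to $x^{n-1}$.

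The critical order is $k=n$, where the indicial root is hit. When $n$ is odd, $x^n$ is the first odd power; the coefficient $g^{(n)}$ is left undetermined by the recursion (the nonlocal Cauchy data), and the trace and divergence constraints of the previous step force it to be trace-free. When $n$ is even, the resonance between the indicial root $n$ and the integer power $x^n$ produces a logarithmic term $x^n\log(x)\,\tilde{g}$; its coefficient $\tilde{g}$ is the (locally determined, trace-free) Fefferman--Graham obstruction tensor, while $g^{(n)}$ remains nonlocal with its trace locally determined by the constraints. This is exactly the dichotomy recorded in \eqref{equn_expansion1} and \eqref{equn_expansion2}.

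The hard part, and the main obstacle, is that everything above is only a formal indicial computation: one must show that an \emph{actual} $C^2$ solution genuinely carries this expansion with the sharp regularity ($C^\infty$ for $n$ odd, $C^{n-1}$ and polyhomogeneous for $n$ even). To close this, I would linearize the Einstein operator in the chosen gauge and observe that it is a $0$-elliptic (edge) operator in the sense of Mazzeo, whose normal operator has precisely the indicial roots $0$ and $n$ computed above; a parametrix construction together with a boundary regularity theorem then upgrades conormality to the claimed polyhomogeneous expansion, with $\log$ terms forced to appear exactly at the resonant integer orders. The delicate point is that the bootstrap must be started from only $C^2$ data, so controlling the nonlinear terms in the iteration and verifying that the geodesic gauge persists throughout are the genuinely technical ingredients; this is precisely the analysis carried out in \cite{CDLS}, whose a priori estimates I would follow to complete the argument.
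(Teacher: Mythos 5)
The paper offers no proof of this statement: it is quoted as a known background result, with the geodesic defining function supplied by \cite{Graham} and the expansion and $C^2$-to-polyhomogeneous regularity supplied by \cite{CDLS}. Your outline --- Fefferman--Graham normal form, indicial roots $0$ and $n$ of $x\partial_x^2-(n-1)\partial_x$, parity of the coefficients below order $n$, the trace-free nonlocal $g^{(n)}$, the logarithmic obstruction tensor at the resonance when $n$ is even, and deferral of the hard $C^2$-to-smooth/polyhomogeneous bootstrap to the gauge-fixed elliptic theory of \cite{CDLS} --- correctly reconstructs the argument of exactly those references, so your route coincides with the one the paper relies on.
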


\section{A priori estimates and an existence result on solutions to the boundary value problem $(\ref{equn_SpnEinstein01})-(\ref{equn_SpnBV01})$}\label{section4}

In this section, for a given $\text{Sp}(k+1)$ invariant metric $\hat{g}$ on $\mathbb{S}^n$ with $n=4k+3$ $(k\geq1)$, we give an a priori estimate on the 
non-positively curved conformally compact Einstein metric with $(\mathbb{S}^n,[\hat{g}])$ as its conformal infinity. In particular, we give an estimate on the solution to the boundary value problem $(\ref{equn_SpnEinstein01})-(\ref{equn_SpnBV01})$.

By the volume comparison theorem,
\begin{align*}
K(0)=\lim_{x\to 0}\frac{\text{det}(\bar{h})}{\text{det}(\bar{h}^{\mathbb{H}^{n+1}}(x))}=\lim_{r\to +\infty}\frac{\text{det}(g_r)}{\text{det}(g_r^{\mathbb{H}^{n+1}}(r))}<1,
\end{align*}
where
\begin{align*}
g^{\mathbb{H}^{n+1}}=dr^2+g_r^{\mathbb{H}^{n+1}}(r)=x^{-2}(dx^2+\frac{(1-x^2)^2}{4}\bar{h}^{\mathbb{H}^{n+1}})
\end{align*}
is the hyperbolic metric. Moreover it is proved in \cite{LiQingShi} (see also \cite{Jin}) that
\begin{align*}
(\frac{Y(\mathbb{S}^n,[\hat{g}])}{Y(\mathbb{S}^n,[g^{\mathbb{S}^n}])})^{\frac{n}{2}}\leq K(0)=\lim_{r\to +\infty}\frac{\text{det}(g_r)}{\text{det}(g_r^{\mathbb{H}^{n+1}}(r))},
\end{align*}
where $Y(\mathbb{S}^n,[\hat{g}])$ is the Yamabe constant of $(\mathbb{S}^n,[\hat{g}])$ and $g^{\mathbb{S}^n}$ is the round sphere metric.

\begin{lem}\label{lem_monotonicitySpn01}
For the initial data $t_1(0), t_2(0),t_3(0)> 0$ different from one another, we have $y_1'(x)>0$ for $x\in(0,1)$. Moreover, if we assume that
 \begin{align}\label{ineqn_condition123}
 t_i(0)^{-1}(t_j(0)+t_k(0))>1,
 \end{align}
 for any $\{i,j,k\}=\{1, 2,3\}$, then $y_2'-y_3'$, $y_2'-y_4'$ and $y_3'-y_4'$ have no zeroes on $x\in(0,1)$. That is to say, $K$, $\frac{t_i}{t_j}$ $(i\neq j)$ are monotonic on $x\in(0,1)$.
\end{lem}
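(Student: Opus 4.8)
The plan is to prove the two assertions separately, in each case using a maximum-principle philosophy adapted to the singular endpoints $x=0,1$ and to the specific algebraic structure of the nonlinear terms.

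For $y_1'>0$, I would rewrite (\ref{equn_SpnEinstein01}) as $y_1''-P(x)y_1'=-\tfrac{1}{2n^2}S$, where $P(x)=x^{-1}(1+3x^2)(1-x^2)^{-1}>0$ on $(0,1)$ and $S$ is the bracketed expression, which is a sum of squares and hence $S\geq 0$. A partial-fraction computation gives the integrating factor $\mu(x)=(1-x^2)^2/x$, for which $(\mu y_1')'=\mu(y_1''-Py_1')\leq 0$, so $\mu y_1'$ is nonincreasing on $(0,1)$. Since $\mu\to 0$ and $y_1'\to 0$ as $x\to 1$ (using $y_1'(1)=0$ from (\ref{equn_SpnBV01})), the product tends to $0$ at the right endpoint, and monotonicity forces $\mu y_1'\geq 0$, i.e. $y_1'\geq 0$. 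For strictness, an interior zero $y_1'(x_0)=0$ would force $\mu y_1'\equiv 0$, hence $y_1'\equiv 0$, on $[x_0,1]$; feeding this back into (\ref{equn_SpnEinstein01}) makes $S\equiv 0$ there, and since the quadratic form in $(y_2',y_3',y_4')$ is governed by $nI-J$ (with $J$ the all-ones matrix), whose eigenvalues $n-3,n,n$ are positive, we get $y_2'=y_3'=y_4'=0$ on $[x_0,1]$. All $y_m$ are then constant, equal to their vanishing boundary values at $x=1$, and ODE uniqueness propagated back toward $x=0$ would give $t_i(0)\equiv 1$, contradicting the distinctness of the $t_i(0)$.

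For the differences I would subtract the evolution equations pairwise. Setting $w=y_2-y_3=\log(t_1/t_2)$ and forming (\ref{equn_SpnEinstein03})$-$(\ref{equn_SpnEinstein04}), the difference of the two nonlinear brackets factors as $(t_1-t_2)\big[(n-3)+4(t_1+t_2-t_3)/(t_1t_2t_3)\big]$, so $w$ solves $w''-Aw'-c\,w=0$ with $A=Q-\tfrac12 y_1'$, $Q=x^{-1}(n-1+(n+1)x^2)(1-x^2)^{-1}$, and $c=8(1-x^2)^{-2}(K^{-1}t_1t_2t_3)^{1/n}\cdot\tfrac{t_1-t_2}{w}\cdot\big[(n-3)+4(t_1+t_2-t_3)/(t_1t_2t_3)\big]$. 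Because $(t_1-t_2)/w>0$ always and $n-3=4k>0$, the coefficient $c$ is positive exactly when the bracket is positive, which holds whenever $t_1+t_2-t_3>0$; condition (\ref{ineqn_condition123}) is precisely the three such triangle-type inequalities at $x=0$, and they also hold trivially at $x=1$ since $t_i(1)=1$. The analogous subtractions for $y_2-y_4$ and $y_3-y_4$ produce the same structure, with $c>0$ under the corresponding inequalities. Granting $c>0$ on all of $(0,1)$, the monotonicity is then clean: the zeroth-order coefficient $-c$ is negative, so $w$ admits no negative interior minimum and no positive interior maximum, which together with $w(1)=0$, $w(0)\neq 0$ makes $w$ of one sign on $[0,1)$; at any interior critical point $w''=cw$ has the sign of $w$, so on $\{w>0\}$ every critical point is a strict local minimum and at most one can occur. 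Evaluating the equation at $x=0$ (where $w'(0)=0$ and $Q\sim (n-1)/x$) gives $(2-n)w''(0)=c(0)w(0)$, so $w$ starts strictly monotone; combined with $w'(1)=0$, $w(1)=0$ this rules out any interior zero of $w'$, and hence $K$ and each $t_i/t_j$ are strictly monotone.

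The main obstacle is justifying $c>0$ throughout $(0,1)$, that is, propagating the triangle inequalities off the two endpoints. I would run a continuity argument on the maximal interval where all three inequalities hold: there the argument above makes each ratio $t_i/t_j$ strictly monotone, and monotonicity in turn preserves the binding inequality, since if $t_1$ is the largest then $t_1/t_2$ decreases while $1+t_3/t_2$ increases, so $t_1<t_2+t_3$ cannot first fail in the interior, and the other two inequalities hold automatically under the induced ordering. The delicate point is that the monotonicity argument draws on the data at $x=1$, so the continuity and the maximum principle cannot simply be applied in sequence but must be \emph{interlocked}; making this bootstrap airtight—in particular ensuring that the ordering of the $t_i$ does not degenerate before the triangle inequalities are secured—is where the real work lies.
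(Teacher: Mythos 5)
Your treatment of $y_1'>0$ is correct and is essentially the paper's own proof: the same integrating factor $x^{-1}(1-x^2)^2$ puts (\ref{equn_SpnEinstein01}) in divergence form, the weight annihilates the boundary contribution at $x=1$, and positive definiteness of the quadratic form forces $y_1'\equiv 0$ near $x=1$, hence everywhere by analyticity/uniqueness; whether one then contradicts $y_1(0)<y_1(1)$ (the paper, via $K(0)<1$) or the distinctness of the $t_i(0)$ via backward ODE uniqueness (you) is immaterial.

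The second half, however, has a genuine gap, which you have correctly located but not closed. Your maximum-principle argument for $w=y_2-y_3$ (and its analogues) requires the zeroth-order coefficient $-c$ to be negative, i.e. the bracket $(n-3)+4(t_1+t_2-t_3)/(t_1t_2t_3)>0$, on \emph{all} of $(0,1)$, because every sign conclusion you draw (no negative interior minimum, no positive interior maximum, hence monotonicity) is anchored at the right-endpoint data $w(1)=0$, $w'(1)=0$. Consequently the bootstrap you sketch is circular: on the maximal subinterval $[0,x^\ast)$ where the triangle-type inequalities are known to hold you have no control of $w$, $w'$ at $x^\ast$, so the maximum principle cannot be run there; and your claim that ``$t_1<t_2+t_3$ cannot first fail in the interior'' invokes the monotonicity of the ratios $t_i/t_j$, which is exactly the conclusion being sought. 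Moreover the ordering of the $t_i$ may a priori change in the interior. This is not a removable technicality; it is the entire difficulty of the lemma, which is precisely that the $t_i$ themselves need not be monotone.

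For comparison, the paper never propagates the triangle inequalities into the interior. It argues on the largest zero: assuming the differences have zeroes and (say) $y_2'-y_3'$ attains the largest one, $x_{23}$, it multiplies (\ref{equn_SpnEinstein03}) and (\ref{equn_SpnEinstein04}) by $x^{1-n}(1-x^2)^n$ and integrates the resulting divergence identity over $(x_{23},1)$. Since $y_1'>0$ and $(y_i'-y_j')(t_{i-1}-t_{j-1})<0$ to the right of the largest zero, the identity forces the existence of a point $\bar x$ where the bracket is \emph{negative}, i.e. $t_3(\bar x)>t_1(\bar x)+t_2(\bar x)$; a subsidiary largest-zero argument shows $y_2'-y_4'$ and $y_3'-y_4'$ then have no zeroes at all on $(0,1)$, so $t_1/t_3$ and $t_2/t_3$ are globally monotone and hence pinched between their values at $x=0$ and $x=1$; combined with (\ref{ineqn_condition123}) --- which is thus used only at the endpoint $x=0$ --- this yields $t_1(\bar x)+t_2(\bar x)>t_3(\bar x)$, a contradiction. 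Replacing your pointwise positivity of $c$ by this kind of integral, largest-zero information is what would be needed to complete your route.
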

\begin{proof}
The proof is a modification of Lemma 3.1 in \cite{Li2}. Notice that $y_i$ is analytic on $x\in(0,1)$ and the zeroes of $y_i'$ ($1\leq i \leq 4$) are discrete on $x\in[0,1]$. Assume that there exists a zero of $y_1'$ on $x\in(0,1)$. Let $x_1$ be the largest zero of $y_1'$ on $x\in(0,1)$. Multiplying $x^{-1}(1-x^2)^2$ on both sides of $(\ref{equn_SpnEinstein01})$, we have
\begin{align}
&\label{equn_singularintSpn21}(x^{-1}(1-x^2)^2y_1')'+\frac{1}{2n^2}x^{-1}(1-x^2)^2[n(y_1')^2\,+\,((n-1)y_2'-y_3'-y_4')^2\\
&+\,(-y_2'+(n-1)y_3'-y_4')^2\,+\,(-y_2'-y_3'+(n-1)y_4')^2+(n-3)(y_2'+y_3'+y_4')^2]=0.\notag
\end{align}
By integrating the equation on $x\in[x_1,1]$, we have
\begin{align*}
&\frac{1}{2n^2}\int_{x_1}^1x^{-1}(1-x^2)^2[n(y_1')^2\,+\,((n-1)y_2'-y_3'-y_4')^2+\,(-y_2'+(n-1)y_3'-y_4')^2\\
&+\,(-y_2'-y_3'+(n-1)y_4')^2+(n-3)(y_2'+y_3'+y_4')^2]dx=0.
\end{align*}
Therefore, $y_1'=0$ on $x\in[x_1,1]$. Since $y_1$ is analytic, $y_1'=0$ for $x\in[0,1]$, contradicting with the fact $y_1(0)<y_1(1)$. Therefore, there is no zero of $y_1'$ on $x\in(0,1)$. Therefore, $y_1'>0$ for $x\in (0,1)$.

We multiply $x^{1-n}(1-x^2)^n$ on both sides of the equations $(\ref{equn_SpnEinstein03})$ and $(\ref{equn_SpnEinstein04})$, and take difference of the two equations obtained to have
\begin{align}\label{eqn_SinglmSpn23}
&\big(x^{1-n}(1-x^2)^n(y_2'-y_3')\big)'\,+\frac{1}{2}x^{1-n}(1-x^2)^ny_1'(y_2'-y_3')\\
&- 8 x^{1-n}(1-x^2)^{n-2} (K^{-1}t_1t_2t_3)^{\frac{1}{n}}(t_1-t_2)[(n-3)+\frac{4(t_1+t_2-t_3)}{t_1t_2t_3}]=0.\notag
\end{align}
Assume that $y_2'-y_3'$ has a zero on $x\in(0,1)$ and assume $x_{23}$ is the largest zero of $y_2'-y_3'$ on $(0,1)$. We integrate $(\ref{eqn_SinglmSpn23})$ on $x\in(x_{23}, 1)$ to have
\begin{align}
&\label{eqn_intrightSpn02}\frac{1}{2}\int_{x_{23}}^1 x^{1-n}(1-x^2)^ny_1'(y_2'-y_3') dx \\
- &8\int_{x_{23}}^1 x^{1-n}(1-x^2)^{n-2} (K^{-1}t_1t_2t_3)^{\frac{1}{n}}(t_1-t_2)[(n-3)+\frac{4(t_1+t_2-t_3)}{t_1t_2t_3}] dx=0.\notag
\end{align}
Similarly, if we assume that $x_{24}$ and $x_{34}$ are the largest zeroes of $y_2'-y_4'$ and $y_3'-y_4'$ on $(0,1)$, then we have
\begin{align}
&\label{eqn_intrightSpn03}\frac{1}{2}\int_{x_{24}}^1 x^{1-n}(1-x^2)^ny_1'(y_2'-y_4') dx\\
- &8\int_{x_{24}}^1 x^{1-n}(1-x^2)^{n-2} (K^{-1}t_1t_2t_3)^{\frac{1}{n}}(t_1-t_3)[(n-3)+\frac{4(t_1+t_3-t_2)}{t_1t_2t_3}] dx=0,\notag\\
&\label{eqn_intrightSpn04}\frac{1}{2}\int_{x_{34}}^1 x^{1-n}(1-x^2)^ny_1'(y_3'-y_4') dx \\
- &8\int_{x_{34}}^1 x^{1-n}(1-x^2)^{n-2} (K^{-1}t_1t_2t_3)^{\frac{1}{n}}(t_2-t_3)[(n-3)+\frac{4(t_2+t_3-t_1)}{t_1t_2t_3}] dx=0.\notag
\end{align}
We assume that $y_2'-y_3'$ achieves the largest zero $x_{23}\in (0,1)$ in $\{y_2'-y_3',y_2'-y_4',\,y_3'-y_4'\}$. Notice that $y_i'(1)=0$ and $y_i(1)=0$ for $i=2,3,4$. We have that $(y_i'-y_j')(t_{i-1}-t_{j-1})<0$ on $x\in(x_{23},1)$ for $2\leq i < j \leq 4$. By $(\ref{eqn_intrightSpn02})$, there exists a point $\bar{x}\in(x_{23},1)$ such that
\begin{align}\label{ineqnrightSpn23}
(n-3)+\frac{4(t_1(\bar{x})+t_2(\bar{x})-t_3(\bar{x}))}{t_1(\bar{x})t_2(\bar{x})t_3(\bar{x})}<0,
\end{align}
and therefore,
\begin{align}\label{ineqnrightSpn23-1}
\frac{t_1}{t_3}<1,\,\,\frac{t_2}{t_3}<1,
\end{align}
on the interval $x\in(x_{23}, 1)$. We {\bf claim} that there is no zero of $y_2'-y_4'$ and $y_3'-y_4'$ on $x\in (0,1)$. If that is not the case, assume $y_2'-y_4'$  achieves the largest zero $x_{24} \in (0,x_{23}]$ in $\{y_2'-y_4',y_3'-y_4'\}$. By $(\ref{eqn_intrightSpn03})$, there exists a point $\tilde{x}\in(x_{24},1)$ such that
\begin{align}\label{ineqnrightSpn24}
(n-3)+\frac{4(t_1(\tilde{x})+t_3(\tilde{x})-t_2(\tilde{x}))}{t_1(\tilde{x})t_2(\tilde{x})t_3(\tilde{x})}<0,
\end{align}
and since $y_3'-y_4'$ keeps the sign on $(x_{24}, 1)$, we have
\begin{align}\label{ineqnrightSpn24-1}
\frac{t_3}{t_2}<1,
\end{align}
on the interval $x\in(x_{24}, 1)$, contradicting with $(\ref{ineqnrightSpn23-1})$. Otherwise, assume $y_3'-y_4'$  achieves the largest zero $x_{34} \in (0,x_{23}]$ in $\{y_2'-y_4',y_3'-y_4'\}$. Then by $(\ref{eqn_intrightSpn04})$, similar argument leads to a contradiction with $(\ref{ineqnrightSpn23-1})$. That proves the {\bf claim}. Therefore,
\begin{align*}
\min\{\frac{t_i(0)}{t_3(0)}, 1\}<\frac{t_i(x)}{t_3(x)}< \max\{\frac{t_i(0)}{t_3(0)}, 1\},
\end{align*}
on $x\in(0,1)$, for $i=1,2$. This contradicts with $(\ref{ineqnrightSpn23})$. Therefore, $y_2'-y_3'$ could not achieve the largest zero on $x\in (0,1)$ among $\{y_2'-y_3',y_2'-y_4',\,y_3'-y_4'\}$.

Similarly, neither $y_2'-y_4'$ nor $y_3'-y_4'$ could achieve the largest zero on $x\in (0,1)$ among $\{y_2'-y_3',y_2'-y_4',\,y_3'-y_4'\}$. Therefore, the functions $y_2'-y_3',y_2'-y_4',\,$ and $y_3'-y_4'$ have no zeroes on $x\in(0,1)$.

\end{proof}

By $(\ref{equn_SpnEinstein06})$ and the initial value condition, for $x>0$ small, we have
\begin{align}\label{equn_Spn3y1lowerorder1}
y_1'=x^{-1}(1-x^2)^{-1}[2n(1+x^2)-\sqrt{4n^2(1+x^2)^2+\frac{1}{n(n-1)}x^2(1-x^2)^2\Psi(x)-\frac{16n}{n-1}x^2\Upsilon(x)}\,\,],
\end{align}
where
\begin{align}\label{eqn_SpnPsi}
\Psi(x)\,=&\,\,\,\,\,((n-1)y_2'-y_3'-y_4')^2+(-y_2'+(n-1)y_3'-y_4')^2\\
&+(-y_2'-y_3'+(n-1)y_4')^2+(n-3)(y_2'+y_3'+y_4')^2,\notag
\end{align}
and
\begin{align}\label{equn_SpnUpsilon}
\Upsilon(x)&=\,n(n-1)-(K^{-1}t_1t_2t_3)^{\frac{1}{n}}\,[\,(n-3)(n+5)-(n-3)(t_1+t_2+t_3)\\
&+\frac{2(2t_1t_2+2t_1t_3+2t_2t_3-t_1^2-t_2^2-t_3^2)}{t_1t_2t_3}\,].\notag
\end{align}
 Since $y_1'>0$ for $x\in(0,1)$, it is clear that
\begin{align*}
\Upsilon(x)>\frac{(1-x^2)^2}{16n^2}\Psi(x)\,\geq\, 0,
\end{align*}
and hence
\begin{align}\label{inequn_boundaryYamabeconstantSpn3}
n(n-1)>n(n-1)K^{\frac{1}{n}}>&(t_1t_2t_3)^{\frac{1}{n}}\,[\,(n-3)(n+5)-(n-3)(t_1+t_2+t_3)\\
&+\frac{2(2t_1t_2+2t_1t_3+2t_2t_3-t_1^2-t_2^2-t_3^2)}{t_1t_2t_3}\,],\notag
\end{align}
for $ x>0$ small. By continuity, this gives a lower bound of $K(0)$ using the initial data $t_i(0)$.

\begin{Remark}\label{remarkmonotonicity}
Similar proof as in Lemma \ref{lem_monotonicitySpn01} shows that $y_1'>0$ on $x\in(0,1)$ for the problems $(\ref{equn_Spn1Einstein01})-(\ref{equn_Spn1BV01})$ and $(\ref{equn_Spn2Einstein01})-(\ref{equn_Spn2BV01})$, and $(y_2'-y_3')$ has no zero on $x\in(0,1)$ for $(\ref{equn_Spn2Einstein01})-(\ref{equn_Spn2BV01})$.
\end{Remark}
By Lemma \ref{lem_monotonicitySpn01} and Remark \ref{remarkmonotonicity}, we have the following upper bound estimate of $I_4$.
\begin{cor}\label{cor_Spnboundivd}
Let $\sigma\in(0,1)$ be some given constant. For the initial data $\sigma <t_1(0), t_2(0),t_3(0)<\frac{n+5}{3}$ (without loss of generality, assume $t_1(0)\geq t_2(0)\geq t_3(0)$), assume that the inequality $(\ref{ineqn_condition123})$ holds for any $\{i,j,k\}=\{1,2,3\}$, and also there exists a constant $\tau>0$ such that
\begin{align}\label{ineqn_condition123s}
2C_{13}C_{23}+2C_{13}+2C_{23}-C_{13}^2-C_{23}^2-1>\tau,
\end{align}
for all the constants $C_{13}$ and $C_{23}$ such that
\begin{align*}
&1\leq C_{13} \leq \frac{t_1(0)}{t_3(0)},\\
&1\leq C_{23} \leq \frac{t_2(0)}{t_3(0)}.
\end{align*}
For instance, the condition $(\ref{ineqn_condition123s})$ holds if $\frac{1}{2}<\frac{t_i(0)}{t_j(0)}<2$ for $1\leq i, j\leq 3$. Then there exists a  constant $\delta=\delta(\sigma,\tau)>0$ such that
\begin{align*}
&t_i(x)\geq \delta,\\
&I_4(x)\leq \delta^{-\frac{3}{n}}
\end{align*}
for $x\in[0,1]$ and $i=1,2,3$. Moreover, we have
\begin{align}\label{ineqn_Spnupperboundt123}
t_i(x)\leq \max\{1, t_1(0), \frac{n+5}{n-1+\frac{4t_3(0)}{t_1(0)}}\},
\end{align}
for $i=1,2,3$ and $x\in[0,1]$.
\end{cor}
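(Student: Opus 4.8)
The plan is to first upgrade the monotonicity of Lemma \ref{lem_monotonicitySpn01} to a pointwise ordering of the $t_i$ with controlled ratios, then derive the upper bound $(\ref{ineqn_Spnupperboundt123})$ by a maximum principle, and finally use the positivity $\Upsilon(x)>0$ together with the hypothesis $(\ref{ineqn_condition123s})$ to force a uniform positive lower bound on the $t_i$, from which the bound on $I_4$ is immediate.

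First I would record the geometry of the solution. Since $y_2=\log t_1$, $y_3=\log t_2$, $y_4=\log t_3$, the ratios $t_1/t_2$, $t_1/t_3$, $t_2/t_3$ are governed by $y_2'-y_3'$, $y_2'-y_4'$, $y_3'-y_4'$, which by Lemma \ref{lem_monotonicitySpn01} have no zero on $(0,1)$ under $(\ref{ineqn_condition123})$. Hence each ratio is strictly monotone; as $t_i(1)=1$ and $t_1(0)\geq t_2(0)\geq t_3(0)$, each $t_i/t_j$ $(i<j)$ decreases from a value $\geq 1$ to $1$. Therefore $t_1(x)\geq t_2(x)\geq t_3(x)$ on $[0,1]$ and
\begin{align*}
1\leq \frac{t_1(x)}{t_3(x)}\leq \frac{t_1(0)}{t_3(0)},\qquad 1\leq \frac{t_2(x)}{t_3(x)}\leq \frac{t_2(0)}{t_3(0)},
\end{align*}
so that $C_{13}=t_1/t_3$ and $C_{23}=t_2/t_3$ take values exactly in the set appearing in $(\ref{ineqn_condition123s})$.

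Next I would prove $(\ref{ineqn_Spnupperboundt123})$ by the maximum principle applied to $(\ref{equn_SpnEinstein03})$; since $t_1\geq t_2\geq t_3$ it suffices to bound $t_1$. If $t_1$ attains its maximum at an endpoint it is $\leq\max\{1,t_1(0)\}$. At an interior maximum $y_2'=0$ and $y_2''\leq 0$, so $(\ref{equn_SpnEinstein03})$ forces
\begin{align*}
(n-1)t_1+2t_2+2t_3-n-5+\frac{2\big(t_1^2-(t_2-t_3)^2\big)}{t_1t_2t_3}\leq 0.
\end{align*}
Here $t_1\geq t_2\geq t_3>0$ makes the last term nonnegative, while the ratio bounds give $t_2,t_3\geq t_1\,t_3(0)/t_1(0)$, hence $2t_2+2t_3\geq 4t_1\,t_3(0)/t_1(0)$; discarding the nonnegative fraction yields $\big(n-1+4t_3(0)/t_1(0)\big)t_1\leq n+5$, which is $(\ref{ineqn_Spnupperboundt123})$.

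Finally, for the lower bound I would use that $y_1'>0$ on $(0,1)$ forces $\Upsilon(x)>0$ there, together with $K\leq 1$ (as $K(0)<1$ and $K$ increases to $K(1)=1$). Writing
\begin{align*}
B=(n-3)\big[(n+5)-(t_1+t_2+t_3)\big]+\frac{2\big(2t_1t_2+2t_1t_3+2t_2t_3-t_1^2-t_2^2-t_3^2\big)}{t_1t_2t_3},
\end{align*}
the bound $(\ref{ineqn_Spnupperboundt123})$ and $n\geq 7$ give $t_i\leq\frac{n+5}{3}$, so the first bracket is nonnegative, and $(\ref{ineqn_condition123s})$ bounds the numerator of the second term below by $\tau t_3^2$; thus $B>0$, and $\Upsilon>0$ with $K\leq 1$ yields $n(n-1)>(t_1t_2t_3)^{1/n}B\geq 2\tau\,t_3^2\,(t_1t_2t_3)^{-(n-1)/n}$. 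Substituting $t_1t_2t_3=C_{13}C_{23}t_3^3$ this reads
\begin{align*}
n(n-1)>2\tau\,(C_{13}C_{23})^{-\frac{n-1}{n}}\,t_3^{\frac{3-n}{n}},
\end{align*}
and since $(3-n)/n<0$ and $C_{13}C_{23}\leq t_1(0)t_2(0)/t_3(0)^2$ is controlled by $\sigma<t_i(0)<\frac{n+5}{3}$, this is a uniform lower bound $t_3\geq\delta(\sigma,\tau)>0$, whence $t_i\geq t_3\geq\delta$. The identity $K=t_1t_2t_3\,I_4^n$ and $K\leq 1$ then give $I_4=(K/(t_1t_2t_3))^{1/n}\leq\delta^{-3/n}$. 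The main obstacle is this last step: one must prevent the negative term $-(n-3)(t_1+t_2+t_3)$ in $B$ from cancelling the term that blows up as $t_3\to 0$, which is exactly why $(\ref{ineqn_Spnupperboundt123})$ is established first so that $B\geq 2\tau t_3^2/(t_1t_2t_3)$; verifying that the monotonicity-produced range of $C_{13},C_{23}$ coincides with the one hypothesized in $(\ref{ineqn_condition123s})$ is the other point requiring care.
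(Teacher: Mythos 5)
Your opening steps match the paper's proof: the ratio monotonicity giving $t_1\geq t_2\geq t_3$ and $1\leq t_i/t_3\leq t_i(0)/t_3(0)$, the maximum-principle derivation of $(\ref{ineqn_Spnupperboundt123})$ at an interior maximum of $t_1$, and the blow-up contradiction as $t_3\to 0_+$ are all exactly the paper's argument. The genuine gap is the sentence ``I would use that $y_1'>0$ on $(0,1)$ forces $\Upsilon(x)>0$ there.'' That implication is not a pointwise consequence of the Einstein equations. Equation $(\ref{equn_SpnEinstein06})$ is a quadratic in $y_1'$; rewriting it as
\begin{align*}
\frac{16n}{(n-1)(1-x^2)^2}\,\Upsilon(x)\,=\,\frac{\Psi(x)}{n(n-1)}+\frac{4n(1+x^2)}{x(1-x^2)}\,y_1'-(y_1')^2,
\end{align*}
one sees that $\Upsilon(x)>0$ holds if and only if $y_1'(x)$ lies strictly below the larger root $r_+(x)\geq \frac{4n(1+x^2)}{x(1-x^2)}$ of the auxiliary quadratic $t^2-\frac{4n(1+x^2)}{x(1-x^2)}t-\frac{\Psi}{n(n-1)}$. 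Positivity of $y_1'$ gives no such upper bound: a solution sitting on the root of $(\ref{equn_SpnEinstein06})$ with the plus sign in front of the square root has $y_1'>0$ automatically while $\Upsilon$ may well be negative there. The representation $(\ref{equn_Spn3y1lowerorder1})$ (the minus root), and with it $(\ref{inequn_boundaryYamabeconstantSpn3})$, are established in the paper only for $x>0$ small, where the condition $y_1'(0)=0$ selects the branch; since this corollary is an a priori estimate, you may not assume any interior upper bound on $y_1'$ that would rule out the solution migrating to the other branch at some $x\in(0,1)$.

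The paper closes exactly this hole, and with ingredients you already have. Once $(\ref{ineqn_Spnupperboundt123})$ and the bound $2t_1t_2+2t_1t_3+2t_2t_3-t_1^2-t_2^2-t_3^2\geq\tau t_3^2$ are proved, your quantity $B$ is positive on $[0,1]$, hence $\Upsilon-n(n-1)=-(K^{-1}t_1t_2t_3)^{\frac{1}{n}}B<0$, and therefore the quantity under the square root satisfies
\begin{align*}
4n^2(1+x^2)^2+\tfrac{1}{n(n-1)}x^2(1-x^2)^2\Psi-\tfrac{16n}{n-1}x^2\Upsilon
\,=\,4n^2(1-x^2)^2+\tfrac{1}{n(n-1)}x^2(1-x^2)^2\Psi-\tfrac{16n}{n-1}x^2\big(\Upsilon-n(n-1)\big)\,>\,0
\end{align*}
on $(0,1)$. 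The two roots of the quadratic therefore never collide, so by continuity (analyticity of $y_i$) the solution stays on the minus branch throughout $(0,1)$; only then do $(\ref{equn_Spn3y1lowerorder1})$, and consequently $\Upsilon>0$ and $(\ref{inequn_boundaryYamabeconstantSpn3})$, hold on all of $(0,1)$, after which your blow-up argument and the bound $I_4\leq\delta^{-3/n}$ go through verbatim. So the proposal is repairable --- indeed the branch-persistence step uses precisely the positivity of $B$ you establish --- but as written, the inequality the whole lower bound rests on is asserted rather than proved.
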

\begin{proof}
We start with the upper bound estimate of $t_i$. By Lemma \ref{lem_monotonicitySpn01} and Remark \ref{remarkmonotonicity}, we have that
\begin{align}
&\label{ineqn_Spnmonotonicityt123-1}\frac{t_3(0)}{t_2(0)}\leq \frac{t_3(x)}{t_2(x)}\leq 1,\\
&\label{ineqn_Spnmonotonicityt123-2}\frac{t_2(0)}{t_1(0)}\leq \frac{t_2(x)}{t_1(x)}\leq 1,
\end{align}
for $x\in[0,1]$. Assume that $t_1$ achieves the maximum value at a point $x_0\in(0,1)$. Then by $(\ref{equn_SpnEinstein03})$, we have
\begin{align*}
y_{i+1}''(x_0)=8 (1-x^2)^{-2} (K^{-1}t_1t_2t_3)^{\frac{1}{n}}[(n-1)t_1+2t_2+\,2t_3-n-5+\frac{2(t_1^2-(t_2-t_3)^2)}{t_1t_2t_3}]\big|_{x=x_0}\leq0.
\end{align*}
By the monotonicity of $\frac{t_2}{t_1}$ and $\frac{t_3}{t_2}$, we have that $t_1^2(x_0)-(t_2(x_0)-t_3(x_0))^2\geq0$, and hence,
\begin{align*}
&(n-1+\frac{4t_3(0)}{t_1(0)})t_1(x_0)-n-5\\
\leq\,&[(n-1)t_1+2t_2+\,2t_3-n-5+\frac{2(t_1^2-(t_2-t_3)^2)}{t_1t_2t_3}]\big|_{x=x_0}\leq0.
\end{align*}
Thus by the condition $(\ref{equn_SpnBV01})$, the upper bound estimate $(\ref{ineqn_Spnupperboundt123})$ is established. Therefore,
\begin{align*}
n+5-t_1-t_2-t_3>0
\end{align*}
for $x\in[0,1]$. Now by the condition $(\ref{ineqn_condition123s})$ and the monotonicity of $\frac{t_i}{t_j}$, we have
\begin{align}\label{ineqn_boundquotientoft123s}
2t_1t_2+2t_1t_3+2t_2t_3-t_1^2-t_2^2-t_3^2\geq \tau\,t_3^2>0
\end{align}
for $x\in[0,1]$. Therefore,
\begin{align*}
&4n^2(1+x^2)^2+\frac{1}{n(n-1)}x^2(1-x^2)^2\Psi(x)-\frac{16n}{n-1}x^2\Upsilon(x)\\
=&4n^2(1-x^2)^2+\frac{1}{n(n-1)}x^2(1-x^2)^2\Psi(x)-\frac{16n}{n-1}x^2\big(\Upsilon(x)-n(n-1)\big)>0
\end{align*}
holds for $x\in(0,1)$, where the functions $\Psi$ and $\Upsilon$ are defined in $(\ref{eqn_SpnPsi})$ and $(\ref{equn_SpnUpsilon})$, and hence by analyticity of $y_i$, $(\ref{equn_Spn3y1lowerorder1})$ and $(\ref{inequn_boundaryYamabeconstantSpn3})$ hold on $x\in(0,1)$.

Now we turn to the lower bound estimate. We have shown that $(\ref{ineqn_boundquotientoft123s})$ holds for $x\in[0,1]$. 
On the other hand, by $(\ref{inequn_boundaryYamabeconstantSpn3})$ for  $x\in(0,1)$, we have
\begin{align}\label{inequn_boundaryYamabeconstantSpn3-1}
n(n-1)\,>\,(t_1t_2t_3)^{\frac{1}{n}}\,[&\,(n-3)(n+5)-(n-3)(t_1+t_2+t_3)\\
&+\frac{2(2t_1t_2+2t_1t_3+2t_2t_3-t_1^2-t_2^2-t_3^2)}{t_1t_2t_3}\,],\notag
\end{align}
for $ x\in(0,1)$. By $(\ref{ineqn_Spnmonotonicityt123-1})$, $(\ref{ineqn_Spnmonotonicityt123-2})$ and $(\ref{ineqn_boundquotientoft123s})$, if $t_3\to 0_{+}$, then the right hand side of $(\ref{inequn_boundaryYamabeconstantSpn3-1})$ goes to infinity, contradicting with $(\ref{inequn_boundaryYamabeconstantSpn3-1})$. Therefore, there exists $\delta>0$ such that
\begin{align*}
t_i(x)>\delta
\end{align*}
for $x\in[0,1]$ and $i=1,2,3$. By Lemma \ref{lem_monotonicitySpn01}, $K\leq 1$, and hence
\begin{align*}
I_4=(Kt_1^{-1}t_2^{-1}t_3^{-1})^{\frac{1}{n}}\leq \delta^{-\frac{3}{n}}
\end{align*}
for $x\in[0,1]$.

This proves the corollary.

\end{proof}

Under the assumption of the corollary, we have $(\ref{ineqn_boundquotientoft123s})$, and hence, by $(\ref{equn_Spn3y1lowerorder1})$ in  $x\in(0,1)$ which is proved in Corollary \ref{cor_Spnboundivd}, we have
\begin{align}\label{equn_Spny1lowerorder21}
y_1'(x)\leq x^{-1}(1-x^2)^{-1}[2n(1+x^2)-\sqrt{4n^2(1-x^2)^2}\,\,]\leq 4n x (1-x^2)^{-1},
\end{align}
for $x\in(0,1)$. Also, by $(\ref{inequn_boundaryYamabeconstantSpn3})$ for  $x\in(0,1)$, we have
\begin{align*}
K^{\frac{1}{n}}(x)\geq K^{\frac{1}{n}}(0)\geq\frac{1}{n(n-1)}(t_1t_2t_3)^{\frac{1}{n}}\,[\,(n-3)(n+5-t_1-t_2-t_3)+\frac{2\tau t_3}{t_1t_2}\,]\big|_{x=0}.
\end{align*}

Now we give an a priori estimate of the solution away from $x=1$.

\begin{lem}\label{lem_Spnuniformests301}
Under the condition in Corollary \ref{cor_Spnboundivd}, there exists a uniform constant $C=C(\sigma,\tau)>0$ independent of the solution and the initial data $t_i(0)$ such that
\begin{align}\label{ineqn_Spny1234leftbds}
|y_i^{(k)}(x)| \leq Cx^{2-k},
\end{align}
with $y_i^{(k)}$ the $k-$th order derivative of $x$, for $k=1,2$, $1\leq i \leq 4$ and $x\in[0,\frac{3}{4}]$. The control still holds on the interval $[0, 1-\epsilon]$ for any $\epsilon>0$ small with some constant $C=C(\sigma,\tau,\epsilon)>0$. 
\end{lem}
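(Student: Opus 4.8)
The plan is to first harvest the uniform two-sided bounds on $t_1,t_2,t_3$ and on $K$ furnished by Corollary \ref{cor_Spnboundivd}, which immediately make all four functions $y_i$ uniformly bounded in $C^0$ (with constant depending only on $\sigma,\tau$) and, crucially, make every source term appearing in (\ref{equn_SpnEinstein01})--(\ref{equn_SpnEinstein05}) uniformly bounded, since $(K^{-1}t_1t_2t_3)^{1/n}$ and the rational expressions in the $t_i$ are then controlled both above and away from zero. For $y_1$ the first-derivative estimate is already in hand: combining $y_1'>0$ from Lemma \ref{lem_monotonicitySpn01} with the upper bound (\ref{equn_Spny1lowerorder21}) gives $0<y_1'(x)\leq 4nx(1-x^2)^{-1}\leq Cx$ on $[0,\tfrac34]$, so $|y_1'|\leq Cx$. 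The remaining work is the corresponding bound $|y_i'|\leq Cx$ for $2\leq i\leq 4$ and then the uniform bound on all second derivatives.

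For the interior I would first record that on $[\tfrac14,\tfrac34]$ the coefficient $x^{-1}(n-1+(n+1)x^2)(1-x^2)^{-1}$ and all source terms are smooth and bounded, so the equations (\ref{equn_SpnEinstein01})--(\ref{equn_SpnEinstein05}) are non-degenerate there; a standard ODE bootstrap starting from the $C^0$ bound yields $|y_i^{(k)}|\leq C$ on $[\tfrac14,\tfrac34]$, in particular a uniform bound on the reference values $y_i'(\tfrac12)$. The left interval $(0,\tfrac12]$ is where the singular coefficient matters, and here I would exploit an integrating factor. Observing that $\tfrac12 y_1'=(\log K^{1/2})'$, the factor $\mu(x)=K^{1/2}x^{1-n}(1-x^2)^n$ converts the first-order part of (\ref{equn_SpnEinstein03})--(\ref{equn_SpnEinstein05}) into exact form, giving
\begin{align*}
\big(K^{1/2}x^{1-n}(1-x^2)^n y_i'\big)'=8\,K^{1/2}x^{1-n}(1-x^2)^{n-2}(K^{-1}t_1t_2t_3)^{1/n}\,[\cdots],
\end{align*}
whose right-hand side is bounded in absolute value by $Cx^{1-n}$ on $[0,\tfrac12]$. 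Integrating from $x$ to $\tfrac12$ (rather than from $0$, where the weight is singular) and using the bounded reference value $y_i'(\tfrac12)$ yields $|K^{1/2}x^{1-n}(1-x^2)^n y_i'(x)|\leq C+C\int_x^{1/2}s^{1-n}\,ds\leq Cx^{2-n}$, where the last step uses $n>2$ so that $x^{2-n}$ dominates. Dividing back by $x^{1-n}$ and invoking the uniform lower bound on $K^{1/2}$ gives $|y_i'(x)|\leq Cx$ on $(0,\tfrac12]$, and combining this with the interior bound on $[\tfrac12,\tfrac34]$ gives $|y_i'|\leq Cx$ throughout $[0,\tfrac34]$.

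With $|y_i'|\leq Cx$ and $|y_1'|\leq Cx$ established, the second-derivative bounds follow by substitution. Solving (\ref{equn_SpnEinstein03})--(\ref{equn_SpnEinstein05}) for $y_i''$, the potentially dangerous term $x^{-1}(n-1+(n+1)x^2)(1-x^2)^{-1}y_i'$ is bounded by $x^{-1}\cdot C\cdot Cx=C$, the coupling $\tfrac12 y_1'y_i'$ is $O(x^2)$, and the source is bounded, so $|y_i''|\leq C$ on $(0,\tfrac34]$; the same substitution into (\ref{equn_SpnEinstein01}) gives $|y_1''|\leq C$. Together these are exactly (\ref{ineqn_Spny1234leftbds}) for $k=1,2$. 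For the extension to $[0,1-\epsilon]$ one repeats the interior bootstrap on $[\tfrac14,1-\epsilon]$, which holds with a constant now also depending on $\epsilon$, and notes that on $[\tfrac12,1-\epsilon]$ the factor $x^{2-k}$ is comparable to a constant.

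I expect the main obstacle to be the near-boundary analysis on $(0,\tfrac12]$: one must integrate the exact form from the interior point $\tfrac12$ rather than from $0$ to avoid the singular boundary term, and then extract exactly the linear rate in $x$ from the competition between the weight $x^{1-n}$ and the integral $\int_x^{1/2}s^{1-n}\,ds\sim x^{2-n}$. This hinges on $n>2$ (here $n=4k+3\geq 7$) and on the uniform positivity of $K$ and the $t_i$ coming from Corollary \ref{cor_Spnboundivd}; without the latter the source terms could degenerate and the integrating-factor estimate would fail.
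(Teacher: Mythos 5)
Your proposal is correct and follows essentially the same route as the paper: the bounds from Corollary \ref{cor_Spnboundivd} together with $(\ref{equn_Spny1lowerorder21})$ handle $y_1'$, interior estimates give the reference values at $x=\tfrac12$, and the integrating factor $K^{1/2}x^{1-n}(1-x^2)^n$ applied to $(\ref{equn_SpnEinstein03})-(\ref{equn_SpnEinstein05})$, integrated from $x$ to the interior point, yields $|y_i'|\leq Cx$ exactly as in the paper, with second derivatives then read off from the equations. The only cosmetic differences are that you bound $y_1''$ from $(\ref{equn_SpnEinstein01})$ rather than $(\ref{equn_SpnEinstein02})$, and you label the interior step a ``standard ODE bootstrap'' where the paper invokes interior elliptic estimates together with $(\ref{inequn_boundaryYamabeconstantSpn3})$ (which, via $(\ref{equn_SpnEinstein06})$ and the positive definiteness of $\Psi$, is what actually controls $y_i'$ there).
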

\begin{proof}
By $(\ref{equn_Spny1lowerorder21})$ and $(\ref{equn_SpnEinstein02})$, one can easily obtain $(\ref{ineqn_Spny1234leftbds})$ for $y_1$.
Notice that by Corollary \ref{cor_Spnboundivd},
\begin{align*}
 \delta\leq t_i(x) \leq \max\{1, t_1(0), \frac{n+5}{n-1+\frac{4t_3(0)}{t_1(0)}}\},
\end{align*}
for $x\in (0,1)$ and $ 1\leq i \leq 3$, and $K(0)< K(x) <1$ for $x\in (0,1)$. By the interior estimates of the second order elliptic equations $(\ref{equn_SpnEinstein01})-(\ref{equn_SpnEinstein05})$ and the inequality $(\ref{inequn_boundaryYamabeconstantSpn3})$, there exists some constant $C=C(\sigma,\tau)>0$ independent of the initial data and the solution so that
\begin{align*}
|y_i^{(k)}(x)| \leq C,
\end{align*}
for $1\leq i \leq 4$, $0 \leq k\leq 4$ and $x \in [\frac{1}{4}, \frac{3}{4}]$. To get global estimates, we multiply $x^{1-n}(1-x^2)^nK^{\frac{1}{2}}$ on both sides of the equation $(\ref{equn_SpnEinstein03})$ and do integration on $[x,\frac{3}{4}]$ to have 
\begin{align}
&x^{1-n}(1-x^2)^nK^{\frac{1}{2}}y_2'=2^{n-1}(\frac{3}{4})^nK^{\frac{1}{2}}(\frac{1}{2})y_2'(\frac{1}{2})\\
&+ 8 \int_x^{\frac{1}{2}}s^{1-n}(1-s^2)^{n-2}K^{\frac{1}{2}} (K^{-1}t_1t_2^2)^{\frac{1}{n}}[n+5-(n-1)t_1-4t_2-\frac{2t_1}{t_2t_3}]ds,\notag
\end{align}
for $x\in(0,\frac{3}{4})$, and hence by Corollary \ref{cor_Spnboundivd}, we have
\begin{align}
\label{ineqn_Spnyd1left302}|y_2'(x)| \leq Cx,
\end{align}
for $x\in(0,\frac{3}{4})$, with some constant $C=C(\sigma,\tau)>0$ independent of the solution and the initial data. Substituting $(\ref{ineqn_Spnyd1left302})$ to $(\ref{equn_SpnEinstein03})$, we then have
\begin{align}
&|y_2''(x)| \leq C
\end{align}
for $x\in (0,\frac{3}{4})$, with $C=C(\sigma,\tau)>0$ some constant independent of the initial data and the solutions. By similar discussion, $(\ref{ineqn_Spny1234leftbds})$ holds for $i=3,4$. This completes the proof of the lemma.
\end{proof}

Recall that by the non-positivity of the sectional curvature of $g$ and the Einstein equation $(\ref{eqn_Einstein})$, we have that $|W|_g\leq T\equiv  \sqrt{n(n^2-1)}$. Using the boundedness of the Weyl tensor, we give an estimate of the solution away from $x=0$.
\begin{lem}\label{lem_SpnyiboundrightGB}
Assume that $|W|_g\leq \varepsilon$ with some constant $0<\varepsilon\leq T$. Let $\delta_0$ be any constant in $(0,1)$.  Under the condition of Corollary \ref{cor_Spnboundivd}, we have
\begin{align}
&\label{ineqn_Spnyd1right301}|y_1^{(k)}|\leq C\,\varepsilon^2(1-x^2)^{4-k},\\
&\label{ineqn_Spnyd1right302}|y_i^{(k)}|\leq C\,\varepsilon(1-x^2)^{2-k},
\end{align}
for $2\leq i \leq 4$, $k=1,2$ and $x \in [\delta_0, 1]$, with some constant $C=C(\sigma,\tau,\delta_0)>0$.
\end{lem}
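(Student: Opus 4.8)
The plan is to integrate the equations $(\ref{equn_SpnEinstein01})$–$(\ref{equn_SpnEinstein05})$ backward from the regular endpoint $x=1$, where the boundary data $(\ref{equn_SpnBV01})$ give $y_i(1)=0$ and $y_i'(1)=0$ for $1\le i\le 4$, using the integrating factors already employed in Lemma \ref{lem_monotonicitySpn01} and Lemma \ref{lem_Spnuniformests301}, and to feed the integration with pointwise bounds extracted from the hypothesis $|W|_g\le\varepsilon$. The crucial first step is the curvature input. Since $g$ is Einstein with $Ric_g=-ng$, its full curvature tensor differs from that of a space of constant sectional curvature $-1$ only by its Weyl part, so $|W|_g\le\varepsilon$ forces every sectional curvature to satisfy $|K[g]+1|\le C\varepsilon$, pointwise and without any global input. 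Writing the radial curvatures $K[g](\partial_r,E_i)$ and the tangential curvatures $K[g](E_i,E_j)$ of $g=dr^2+\sinh^2(r)\bar h$ in terms of the $y_i$ and their derivatives, and recalling $\sinh^2 r=\frac{(1-x^2)^2}{4x^2}$, one sees that the intrinsic curvature of the geodesic spheres carries exactly the weight $(1-x^2)^{-2}$ appearing in the source brackets of $(\ref{equn_SpnEinstein03})$–$(\ref{equn_SpnEinstein05})$, while the second-fundamental-form contributions are of lower order in $(1-x^2)$. Organizing these identities by order yields, uniformly on $[\delta_0,1]$ by virtue of the two-sided bounds on $t_i$ and $K$ from Corollary \ref{cor_Spnboundivd}, the pointwise estimate that the full source terms $S_i:=8(1-x^2)^{-2}(K^{-1}t_1t_2t_3)^{\frac1n}B_i$ in $(\ref{equn_SpnEinstein03})$–$(\ref{equn_SpnEinstein05})$ obey $|S_i|\le C\varepsilon$.

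Granting this, $(\ref{ineqn_Spnyd1right302})$ follows by a twofold integration. Multiplying $(\ref{equn_SpnEinstein03})$ by the integrating factor $\mu(x)=x^{1-n}(1-x^2)^nK^{\frac12}$ recasts it as $(\mu y_2')'=\mu S_2$; integrating from $x$ to $1$ and using $\mu(1)y_2'(1)=0$ gives $|\mu(x)y_2'(x)|\le C\varepsilon\int_x^1\mu\,ds$. Since $\mu(s)\sim(1-s^2)^n$ near $s=1$ and $x\ge\delta_0$ keeps $s^{1-n}$ bounded, this produces $|y_2'(x)|\le C\varepsilon(1-x^2)$. A further integration with $y_2(1)=0$ gives $|y_2(x)|\le C\varepsilon(1-x^2)^2$, and reinserting both into $(\ref{equn_SpnEinstein03})$ bounds $|y_2''|\le C\varepsilon$. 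The same argument applies verbatim to $(\ref{equn_SpnEinstein04})$ and $(\ref{equn_SpnEinstein05})$, giving $(\ref{ineqn_Spnyd1right302})$ for $i=3,4$.

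For $(\ref{ineqn_Spnyd1right301})$ the extra power of $\varepsilon$ and two extra powers of $(1-x^2)$ come from the quadratic nature of the source in $(\ref{equn_SpnEinstein01})$. The quantity $\Psi$ of $(\ref{eqn_SpnPsi})$ is a sum of squares of linear combinations of the $y_i'$, so the bounds just obtained give $\Psi\le C\varepsilon^2(1-x^2)^2$, while $(y_1')^2$ is of still higher order. With the integrating factor $x^{-1}(1-x^2)^2$ of $(\ref{equn_singularintSpn21})$ one writes $(x^{-1}(1-x^2)^2y_1')'=-\frac{1}{2n^2}x^{-1}(1-x^2)^2[n(y_1')^2+\Psi]$, whose right-hand side is $O(\varepsilon^2 x^{-1}(1-x^2)^4)$. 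Integrating from $x$ to $1$ with $y_1'(1)=0$ yields $|y_1'(x)|\le C\varepsilon^2(1-x^2)^3$, and then $(\ref{equn_SpnEinstein01})$ itself gives $|y_1''|\le C\varepsilon^2(1-x^2)^2$.

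The main obstacle is the first step: extracting the uniform pointwise bound $|S_i|\le C\varepsilon$ (together with the crude first-derivative bounds needed to cancel the second-fundamental-form terms) from $|W|_g\le\varepsilon$. The delicate point is that each sectional-curvature deviation $K[g]+1$ mixes the intrinsic bracket carrying $S_i$ with first-order terms in the $y_i'$, so the argument must be organized by order in $(1-x^2)$ — using the radial curvatures to control the derivatives first and only then the tangential curvatures to isolate the brackets — and it must keep every constant dependent only on $\sigma,\tau,\delta_0$ through the uniform bounds of Corollary \ref{cor_Spnboundivd}. Once the source is controlled, the remaining integrations are routine, and the boundary data at $x=1$ make every boundary term vanish.
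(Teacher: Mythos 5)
Your integration scheme in the second and third steps is sound, but the proposal stands or falls on the first step, and that step is a genuine gap: you never actually extract the pointwise bound $|S_i|\le C\varepsilon$ (or any first-derivative bound) from $|W|_g\le\varepsilon$; you only outline a plan and yourself label it ``the main obstacle.'' Moreover, the outline contains a false intermediate claim. Near $x=1$ the second-fundamental-form contributions in the Gauss equation are \emph{not} of lower order in $(1-x^2)$: the products of principal curvatures of the geodesic spheres carry the leading term $\coth^2 r=1+4x^2(1-x^2)^{-2}$, which is of exactly the same order $(1-x^2)^{-2}$ as the intrinsic curvature, and the entire difficulty is the cancellation between these two singular quantities. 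Similarly, the radial sectional curvatures are second-order objects ($K[g](\partial_r,E_i)=-\partial_r^2\sqrt{g_{ii}}/\sqrt{g_{ii}}$), so ``using the radial curvatures to control the derivatives first'' is not a pointwise deduction; it would require integrating a second-order differential inequality outward from the center $r=0$ (with the regularity of $g$ at $p_0$ as the boundary data), an argument that is nowhere carried out. Since the whole content of Lemma \ref{lem_SpnyiboundrightGB} is precisely the conversion of the curvature hypothesis into quantitative bounds on the ODE data, deferring this conversion means the lemma is not proved.

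The paper's proof avoids sectional curvatures altogether, and this is the idea you are missing. It uses the mixed Weyl components $W_{piq0}$, with $p,i,q$ tangential and $0$ the radial direction: because $g=dr^2+g_r$ is block diagonal, the constant-curvature part of $R_{piq0}$ vanishes, so $W_{piq0}=R_{piq0}$, and by the Codazzi structure of this cohomogeneity-one metric these components are \emph{first-order} expressions in the $I_i$. Concretely, the hypothesis yields pointwise, for all $x\in(0,1)$, bounds of the form $\frac{4x^2}{1-x^2}I_4^{-1/2}\big|\frac{d}{dx}(I_4^{-1/2}I_q^{1/2})\big|\le\varepsilon$, hence $|y_{q+1}'|\le\frac{1-x^2}{2x^2}\,\delta^{-(3+n)/(2n)}\varepsilon$ (this is (\ref{ineqn_Spny234-1})), with the constant coming from Corollary \ref{cor_Spnboundivd} and with no integration or cancellation argument. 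Thus the $k=1$ case of (\ref{ineqn_Spnyd1right302}) is immediate, and the remaining logic runs in the direction opposite to yours: derivative bounds first, then (after one integration, giving $|t_i-1|\le C\varepsilon(1-x^2)^2$) the sources and the $k=2$ bounds from the equations, and finally $y_1$ by integrating (\ref{equn_SpnEinstein01}). On that last point there is a secondary flaw in your argument: you assert that $(y_1')^2$ ``is of still higher order,'' which is circular as stated, since at that stage you only know $0<y_1'\le 4nx(1-x^2)^{-1}$ on $[\delta_0,1]$; the paper removes the quadratic term exactly by including the factor $K^{1/(2n)}$ in the integrating factor (a Gronwall argument using $\int_x^1 y_1'\,ds=-y_1(x)\le C$ would also work), after which your computation goes through.
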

\begin{proof}
By the condition of the lemma, the Weyl tensor has the bound
\begin{align}\label{ineqn_SpnWeylboundmixed}
|(g_{ii})^{-\frac{1}{2}}(g_{qq})^{-\frac{1}{2}}(g_{pp})^{-\frac{1}{2}}W_{piq0}(g)|\leq \varepsilon,
\end{align}
which is
\begin{align*}
&|(g_{ii})^{-\frac{1}{2}}(g_{qq})^{-\frac{1}{2}}(g_{pp})^{-\frac{1}{2}}W_{piq0}(g)|\\
&=|(g_{ii})^{-\frac{1}{2}}(g_{qq})^{-\frac{1}{2}}(g_{pp})^{\frac{1}{2}}[\frac{1}{2}\frac{d g_{ii}}{dr}(-(g_{ii})^{-1}+(g_{pp})^{-1}+(g_{pp})^{-1}g_{qq}(g_{ii})^{-1})\\ 
&+\frac{1}{2}\frac{dg_{pp}}{dr}((g_{pp})^{-1}-(g_{pp})^{-2}g_{ii}+(g_{pp})^{-2}g_{qq})-(g_{pp})^{-1}\frac{d g_{qq}}{dr}]|\\
&=\frac{x^2}{(1-x^2)}I_p^{\frac{1}{2}}I_q^{-\frac{1}{2}}I_i^{-\frac{1}{2}}|[I_i^{-1}\frac{d I_i}{dx}(-1+I_iI_p^{-1}+I_p^{-1}I_q)+I_p^{-1}\frac{d I_p}{dx}(1-I_iI_p^{-1}+I_p^{-1}I_q)-2(I_q^{-1}\frac{dI_q}{dx})I_p^{-1}I_q]|\\
&=2\frac{x^2}{(1-x^2)}I_q^{-\frac{1}{2}}|\frac{d}{dx}[I_i^{\frac{1}{2}}I_p^{-\frac{1}{2}}+I_i^{-\frac{1}{2}}I_p^{\frac{1}{2}}-I_i^{-\frac{1}{2}}I_p^{-\frac{1}{2}}I_q]|\leq\varepsilon,
\end{align*}
for any $\{i, p, q\}=\{1,2,3\}$; while for $1\leq q\leq 3$ and $p,i\geq 4$, the inequality becomes
\begin{align*}
\frac{2x^2}{1-x^2}I_4^{-\frac{1}{2}}|I_4^{-\frac{3}{2}}I_q^{\frac{1}{2}}I_4'-I_4^{-1}I_q^{-\frac{1}{2}}I_q'|=\frac{4x^2}{1-x^2}I_4^{-\frac{1}{2}}|\frac{d}{dx}(I_4^{-\frac{1}{2}}I_q^{\frac{1}{2}})|\leq \varepsilon.
\end{align*}
Therefore,
\begin{align*}
&|\big(-(\frac{t_i}{t_p})^{\frac{1}{2}}+(\frac{t_p}{t_i})^{\frac{1}{2}}+(\frac{t_q}{t_i})^{\frac{1}{2}}(\frac{t_q}{t_p})^{\frac{1}{2}}\big)(y_{p+1}'-y_{i+1}')+2(\frac{t_q}{t_i})^{\frac{1}{2}}(\frac{t_q}{t_p})^{\frac{1}{2}}(y_{q+1}'-y_{p+1}')|\leq \frac{1-x^2}{x^2}I_q^{\frac{1}{2}},
\end{align*}
for any $\{i, p, q\}=\{1,2,3\}$, and also it holds that
\begin{align*}
|y_{q+1}'|=2 t_q^{-\frac{1}{2}}|\frac{d}{dx}(I_4^{-\frac{1}{2}}I_q^{\frac{1}{2}})|\leq \frac{ 1-x^2}{2x^2}I_4^{\frac{1}{2}}t_q^{-\frac{1}{2}}\varepsilon,
\end{align*}
for $x\in(0,1)$ and $q=1,2,3$, and hence by Corollary \ref{cor_Spnboundivd},
\begin{align}\label{ineqn_Spny234-1}
|y_{q+1}'|\leq \frac{ 1-x^2}{2x^2}\delta^{-\frac{(3+n)}{2n}}\varepsilon
\end{align}
for $x\in(0,1)$ and $q=1,2,3$, with the constant $\delta>0$ in Corollary \ref{cor_Spnboundivd}. Set $C=\frac{1}{2\delta_0^2}\delta^{-\frac{(3+n)}{2n}}$ and hence $(\ref{ineqn_Spnyd1right302})$ holds on $x\in[\delta_0,1]$ for $k=1$ and $i=2,3,4$. Substituting the inequalities $(\ref{ineqn_Spnyd1right302})$, $(\ref{equn_Spny1lowerorder21})$, the bound of $K$ and the estimate of $t_i$ in Corollary \ref{cor_Spnboundivd} into $(\ref{equn_SpnEinstein03})-(\ref{equn_SpnEinstein05})$, we obtain immediately that there exists a constant $C=C(\sigma,\tau,\delta_0)>0$ such that $(\ref{ineqn_Spnyd1right302})$ holds for $k=2$ and $i=2,3,4$.

For $y_1'$, we multiply $x^{-1}(1-x^2)^2K^{\frac{1}{2n}}$ on both sides of $(\ref{equn_SpnEinstein01})$ and do integration on $(x, 1)$ for $\delta_0\leq x \leq 1$ so that
\begin{align*}
&(x^{-1}(1-x^2)^2K^{\frac{1}{2n}}y_1')'+ \frac{1}{2n^2}x^{-1}(1-x^2)^2K^{\frac{1}{2n}}\Psi(x)=0,\,\,\,\,\text{and}\,\,\\
&y_1'(x)=\frac{1}{2n^2}K(x)^{-\frac{1}{2n}}x(1-x^2)^{-2}\int_x^1s^{-1}(1-s^2)^2K^{\frac{1}{2n}}(s)\Psi(s)ds\\
&\leq C \,\varepsilon^2 (1-x^2)^3,
\end{align*}
with some constant $C=C(\sigma,\tau,\delta_0)>0$, where the function $\Psi$ is defined in $(\ref{eqn_SpnPsi})$. Here we have used the estimates of $t_i$ in Corollary \ref{cor_Spnboundivd}, the monotonicity of $K$ and the inequality $(\ref{ineqn_Spny234-1})$ for $q=1,2,3$. Substituting it back to $(\ref{equn_SpnEinstein01})$, we get the estimate $(\ref{ineqn_Spnyd1right301})$ for $y_1''$ with some constant $C=C(\sigma,\tau,\delta_0)>0$. This proves the lemma.
\end{proof}

Based on the estimates in Lemma \ref{lem_Spnuniformests301} and Lemma \ref{lem_SpnyiboundrightGB}, we have the compactness of the solutions to $(\ref{equn_SpnEinstein01})-(\ref{equn_SpnBV01})$. Combining with the perturbation results in Graham-Lee \cite{GL} and Lee \cite{Lee}, we can obtain an existence result of solutions to $(\ref{equn_SpnEinstein01})-(\ref{equn_SpnBV01})$ by a direct continuity argument. In particular, using the same argument of Lemma 4.4 and Theorem 1.4 in \cite{Li2}, we give the following existence result without proof. For details of the proof, one refers to \cite{Li2}.
\begin{thm}\label{thm_someSpnmetric}
Let $B_1\subseteq \mathbb{R}^{n+1}$ be the unit ball on the Euclidean space with the unit sphere $\mathbb{S}^n$ as its boundary, where $n=4k+3$ for some integer $k\geq 1$. Let $\hat{g}$ be a homogeneous metric on the boundary $\mathbb{S}^n\cong \text{Sp}(k+1)/\text{Sp}(k)$ 
so that $\hat{g}$ has the standard diagonal form
\begin{align*}
\hat{g}=\lambda_1\sigma_1^2+\lambda_2\sigma_2^2+\lambda_3\sigma_3^2+(\sigma_4^2+..+\sigma_n^2),
\end{align*}
 at a point $q\in \mathbb{S}^n$, where $\lambda_i\in [\frac{2}{3}, \frac{4}{3})$ ($1\leq i \leq 3$) are three positive constants and $\sigma_1,..,\sigma_n$ are the $1$-forms with respect to the basis vectors in $\mathfrak{p}$, in the $\text{Ad}_{\text{Sp}(k)}$-invariant splitting $\text{sp}(k+1)=\text{sp}(k)\oplus \mathfrak{p}$. For $t\in[0,1]$, we define a $1$-parameter family of metrics
   \begin{align*}
   \hat{g}^t=((1-t)\lambda_1+t)\sigma_1^2\,+\,((1-t)\lambda_2+t)\sigma_2^2\,+\,((1-t)\lambda_3+t)\sigma_3^2+(\sigma_4^2+..+\sigma_n^2).
   \end{align*}
   Then as the parameter $t$ varies from $t=1$ continuously on the interval $[0, 1]$, either it holds that there exists a conformally compact Einstein metric on $B_1$ which is non-positively curved with $(\mathbb{S}^n, [\hat{g}^t])$ as its conformal infinity for each $t\in [0, 1]$; or there exists $t_1\in(0,1)$, such that for each $t\in [t_1, 1]$ there exists a conformally compact Einstein metric $g^t$ on $B_1$ which is non-positively curved with $(\mathbb{S}^n, [\hat{g}^t])$ as its conformal infinity and there exists $p\in B_1$ such that the sectional curvature of $g^{t_1}$ is zero in some direction at $p$ and moreover, for any $\epsilon>0$ small there exists $t_2\in(t_1-\epsilon, t_1)$ such that there exists a conformally compact Einstein metric $g^{t_2}$ on $B_1$ with $(\mathbb{S}^n, [\hat{g}^{t_2}])$ as its conformal infinity and the sectional curvature of $g^{t_2}$ is positive in some direction at some point $p\in B_1$.
\end{thm}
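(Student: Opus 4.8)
The plan is to run a continuity method along the family $\hat g^t$, exactly as for the generalized Berger and $\text{SU}(k+1)$ cases in \cite{Li2}. For $t\in[0,1]$ set $S=\{t\in[0,1]:\ \exists\ \text{a non-positively curved CCE metric on } B_1 \text{ with conformal infinity}\ (\mathbb{S}^n,[\hat g^t])\}$. First I would record that the family is admissible for all of our estimates: since each $\lambda_i\in[\tfrac23,\tfrac43)$, every coefficient $(1-t)\lambda_i+t$ is a convex combination of two numbers in $[\tfrac23,\tfrac43)$, hence remains in $[\tfrac23,\tfrac43)$, so along the entire family $\tfrac12<t_i(0)/t_j(0)<2$ and the hypotheses of Corollary \ref{cor_Spnboundivd} (and therefore Lemma \ref{lem_Spnuniformests301} and Lemma \ref{lem_SpnyiboundrightGB}) hold uniformly in $t$. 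At $t=1$ the conformal infinity is the round sphere and the filling is hyperbolic space, which is strictly negatively curved; thus $1\in S$. Put $t_1=\inf\{s\in[0,1]:\ [s,1]\subseteq S\}$.

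The closedness step is the heart of the matter. Given $t_j\downarrow t_1$ with non-positively curved CCE metrics $g^{t_j}$, I would first normalize them to share a common center of gravity $p_0$: as in the $\text{SU}(k+1)$ proof in \cite{Li2}, pull each $g^{t_j}$ back by the diffeomorphism of $\overline{B}_1$ that is the identity on $\mathbb{S}^n$ and matches the exponential maps at the respective centers of gravity, so that $x=e^{-r}$ becomes a common geodesic defining function. Reducing to the system $(\ref{equn_SpnEinstein01})$--$(\ref{equn_SpnBV01})$ along a fixed geodesic, Lemma \ref{lem_Spnuniformests301} controls the functions $y_i^{j}$ and two derivatives on $[0,\tfrac34]$, while the uniform bound $|W|_{g^{t_j}}\le T=\sqrt{n(n^2-1)}$ coming from non-positivity, together with Lemma \ref{lem_SpnyiboundrightGB}, controls them on $[\tfrac12,1]$; by the admissibility just noted these bounds are uniform in $j$. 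Feeding them back into the Einstein equations upgrades them to uniform $C^k$ bounds on compact subsets of $M$, so a subsequence converges in pointed Cheeger--Gromov $C^{k,\alpha}$ to a CCE metric $g^{t_1}$, which is non-positively curved because that condition is closed. Since $x^2g^{t_j}\to x^2g^{t_1}$ up to the boundary in $C^{2,\alpha}$, the conformal infinity is $(\mathbb{S}^n,[\hat g^{t_1}])$, and Theorem \ref{thm_expansion1} makes $g^{t_1}$ smooth with the expansion $(\ref{equn_expansion1})$. Hence $t_1\in S$ and $[t_1,1]\subseteq S$.

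For openness and the dichotomy I would invoke Lee's perturbation theorem \cite{Lee}, valid at any non-positively curved CCE metric. If $t_1=0$ we are in the first alternative. Suppose $t_1>0$. If $g^{t_1}$ were strictly negatively curved, then combining the compactness bound on curvature over bounded sets with $K[g^{t_1}]\to-1$ near infinity yields a uniform upper bound $K[g^{t_1}]\le -c<0$; Lee's theorem then produces CCE metrics that remain negatively curved for all $t$ in a two-sided neighborhood $(t_1-\epsilon,t_1+\epsilon)$, forcing $(t_1-\epsilon,1]\subseteq S$ and contradicting the definition of $t_1$. Therefore $g^{t_1}$ must have a zero sectional curvature in some direction at some $p\in B_1$. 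Finally, for any small $\epsilon>0$ the definition of $t_1$ provides $t_2\in(t_1-\epsilon,t_1)$ with $t_2\notin S$; shrinking $\epsilon$ so that $\hat g^{t_2}$ lies in the perturbation neighborhood of $\hat g^{t_1}$, Lee's theorem still produces a CCE metric $g^{t_2}$ with conformal infinity $(\mathbb{S}^n,[\hat g^{t_2}])$, and since $t_2\notin S$ this $g^{t_2}$ is not non-positively curved, i.e.\ its sectional curvature is positive in some direction at some point. This is the second alternative.

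The main obstacle is the closedness/compactness step: one must glue the near--$x=0$ estimate of Lemma \ref{lem_Spnuniformests301} to the near--$x=1$ estimate of Lemma \ref{lem_SpnyiboundrightGB}, promote both to higher-order interior bounds through the Einstein equations, and---crucially---carry out the Cheeger--Gromov extraction so that the limit is a genuine CCE metric whose conformal compactification converges up to the boundary, thereby preserving the prescribed conformal infinity; the normalization of the varying centers of gravity by diffeomorphisms is what makes this limit well posed. Everything else is bookkeeping once Lee's perturbation result is available to separate the two branches of the dichotomy at the curvature threshold.
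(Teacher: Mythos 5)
Your proposal is correct and follows essentially the same route as the paper, which states this theorem without proof and refers to the argument of Lemma 4.4 and Theorem 1.4 in \cite{Li2}: a continuity method starting at $t=1$, with closedness obtained from the a priori estimates of Lemma \ref{lem_Spnuniformests301} and Lemma \ref{lem_SpnyiboundrightGB} together with the center-of-gravity normalization and pointed Cheeger--Gromov convergence, and with openness and the curvature dichotomy obtained from Lee's perturbation theorem \cite{Lee}. No gaps worth noting.
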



\section{Uniqueness of the solution to the boundary value problem $(\ref{equn_Spn1Einstein01})-(\ref{equn_Spn1BV01})$}\label{sect_Spnsolutionestimates}

\begin{lem}\label{lem_monotonicitySp101}
For the initial data $t_1(0)\neq 1$, assume $(y_1,y_2)$ is a global solution of the boundary value problem $(\ref{equn_Spn1Einstein01})-(\ref{equn_Spn1BV01})$. Then we have $y_1'(x)>0$ for $x\in(0,1)$. Also, we have the inequality
\begin{align}\label{ineqn_Spn1y2upperbd}
t_1(x)\leq \max \{1,t_1(0)\}
\end{align}
for $x\in[0,1]$. Moreover, if $y_2'$ has a zero on $x\in(0,1)$, assume $x_2$ is the largest zero of $y_2'$ on $x\in(0,1)$, then for $x\in(x_2,1)$ we have
\begin{align*}
y_2'(x)>0.
\end{align*}
\end{lem}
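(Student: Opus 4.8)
The plan is to establish the three assertions in turn, adapting the integrating-factor and maximum-principle arguments of Lemma \ref{lem_monotonicitySpn01} and Corollary \ref{cor_Spnboundivd} to the reduced system $(\ref{equn_Spn1Einstein01})$--$(\ref{equn_Spn1Einstein03})$. Throughout I would use that $y_1$ and $y_2$ are analytic on $(0,1)$, so $y_1'$ and $y_2'$ either vanish identically or have only discrete zeros; since $t_1(0)\neq 1=t_1(1)$, neither is identically zero, and the volume comparison recorded after Lemma \ref{lem_monotonicitySpn01} gives $K(0)<1=K(1)$, i.e. $y_1(0)<y_1(1)$.

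To get $y_1'>0$, I would multiply $(\ref{equn_Spn1Einstein01})$ by $x^{-1}(1-x^2)^2$, using $\frac{d}{dx}[x^{-1}(1-x^2)^2]=-x^{-2}(1+3x^2)(1-x^2)$ to recast it as
\begin{align*}
\big(x^{-1}(1-x^2)^2 y_1'\big)' + \frac{1}{2n}x^{-1}(1-x^2)^2\big[(y_1')^2 + 3(n-3)(y_2')^2\big] = 0.
\end{align*}
If $y_1'$ had a zero in $(0,1)$, taking $x_1$ to be the largest one and integrating on $[x_1,1]$ makes both boundary terms vanish (since $y_1'(x_1)=0$ and $(1-x^2)^2$ vanishes at $x=1$), forcing the nonnegative integrand to vanish; then $y_1'\equiv 0$ on $[x_1,1]$ and, by analyticity, on $[0,1]$, contradicting $y_1(0)<y_1(1)$. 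Hence $y_1'$ has no zero in $(0,1)$ and, being positive somewhere, is positive throughout.

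For the upper bound $(\ref{ineqn_Spn1y2upperbd})$, the algebraic input is the factorization
\begin{align*}
t_1\big[(n+3)t_1 - (n+5) + 2t_1^{-1}\big] = (n+3)t_1^2 - (n+5)t_1 + 2 = (n+3)(t_1-1)\big(t_1 - \frac{2}{n+3}\big),
\end{align*}
whose roots are exactly $1$ and $\frac{2}{n+3}$. If $t_1$ attains its maximum over $[0,1]$ at an interior point $x_0$, then $y_2'(x_0)=0$ and $y_2''(x_0)\leq 0$, so evaluating $(\ref{equn_Spn1Einstein03})$ at $x_0$ shows the bracket is $\leq 0$ there, i.e. $\frac{2}{n+3}\leq t_1(x_0)\leq 1$. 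Thus an interior maximum value is at most $1$, and comparing with the boundary values $t_1(0)$ and $t_1(1)=1$ yields $t_1\leq\max\{1,t_1(0)\}$ on $[0,1]$.

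For the final assertion I would take the integrating factor $\mu = K^{1/2}x^{1-n}(1-x^2)^n$, arranged so that $\mu'/\mu = \frac{1}{2}y_1' - x^{-1}(n-1+(n+1)x^2)(1-x^2)^{-1}$, turning $(\ref{equn_Spn1Einstein03})$ into
\begin{align*}
(\mu y_2')' = 8\,\mu\,(1-x^2)^{-2}(K^{-1}t_1^3)^{\frac{1}{n}}\big[(n+3)t_1 - (n+5) + 2t_1^{-1}\big].
\end{align*}
Since $x_2$ is the largest zero of $y_2'$ in $(0,1)$ and $y_2'(1)=0$, the function $y_2'$ keeps a constant sign on $(x_2,1)$. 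Assuming for contradiction $y_2'<0$ there, $t_1$ is decreasing with $t_1(1)=1$, so $t_1>1$ on $(x_2,1)$; by the factorization the bracket is then strictly positive and the right-hand side is positive, so $\mu y_2'$ is strictly increasing. As $\mu(x_2)y_2'(x_2)=0$ and $\mu>0$, integrating from $x_2$ gives $y_2'>0$ on $(x_2,1)$, a contradiction. Hence $y_2'>0$ on $(x_2,1)$. The main obstacle is precisely this last step: the first two claims rely on essentially local sign information (one total derivative, resp. one interior extremum), whereas here I must feed the boundary value $t_1(1)=1$ back into the nonlinearity to force $t_1>1$ on all of $(x_2,1)$ under the contradiction hypothesis — this is what gives the bracket $(n+3)(t_1-1)(t_1-\frac{2}{n+3})$ a definite sign and lets the integration close; without the monotonicity forced by $y_2'<0$, the bracket changes sign across $t_1=\frac{2}{n+3}$ and the argument would fail.
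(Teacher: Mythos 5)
Your proposal is correct and follows essentially the same route as the paper: the same integration of $(\ref{equn_Spn1Einstein01})$ against $x^{-1}(1-x^2)^2$ for $y_1'>0$, the same interior-critical-point evaluation of $(\ref{equn_Spn1Einstein03})$ with the factorization $(n+3)(t_1-1)(t_1-\tfrac{2}{n+3})$ for the bound $t_1\leq\max\{1,t_1(0)\}$, and the same integrating factor $K^{1/2}x^{1-n}(1-x^2)^n$ for the sign of $y_2'$ on $(x_2,1)$. The only (cosmetic) difference is in the last step: the paper integrates $(\mu y_2')'$ over all of $[x_2,1]$, where both boundary terms vanish, and contradicts the resulting integral identity, whereas you integrate from $x_2$ to an arbitrary $x$ and reach a pointwise sign contradiction; both hinge on the same observation that $y_2'<0$ forces $t_1>1$ and hence a strict sign of the nonlinearity.
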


\begin{proof}
The proof of the inequality $y_1'(x)>0$ on $x\in(0,1)$ is the same as Lemma \ref{lem_monotonicitySpn01}.

The inequality $(\ref{ineqn_Spn1y2upperbd})$ holds if $t_1$ is monotone on $x\in(0,1)$. Now we assume that $\bar{x}$ is a local maximum point of $t_1$ on $x\in(0,1)$, and hence $y_2'(\bar{x})=0$ and $y_2''(\bar{x})\leq 0$. By $(\ref{equn_Spn1Einstein03})$,
\begin{align*}
y_2''(\bar{x})=8 (1-x^2)^{-2}\, (K^{-1}t_1^3)^{\frac{1}{n}}\,(n+3-2t_1^{-1})(t_1-1)\big|_{x=\bar{x}}\,\,\leq 0.
\end{align*}
Therefore,
\begin{align}\label{ineqn_Spn1y2upperbd1}
\frac{2}{n+3} \leq t_1(\bar{x})\leq 1.
\end{align}
 This proves the inequality $(\ref{ineqn_Spn1y2upperbd})$. Similarly, at a local minimum point $\tilde{x}$ of $t_1$ on $x\in(0,1)$, we have
\begin{align*}
t_1(\tilde{x})\leq \frac{2}{n+3},\,\,\,\text{or}\,\,\,\,t_1(\tilde{x})\geq 1.
\end{align*}
Since $t_1(1)=1$ and $t_1$ has isolated zeroes, by $(\ref{ineqn_Spn1y2upperbd1})$ we have that
\begin{align}\label{ineqn_Spn1y2lowerbd}
t_1(\tilde{x})\leq \frac{2}{n+3}.
\end{align}
Let $x_2$ be the largest zero of $y_2'$ on $x\in(0,1)$. Then $(1-t_1)y_2'>0$ on $x\in(x_2,1)$. We multiply $K^{\frac{1}{2}}x^{1-n}(1-x^2)^n$ on both sides of $(\ref{equn_Spn1Einstein03})$ to have
\begin{align*}
(K^{\frac{1}{2}}x^{1-n}(1-x^2)^ny_2')'
&- 8 x^{1-n}(1-x^2)^{n-2}K^{\frac{1}{2}} (K^{-1}t_1^3)^{\frac{1}{n}}(n+3-2t_1^{-1})(t_1-1)=0,
\end{align*}
 and do integration on $x\in[x_2,1]$ to have
\begin{align}\label{ineqn_Spn1y2lowerbd1}
- 8 \int_{x_2}^1x^{1-n}(1-x^2)^{n-2}K^{\frac{1}{2}} (K^{-1}t_1^3)^{\frac{1}{n}}(n+3-2t_1^{-1})(t_1-1)dx=0.
\end{align}
If $1-t_1<0$ on $x\in(x_2,1)$, then $(n+3-2t_1^{-1})(t_1-1)>0$, contradicting with $(\ref{ineqn_Spn1y2lowerbd1})$. Therefore, $1-t_1>0$ and $y_2'>0$ on $x\in(x_2,1)$, and moreover,
\begin{align*}
t_1(x_2)< \frac{2}{n+3}.
\end{align*}
\end{proof}

By $(\ref{equn_Spn1Einstein06})$ and $(\ref{ineqn_Spn1y2upperbd})$, for $t_1(0)\leq\frac{n+5}{3}$, we have
\begin{align}
y_1'&\label{equn_Spn1y1lowerorder1}=x^{-1}(1-x^2)^{-1}[2n(1+x^2)-\sqrt{4n^2(1+x^2)^2+\frac{3(n-3)}{n-1}x^2(1-x^2)^2(y_2')^2-\frac{16n}{n-1}x^2\Upsilon(x)}\,\,]\\
&=x^{-1}(1-x^2)^{-1}[2n(1+x^2)\notag\\
&-\sqrt{4n^2(1-x^2)^2+\frac{3(n-3)}{n-1}x^2(1-x^2)^2(y_2')^2+\frac{16n}{n-1}x^2 ( K^{-1}t_1^3)^{\frac{1}{n}}\big((n-3)\big(n+5-3t_1\big)+6t_1^{-1}\big)}\,\,],\notag
\end{align}
for $x\in(0,1)$, where $\Upsilon(x)=n(n-1)-( K^{-1}t_1^3)^{\frac{1}{n}}\big((n-3)(n+5)-3(n-3)t_1+6t_1^{-1}\big)$. Since $y_1'>0$ for $x\in(0,1)$, it is clear that
\begin{align}\label{inequn_boundaryYamabeconstantSpn1}
\Upsilon(x)=n(n-1)-( K^{-1}t_1^3)^{\frac{1}{n}}\big((n-3)(n+5)-3(n-3)t_1+6t_1^{-1}\big)>\frac{3(n-3)}{n-1}x^2(1-x^2)^2(y_2')^2\,\geq\, 0,
\end{align}
for $ x\in(0,1)$.
By Lemma \ref{lem_monotonicitySp101} and $(\ref{equn_Spn1y1lowerorder1})$, we have that
\begin{align*}
y_1'&\leq x^{-1}(1-x^2)^{-1}[2n(1+x^2)
-\sqrt{4n^2(1-x^2)^2}\,\,]\\
&\leq 4n x(1-x^2)^{-1},
\end{align*}
for $x\in(0,1)$. And also by $(\ref{inequn_boundaryYamabeconstantSpn1})$, 
\begin{align*}
n(n-1)\geq n(n-1)K^{\frac{1}{n}}\geq t_1^{\frac{3}{n}}\big((n-3)(n+5)-3(n-3)t_1+6t_1^{-1}\big)
\end{align*}
for $x\in[0,1]$, and hence $t_1$ has the lower bound
\begin{align}\label{ineqn_Spn1y2lowerbd1-2}
t_1(x)>\,\big(\frac{6}{n(n-1)}\big)^{\frac{n}{n-3}},
\end{align}
for $x\in[0,1]$.

\begin{lem}\label{lem_Spn1monotonicityt1}
There exists two constants $\varepsilon>0$ and $\beta>0$ such that if $\sup_M|W|_g\leq \varepsilon$, $t_1(0)\neq0$ and $1-t_1(0)\leq \beta$, then $y_2'$ has no zero on $x\in(0,1)$. That is to say, $t_1$ is monotone on $x\in(0,1)$.
\end{lem}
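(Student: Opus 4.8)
The plan is to argue by contradiction and show that the smallness of $\varepsilon$ and $\beta$ forces $t_1(x)$ to stay close to $1$ on all of $[0,1]$, which is incompatible with the location of the last zero of $y_2'$ supplied by Lemma \ref{lem_monotonicitySp101}. Recall from that lemma that if $y_2'$ had a zero on $(0,1)$ and $x_2$ were the largest one, then necessarily $t_1(x_2)<\frac{2}{n+3}$, a value bounded away from $1$ since $n=4k+3\geq 7$. Hence it is enough to prove that, for $\varepsilon$ and $\beta$ small, $t_1(x)>\frac{2}{n+3}$ for every $x\in[0,1]$. Writing $t_1=e^{y_2}$ with $y_2(0)=\log t_1(0)$, I would reduce this to a uniform bound on the total variation $\int_0^1|y_2'|\,dx\leq\eta$: combined with $t_1(0)\geq 1-\beta$ this gives $t_1(x)\geq t_1(0)e^{-\eta}\geq(1-\beta)e^{-\eta}$, which exceeds $\frac{2}{n+3}$ once $\beta$ and $\eta$ are small.

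The only subtlety in bounding the total variation is that the control of $y_2'$ coming from the smallness of the Weyl tensor degenerates at $x=0$, so I would split $[0,1]$ at a small point $\delta_0$. On $[\delta_0,1]$ the same computation of the mixed Weyl components as in Lemma \ref{lem_SpnyiboundrightGB}, specialized to the present case $t_1=t_2=t_3$ and $y_2=y_3=y_4$, yields $|y_2'(x)|\leq C\varepsilon\,\frac{1-x^2}{x^2}\leq C\varepsilon\,\delta_0^{-2}(1-x^2)$, where the uniform lower bound $(\ref{ineqn_Spn1y2lowerbd1-2})$ and the upper bound $(\ref{ineqn_Spn1y2upperbd})$ on $t_1$ keep the constant independent of the solution; integrating gives $\int_{\delta_0}^1|y_2'|\,dx\leq C(\delta_0)\,\varepsilon$. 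On $[0,\delta_0]$ the Weyl bound is useless, so instead I would run the integrating-factor argument for $(\ref{equn_Spn1Einstein03})$ together with interior elliptic estimates exactly as in Lemma \ref{lem_Spnuniformests301}; in this reduced system that argument uses only the uniform two-sided bounds for $t_1$ and $K$ and the bound $y_1'\leq 4nx(1-x^2)^{-1}$, and it produces $|y_2'(x)|\leq Cx$ with a constant $C$ independent of $\varepsilon$, $\beta$ and the solution. Consequently $\int_0^{\delta_0}|y_2'|\,dx\leq\tfrac12 C\delta_0^2$.

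Adding the two contributions gives $\int_0^1|y_2'|\,dx\leq\tfrac12 C\delta_0^2+C(\delta_0)\varepsilon$, and I would fix the parameters in this order: first choose $\delta_0$ small so that $\tfrac12 C\delta_0^2<\tfrac12\eta$, then choose $\varepsilon$ small (depending on $\delta_0$) so that $C(\delta_0)\varepsilon<\tfrac12\eta$, and finally choose $\beta$ small. This makes the total variation at most $\eta$ and forces $t_1(x)>\frac{2}{n+3}$ on $[0,1]$, contradicting $t_1(x_2)<\frac{2}{n+3}$; hence $y_2'$ has no zero on $(0,1)$ and $t_1$ is monotone.

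The main obstacle is precisely the degeneration of the Weyl-based estimate at $x=0$. It is overcome by observing that the boundary condition $y_2'(0)=0$ makes $y_2'=O(x)$ near the origin, so the short interval $[0,\delta_0]$ contributes only $O(\delta_0^2)$ to the total variation, uniformly in $\varepsilon$, while the complementary interval $[\delta_0,1]$ is handled by the Weyl smallness alone. The order of the three choices $\delta_0\to\varepsilon\to\beta$ is what makes the estimate uniform over all admissible solutions and initial data.
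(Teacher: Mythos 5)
Your proposal is correct and takes essentially the same route as the paper's own proof: the paper likewise splits $[0,1]$ at a small point, uses the $\varepsilon$-independent a priori bound $|y_2'|\leq C_1x$ of Lemma \ref{lem_Spnuniformests301} near $x=0$ and the Weyl-smallness bound of Lemma \ref{lem_SpnyiboundrightGB} on the remaining interval to keep $t_1$ above $\frac{2}{n+3}$, and then contradicts the value forced by Lemma \ref{lem_monotonicitySp101} at a local minimum of $t_1$ (you use the equivalent statement at the largest zero of $y_2'$). The only cosmetic differences are that the paper controls $|y_2(x_0)|$ by integrating $y_2'$ back from $x=1$ instead of bounding the total variation over all of $[0,1]$, and that its split point $x_0$ is chosen in terms of $t_1(0)$ rather than fixed first as a universal constant.
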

\begin{proof}
Assume that
\begin{align*}
&1-t_1(0)\leq \beta,\\
&\sup_M|W|_g\leq \varepsilon,
\end{align*}
with $\beta<1-\frac{2}{n+3}$ and $\varepsilon>0$ to be determined. 
By $(\ref{ineqn_Spny1234leftbds})$, there exists $C_1>0$ independent of $\beta$ and $\varepsilon$ such that
\begin{align*}
|t_1'|\leq C_1x,
\end{align*}
for $x\in[0,\frac{1}{2}]$, 
and hence,
\begin{align*}
|t_1(x)-t_1(0)|\leq \frac{1}{2}C_1x^2
\end{align*}
for $x\in(0,\frac{1}{2})$. Therefore,
\begin{align*}
t_1(x)\geq \frac{2}{n+3}
\end{align*}
 for
 \begin{align*}
 x\leq x_0\equiv\big(\frac{2|t_1(0)-\frac{2}{n+3}|}{C_1}\big)^{\frac{1}{2}}.
 \end{align*}
Using the bound $\sup_M|W|_g\leq \varepsilon$ and $(\ref{ineqn_Spnyd1right302})$, we have that
\begin{align*}
|y_2(x_0)|\leq C_2 \varepsilon \int_{x_0}^1(1-s^2)ds= C_2(1-x_0-\frac{1}{3}+\frac{1}{3}x_0^3)\varepsilon.
\end{align*}
for some constant $C_2=C_2(x_0)>0$, and hence, if
\begin{align*}
\varepsilon< -\frac{1}{C_2(1-x_0-\frac{1}{3}+\frac{1}{3}x_0^3)}\ln(\frac{2}{n+3}),
\end{align*}
using the estimate $(\ref{ineqn_Spn1y2lowerbd})$ of the local minimum of $t_1$, we have that there is no minimum point of $t_1$ on $x\in(0,1)$. Therefore, by the boundary value condition $(\ref{equn_Spn1BV01})$, $y_2'$ has no zero on $x\in(0,1)$. This completes the proof of the lemma.

\end{proof}

We assume that the boundary value problem $(\ref{equn_Spn1Einstein01})-(\ref{equn_Spn1BV01})$ admits two solutions $(y_{11}, y_{12})$ and $(y_{21}, y_{22})$ with $y_{11}=\log(K_1),\,y_{12}=\log(t_{11}),\,y_{21}=\log(K_2)\,$ and $y_{22}=\log(t_{21})$, for $t_1(0)\neq 1$ close to $1$. Let $z_i=y_{1i}-y_{2i}$, for $i=1,2$. By the same argument in Lemma 5.3 in \cite{Li}, we have 
\begin{lem}\label{lem_zeroz1z2}
Under the condition in Lemma \ref{lem_Spn1monotonicityt1}, for any two zeroes $0<x_1<x_2\leq 1$ of $z_1'$ so that there is no zero of $z_1'$ on the interval $x\in(x_1,x_2)$, there exists a point $x_3\in(x_1,x_2)$ so that
\begin{align}\label{inequn_signz201}
(y_{12}'+y_{22}')z_1'z_2'\big|_{x=x_3}<0.
\end{align}
Also, for any zero $0<x_2\leq 1$ of $z_1'$, there exists $\varepsilon_1>0$ so that for any $x_2-\varepsilon_1 < x <x_2$, we have
\begin{align}\label{inequn_signz202}
(y_{12}'(x)+y_{22}'(x))z_1'(x)z_2'(x)>0.
\end{align}
\end{lem}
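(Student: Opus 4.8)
The plan is to follow the scheme of Lemma 5.3 in \cite{Li}: derive the linear second-order equation for $z_1$, write it in divergence form with a positive integrating factor, and then extract the two sign statements, $(\ref{inequn_signz201})$ from an integral identity and $(\ref{inequn_signz202})$ from a local analytic expansion. Subtracting $(\ref{equn_Spn1Einstein01})$ for the two solutions and using $a^2-b^2=(a+b)(a-b)$ gives
\begin{align*}
z_1''-x^{-1}(1+3x^2)(1-x^2)^{-1}z_1'+\frac{1}{2n}(y_{11}'+y_{21}')z_1'+\frac{3(n-3)}{2n}(y_{12}'+y_{22}')z_2'=0.
\end{align*}
Since $\int x^{-1}(1+3x^2)(1-x^2)^{-1}\,dx=\log\big(x(1-x^2)^{-2}\big)$ and $\frac{1}{2n}(y_{11}+y_{21})=\frac{1}{2n}\log(K_1K_2)$, the integrating factor is $\mu=x^{-1}(1-x^2)^2(K_1K_2)^{\frac{1}{2n}}>0$, and the equation becomes
\begin{align*}
(\mu z_1')'+\frac{3(n-3)}{2n}\mu\,(y_{12}'+y_{22}')z_2'=0.
\end{align*}
A structural remark I will use throughout: under the hypotheses of Lemma \ref{lem_Spn1monotonicityt1} both $t_{11}$ and $t_{21}$ are strictly monotone on $(0,1)$, and as they share the same boundary data $t_1(0)\neq1$, $t_1(1)=1$ they are monotone in the \emph{same} direction; hence $y_{12}'+y_{22}'$ is of one constant sign and nowhere zero on $(0,1)$. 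I assume the two solutions are distinct, so $z_2\not\equiv0$.

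For $(\ref{inequn_signz201})$ I integrate the divergence form over $[x_1,x_2]$. Because $z_1'(x_1)=z_1'(x_2)=0$ the boundary terms vanish and $\int_{x_1}^{x_2}\mu(y_{12}'+y_{22}')z_2'\,dx=0$. As $\mu>0$ and the integrand is analytic and not identically zero (otherwise $z_2'\equiv0$, forcing $z_2\equiv0$ and then, via $(\mu z_1')'=0$ with $\mu z_1'\to0$ at $x=1$, also $z_1\equiv0$, so the solutions coincide), the product $(y_{12}'+y_{22}')z_2'$ takes both signs on $(x_1,x_2)$. Since $z_1'$ has a fixed sign there, I pick $x_3$ where $(y_{12}'+y_{22}')z_2'$ carries the opposite sign; then $(y_{12}'+y_{22}')z_1'z_2'\big|_{x_3}<0$.

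For $(\ref{inequn_signz202})$, let $x_2\in(0,1]$ be a zero of $z_1'$ and work just to its left, writing $z_1'\sim c_1(x-x_2)^p$ and $z_2'\sim c_2(x-x_2)^q$ by analyticity, with $c_1,c_2\neq0$ and $p\geq1$. In the interior case $x_2\in(0,1)$ one has $(y_{12}'+y_{22}')(x_2)\neq0$, and balancing leading orders in the divergence form forces $q=p-1$ and $c_1p=-\frac{3(n-3)}{2n}(y_{12}'+y_{22}')(x_2)\,c_2$; substituting gives
\begin{align*}
(y_{12}'+y_{22}')z_1'z_2'\sim(y_{12}'+y_{22}')(x_2)\,c_1c_2\,(x-x_2)^{2p-1}=-\frac{2np}{3(n-3)}\,c_1^2\,(x-x_2)^{2p-1},
\end{align*}
which is positive for $x<x_2$ since $2p-1$ is odd. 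I expect the \textbf{main obstacle} to be the endpoint $x_2=1$, where both $y_{12}'+y_{22}'$ and $z_2'$ vanish and the coefficients are singular. Here I would use the behaviour at the center established in Lemma \ref{lem_SpnyiboundrightGB}, namely $y_{\cdot1}'\sim a_\cdot(1-x)^3$ and $y_{\cdot2}'\sim c_\cdot(1-x)$ as $x\to1$, so near $x=1$,
\begin{align*}
\mu\sim4(1-x)^2,\qquad z_1'\sim(a_1-a_2)(1-x)^3,\qquad z_2'\sim(c_1-c_2)(1-x),\qquad y_{12}'+y_{22}'\sim(c_1+c_2)(1-x).
\end{align*}
Matching the $(1-x)^4$ coefficients in $(\mu z_1')'=-\frac{3(n-3)}{2n}\mu(y_{12}'+y_{22}')z_2'$ (equivalently, reading the leading balance of $(\ref{equn_Spn1Einstein01})$ itself) yields $a_1-a_2=\frac{3(n-3)}{10n}(c_1-c_2)(c_1+c_2)$, whence
\begin{align*}
(y_{12}'+y_{22}')z_1'z_2'\sim\frac{3(n-3)}{10n}\,(c_1+c_2)^2(c_1-c_2)^2\,(1-x)^5>0
\end{align*}
for $x<1$. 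If the displayed leading orders degenerate (e.g. $c_1=c_2$), analyticity together with the same matching propagates the coincidence to all orders, forcing $z_2\equiv0$ and then $z_1\equiv0$; hence for distinct solutions the argument applies at the first order of non-vanishing, completing the proof.
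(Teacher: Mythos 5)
Your derivation of the difference equation and its divergence form $(\mu z_1')'+\tfrac{3(n-3)}{2n}\mu\,(y_{12}'+y_{22}')z_2'=0$ with $\mu=x^{-1}(1-x^2)^2(K_1K_2)^{\frac{1}{2n}}$ is exactly right, and your proof of $(\ref{inequn_signz201})$ — integrate over $[x_1,x_2]$, kill the boundary terms using $z_1'(x_1)=z_1'(x_2)=0$ (together with $\mu(1)=0$ when $x_2=1$), and conclude that $(y_{12}'+y_{22}')z_2'$ must change sign on $(x_1,x_2)$ since $z_1'$ does not — is the same argument the paper invokes (Lemma 5.3 of \cite{Li}). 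Your structural remark that $y_{12}'$ and $y_{22}'$ share one fixed sign on $(0,1)$ (both $t_{11}$ and $t_{21}$ run monotonically from $t_1(0)$ to $1$ under the hypotheses of Lemma \ref{lem_Spn1monotonicityt1}) is correct and legitimately used.

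For $(\ref{inequn_signz202})$ you depart from the paper's route. The argument of Lemma 5.3 in \cite{Li} is to integrate the same identity over $[x,x_2]$: since $\mu(x_2)z_1'(x_2)=0$, one gets $\mu(x)z_1'(x)=\tfrac{3(n-3)}{2n}\int_x^{x_2}\mu\,(y_{12}'+y_{22}')z_2'\,ds$, so just to the left of $x_2$ the sign of $z_1'$ coincides with the (constant, by discreteness of zeroes) sign of $(y_{12}'+y_{22}')z_2'$, which is $(\ref{inequn_signz202})$ at once — for interior zeroes and for $x_2=1$ alike. Your Taylor-matching gives the same conclusion correctly for interior $x_2$, but at $x_2=1$ your argument as written has a genuine gap: Lemma \ref{lem_SpnyiboundrightGB} provides only the upper bounds $(\ref{ineqn_Spnyd1right301})$--$(\ref{ineqn_Spnyd1right302})$, not exact asymptotics $y_{j1}'\sim a_j(1-x)^3$, $y_{j2}'\sim c_j(1-x)$ with nonzero displayed coefficients; and your fallback — that degeneracy of the leading coefficients (e.g.\ $c_1=c_2$) ``propagates to all orders, forcing $z_2\equiv 0$'' — is unsupported: $c_1=c_2$ only says $z_2'$ vanishes to order at least $2$ at $x=1$, and nothing in your matching forces the subsequent coefficients to agree. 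The gap is repairable inside your own scheme: write $z_1'\sim A(1-x)^a$, $z_2'\sim B(1-x)^b$, $y_{12}'+y_{22}'\sim D(1-x)^d$ with $A,B,D\neq 0$ and finite orders $a,b,d$ (these exist by analyticity at the center, the same fact underlying the paper's repeated use of discreteness of zeroes on $[0,1]$); matching then gives $a=b+d+1$ and $A(a+2)=\tfrac{3(n-3)}{2n}DB$, whence $(y_{12}'+y_{22}')z_1'z_2'\sim \tfrac{3(n-3)}{2n(a+2)}D^2B^2(1-x)^{a+b+d}>0$ for $x<1$, with no need to identify the orders. Alternatively, and more simply, use the integral identity above, which handles both cases uniformly and avoids expansions altogether.
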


It is clear that the function
\begin{align*}
t_1^{\frac{3}{n}}[(n+3)t_1-n-5+ 2t_1^{-1}]
\end{align*}
of $t_1$ is increasing on the interval $t_1\in(t_1^0, +\infty)$ with
\begin{align*}
t_1^0\equiv \frac{3n+15+\sqrt{(3n+15)^2+4(2n-6)(n+3)^2}}{2(n+3)^2}<1.
\end{align*}
 By the same proof of Theorem 5.4 in \cite{Li}, with the integrating factor $x^{-2}(1-x^2)^3$ in $(5.13)$ in \cite{Li} replaced by $x^{1-n}(1-x^2)^n$, we have the following uniqueness lemma for the boundary value problem $(\ref{equn_Spn1Einstein01})-(\ref{equn_Spn1BV01})$. ( Notice that Theorem 5.2 in \cite{Li} is not necessary for the uniqueness argument. Also, for $K_1(0)=K_2(0)$, by the mean value theorem, there exists a zero of $z_1'$ in $x\in(0,1)$, and Theorem 5.4 in \cite{Li} covers this case.)
\begin{lem}\label{lem_Spn1metricStability}
The solution to the boundary value problem $(\ref{equn_Spn1Einstein01})-(\ref{equn_Spn1BV01})$, with $y_1$ and $y_2$ monotone on $x\in(0,1)$ and $t_1(0)>t_1^0$, must be unique if it exists.
\end{lem}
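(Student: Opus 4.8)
The plan is to argue by contradiction, transcribing the proof of Theorem 5.4 in \cite{Li} with the only change being the integrating factor recorded in the statement. Suppose $(y_{11},y_{12})$ and $(y_{21},y_{22})$ are two solutions of $(\ref{equn_Spn1Einstein01})-(\ref{equn_Spn1BV01})$, both with $y_1,y_2$ monotone on $(0,1)$ and with $t_1(0)>t_1^0$, and set $z_i=y_{1i}-y_{2i}$ for $i=1,2$. The boundary data in $(\ref{equn_Spn1BV01})$ give $z_i'(0)=z_i'(1)=0$, and since $K(1)=t_1(1)=1$ for both solutions also $z_i(1)=0$; these are precisely the endpoint conditions exploited in \cite{Li}. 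As $z_1,z_2$ are analytic on $(0,1)$, the zeros of $z_1'$ and $z_2'$ are isolated, so $[0,1]$ splits into finitely many subintervals of monotonicity on which the integration below is performed.

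First I would make the hypothesis $t_1(0)>t_1^0$ usable. Since $t_1$ is monotone with $t_1(1)=1$ and $t_1^0<1$, the values of $t_1$ on $[0,1]$ lie between $t_1(0)$ and $1$, so $t_1(x)>t_1^0$ throughout for each solution. Hence the function $G(s)=s^{\frac{3}{n}}\big[(n+3)s-n-5+2s^{-1}\big]$, which is strictly increasing on $(t_1^0,+\infty)$ as noted just above the statement, is increasing along the entire range of $t_1$. This is what lets me read off the sign of the $t_1$-dependent part of the nonlinear term from the sign of $z_2$.

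Next I would subtract the two copies of $(\ref{equn_Spn1Einstein03})$. Writing the quadratic term as $\tfrac12(y_{11}'z_2'+y_{22}'z_1')$ and abbreviating the zeroth order term by $F=K^{-\frac1n}G(t_1)$, the difference becomes
\begin{align*}
z_2''+\Big(\tfrac12 y_{11}'-\tfrac{n-1+(n+1)x^2}{x(1-x^2)}\Big)z_2'=-\tfrac12 y_{22}'z_1'+8(1-x^2)^{-2}(F_1-F_2).
\end{align*}
Multiplying by the integrating factor $K_1^{\frac12}x^{1-n}(1-x^2)^n$ (the analogue of the factor $x^{-2}(1-x^2)^3$ in $(5.13)$ of \cite{Li}) turns the left side into $\big(K_1^{\frac12}x^{1-n}(1-x^2)^n z_2'\big)'$, and integrating over a subinterval bounded by two consecutive zeros of $z_2'$ annihilates the boundary terms, yielding a balance between the integral of the coupling term $y_{22}'z_1'$ and the integral of $F_1-F_2$. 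The same manipulation on $(\ref{equn_Spn1Einstein01})$ supplies the companion relation behind Lemma \ref{lem_zeroz1z2}, which fixes the sign of $(y_{12}'+y_{22}')z_1'z_2'$ near and between the zeros of $z_1'$.

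The heart of the argument, and the step I expect to be the main obstacle, is the sign analysis of
\begin{align*}
F_1-F_2=K_1^{-\frac1n}\big(G(t_{11})-G(t_{21})\big)+\big(K_1^{-\frac1n}-K_2^{-\frac1n}\big)G(t_{21}),
\end{align*}
which couples $z_1$ and $z_2$ through both the prefactor $K^{-\frac1n}=e^{-y_1/n}$ and the nonlinearity $G$, so that its sign is not dictated by $z_2$ alone. The first summand has the sign of $z_2$ by the monotonicity of $G$ established above, while the second is controlled by $z_1$. Feeding this decomposition into the balance identities, and invoking Lemma \ref{lem_zeroz1z2} together with the monotonicity of $y_1$ and $y_2$, I would reproduce the inductive sign contradiction of \cite{Li}: on the subinterval abutting $x=1$ the endpoint data $z_i(1)=z_i'(1)=0$ force the relevant integrals to vanish, and propagating this back across successive subintervals of monotonicity forces $z_2\equiv0$, whence $(\ref{equn_Spn1Einstein01})$ gives $z_1\equiv0$. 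This contradicts the assumption of two distinct solutions and proves uniqueness whenever a solution exists.
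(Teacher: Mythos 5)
Your proposal is correct and follows essentially the same route as the paper, whose own proof is precisely the citation you transcribe: run the contradiction argument of Theorem 5.4 in \cite{Li} for the differences $z_i=y_{1i}-y_{2i}$, with the integrating factor replaced by $K_1^{\frac12}x^{1-n}(1-x^2)^n$, using Lemma \ref{lem_zeroz1z2} for the sign of $(y_{12}'+y_{22}')z_1'z_2'$ and the monotonicity of $t_1^{\frac{3}{n}}\big[(n+3)t_1-n-5+2t_1^{-1}\big]$ on $(t_1^0,+\infty)$ — which is exactly how the hypotheses $t_1(0)>t_1^0$ and the monotonicity of $y_1,y_2$ enter. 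Your decomposition of $F_1-F_2$ and the final reduction $z_2\equiv 0\Rightarrow z_1\equiv 0$ via $(\ref{equn_Spn1Einstein01})$ are consistent with that argument.
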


Now we prove the global uniqueness of the conformally compact Einstein metric with $\text{Sp}(k+1)\times\text{Sp}(1)$ invariant conformal infinity.

\begin{thm}\label{thm_someSpn1metric}
Let $\hat{g}$ be a homogeneous metric on $\mathbb{S}^n\cong \text{Sp}(k+1)\times \text{Sp}(1)/\text{Sp}(k)\times\text{Sp}(1)$ with $n=4k+3$ for $k\geq 1$ so that $\hat{g}$ has the standard diagonal form
\begin{align}\label{eqn_Spn1standardmetricform}
\hat{g}=\lambda_1(\sigma_1^2+\sigma_2^2+\sigma_3^2)+\lambda_2(\sigma_4^2+..+\sigma_n^2),
\end{align}
at a point $p\in \mathbb{S}^n$, where $\lambda_1$ and $\lambda_2$ are two positive constants and $\sigma_1,..,\sigma_n$ are the $1$-forms with respect to the basis vectors in $\mathfrak{p}$, in the $\text{Ad}_{Sp(k)}$-invariant splitting $sp(k+1)=sp(k)\oplus \mathfrak{p}$. Assume that $\frac{\lambda_1}{\lambda_2}$ is close enough to $1$, then up to isometry the conformally compact Einstein metric filled in is unique and it is the perturbation metric in \cite{GL} on the $(n+1)$-ball $B_1(0)$ with $(\mathbb{S}^n, [\hat{g}])$ as its conformal infinity. 
\end{thm}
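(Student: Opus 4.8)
The plan is to follow the template of the generalized Berger and $\text{SU}(k+1)$-invariant cases in \cite{Li2}, reducing the geometric uniqueness to the uniqueness of the two-point boundary value problem $(\ref{equn_Spn1Einstein01})-(\ref{equn_Spn1BV01})$. When $\lambda_1=\lambda_2$ the conformal infinity is the round sphere and the CCE filling is the hyperbolic metric by \cite{Andersson-Dahl}\cite{Q}\cite{DJ}\cite{LiQingShi}, so I would assume $\lambda_1\neq\lambda_2$ with $\frac{\lambda_1}{\lambda_2}$ close to $1$. First I would observe that $\frac{\lambda_1}{\lambda_2}\to 1$ forces $\hat{g}$ to converge, up to scale, to the round metric, so by scale invariance and continuity of the Yamabe functional the constant $Y(\mathbb{S}^n,[\hat{g}])$ is as close to $Y(\mathbb{S}^n,[g_{\mathbb{S}}])$ as we wish. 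Hence by Theorem \ref{EHBoundary} any CCE metric $(M,g)$ with conformal infinity $(\mathbb{S}^n,[\hat{g}])$ satisfies $|K[g]+1|\leq\epsilon$ for all sectional curvatures, so $g$ is negatively curved and $M$ is simply connected. This legitimizes the center-of-gravity construction of Section \ref{Sect:preliminary} (see \cite{Li}): there is a unique center of gravity $p_0$, all Killing fields of $(\mathbb{S}^n,\hat{g})$ extend to $(M,g)$, the geodesic spheres about $p_0$ are invariant, and along the geodesic $\gamma$ from $p_0$ to a boundary point $q$ the metric takes the form $(\ref{equn_splittingmetric01})$. Because $\lambda_1=\lambda_2=\lambda_3$, the symmetry extension gives $y_2=y_3=y_4$, and the Einstein equations become exactly the system $(\ref{equn_Spn1Einstein01})-(\ref{equn_Spn1BV01})$ with $t_1(0)=\frac{\lambda_1}{\lambda_2}$.

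The decisive new ingredient, which distinguishes this case from the $\text{SU}(k+1)$ case where monotonicity was automatic, is that the pinching $|K[g]+1|\leq\epsilon$ also controls the Weyl tensor. Since the full curvature tensor of constant sectional curvature $-1$ is Weyl-flat, the deviation $|K[g]+1|\leq\epsilon$ of all sectional curvatures bounds $\sup_M|W|_g\leq C(n)\,\epsilon$. Thus, by choosing $\frac{\lambda_1}{\lambda_2}$ close enough to $1$, I can guarantee $\sup_M|W|_g\leq\varepsilon$ for the constant $\varepsilon>0$ of Lemma \ref{lem_Spn1monotonicityt1}. At the same time $t_1(0)=\frac{\lambda_1}{\lambda_2}$ is close to $1$, so both $1-t_1(0)\leq\beta$ hold (trivially when $t_1(0)>1$, and by closeness when $t_1(0)<1$) and $t_1(0)>t_1^0$ hold, since $t_1^0<1$. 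Lemma \ref{lem_Spn1monotonicityt1} then yields that $y_2'$ has no zero on $(0,1)$, i.e. $y_2$ is monotone, while $y_1$ is monotone by Lemma \ref{lem_monotonicitySp101}.

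With both $y_1$ and $y_2$ monotone and $t_1(0)>t_1^0$, Lemma \ref{lem_Spn1metricStability} gives the uniqueness of the solution to $(\ref{equn_Spn1Einstein01})-(\ref{equn_Spn1BV01})$. Since this solution determines the metric $g$ along every geodesic emanating from $p_0$, through the functions $K$ and $t_1$ hence the profile $\bar{h}$ in $(\ref{equn_splittingmetric01})$, and since the boundary data and the geodesic spheres are homogeneous and invariant, the CCE metric is unique up to isometry. Finally, the Graham--Lee perturbation metric of \cite{GL} exists for $\hat{g}$ close to the round metric and is itself such a CCE filling; by the uniqueness just established the CCE metric filled in must coincide with it. The main obstacle here is precisely the step guaranteeing the smallness of $\sup_M|W|_g$: unlike the $\text{SU}(k+1)$ case, the monotonicity of $y_2$ genuinely fails without it, so one must carefully combine the curvature pinching of Theorem \ref{EHBoundary} with the closeness of $\hat{g}$ to the round metric to secure the hypothesis $\sup_M|W|_g\leq\varepsilon$ of Lemma \ref{lem_Spn1monotonicityt1}.
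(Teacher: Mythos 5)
Your proposal is correct and follows essentially the same route as the paper: reduce to the boundary value problem $(\ref{equn_Spn1Einstein01})$--$(\ref{equn_Spn1BV01})$ via the center-of-gravity and symmetry-extension construction, use the curvature pinching of \cite{LiQingShi} (Theorem \ref{EHBoundary}) to get negative curvature, simple connectivity, and smallness of $\sup_M|W|_g$, then apply Lemma \ref{lem_Spn1monotonicityt1} for monotonicity of $y_2$ and Lemma \ref{lem_Spn1metricStability} for uniqueness of the BVP solution. The only difference is that you spell out explicitly the continuity of the Yamabe constant and the passage from sectional-curvature pinching to the Weyl bound $\sup_M|W|_g\leq C(n)\epsilon$, which the paper leaves implicit in its citation of \cite{LiQingShi}.
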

\begin{proof}[Proof of Theorem \ref{thm_someSpn1metric}]
For the case $t_1(0)=1$ i.e., $\lambda_1=\lambda_2$ so that the conformal infinity is the round sphere, the theorem has been proved in \cite{Andersson-Dahl}\cite{Q}\cite{DJ}\cite{LiQingShi}.

Now we assume that $\lambda_1\neq \lambda_2.\,$ By Theorem \ref{EHBoundary} and the continuity of the Yamabe constant, for $\frac{\lambda_1}{\lambda_2}$ close enough to $1$, the conformally compact Einstein manifold $(M, g)$ filled in is non-positively curved and simply connected and the closure $\overline{M}=M\bigcup \partial M$ of $M$ is diffeomorphic to the unit ball $\overline{B}_1\subset \mathbb{R}^{n+1}$. Moreover, the sectional curvature of $g$ is close to $-1$ and hence $\sup_M|W|_g$ is small. Thus, the condition in Lemma \ref{lem_Spn1monotonicityt1} is satisfied.

Pick up a point $q\in \partial M=\mathbb{S}^n$. Let $x$ be the geodesic defining function about $C\hat{g}$ with $C>0$ some constant so that $x=e^{-r}$ with $r$ the distance function on $(M,g)$ to the center of gravity $p_0\in M$, see Theorem 3.6 in \cite{Li}.  Under the polar coordinate $(x, \theta)$ with $0\leq x\leq 1$ and $\theta=0$ along the geodesic $\gamma$ connecting $q$ and $p_0$, by \cite{Li2} and Section \ref{Sect:preliminary} we have that  the Einstein equations with prescribed conformal infinity $(\mathbb{S}^n, [\hat{g}])$ with $\hat{g}$ the homogeneous metric in $(\ref{eqn_Spn1standardmetricform})$, is equivalent to the boundary value problem $(\ref{equn_Spn1Einstein01})-(\ref{equn_Spn1BV01})$ along the geodesic $\gamma$ provided that the solution has non-positive sectional curvature. Moreover, since $t_1(0)$ is close to $1$ and the condition in Lemma \ref{lem_Spn1monotonicityt1} is satisfied, by Lemma \ref{lem_Spn1monotonicityt1}, $y_1$ and $y_2$ are monotone on $x\in(0,1)$. Moreover, Lemma \ref{lem_zeroz1z2} holds. Then by Lemma \ref{lem_Spn1metricStability}, up to isometries, the CCE metric is unique.
\end{proof}

\section{On the monotonicity of the solution to the boundary value problem $(\ref{equn_Spn2Einstein01})-(\ref{equn_Spn2BV01})$}\label{section5}

\begin{lem}\label{lem_monotonicitySp201}
Let $(y_1,y_2,y_3)$ be a global solution of the boundary value problem $(\ref{equn_Spn2Einstein01})-(\ref{equn_Spn2BV01})$. Then we have $y_1'(x)>0$ for $x\in(0,1)$. Define the function $t_1^*(x)$ as
\begin{align}\label{equn_Spn2t1star01}
t_1^*(x)\equiv \frac{(n+5)t_2^2(x)-4t_2^3(x)}{(n-1)t_2^2(x)+2},
\end{align}
for $x\in[0,1]$. Then, at any local maximum point $p_1$ of $t_1$ on $x\in(0,1)$, we have
\begin{align}\label{ineqn_Spn2t1ubds1}
t_1(p_1)\leq t_1^*(p_1),
\end{align}
while at any local minimum point $q_1$ of $t_1$ on $(0,1)$, we have
\begin{align}\label{ineqn_Spn2t1lbds1}
t_1(q_1)\geq t_1^*(q_1).
\end{align}
Moreover,
at any local maximum point $p_2$ of $t_2$ on $(0,1)$, we have
\begin{align}\label{ineqn_Spn2t2ubds1}
t_2(p_2)\leq 1,
\end{align}
while at any local minimum point $q_2$ of $t_2$ on $(0,1)$, we have
\begin{align}\label{ineqn_Spn2t2lbds1}
t_2(q_2)< \frac{4}{n+1},\,\,\,\text{and}\,\,\,t_1(q_2)\leq \frac{4t_2-(n+1)t_2^2}{2t_2+2}.
\end{align}
\end{lem}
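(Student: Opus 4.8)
The plan is to read off the extremal behaviour of $t_1$ and $t_2$ directly from the scalar equations $(\ref{equn_Spn2Einstein03})$ and $(\ref{equn_Spn2Einstein04})$, evaluated at the critical points where the first-order terms drop out, and to couple the minimum estimate for $t_2$ to its maximum estimate through a global argument. For the monotonicity $y_1'>0$ I would argue exactly as in Lemma \ref{lem_monotonicitySpn01} (the content of Remark \ref{remarkmonotonicity}): multiplying $(\ref{equn_Spn2Einstein01})$ by $x^{-1}(1-x^2)^2$ puts it in the form $(x^{-1}(1-x^2)^2y_1')'+(\text{nonnegative})=0$, and if $x_1$ were the largest zero of $y_1'$ in $(0,1)$, integrating on $[x_1,1]$ and using that $(1-x^2)^2y_1'$ vanishes at $x=1$ forces the nonnegative integrand to vanish, so $y_1'\equiv 0$ on $[x_1,1]$ and hence everywhere by analyticity, contradicting $K(0)<1=K(1)$.

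Since $y_2=\log t_1$, at an interior critical point of $t_1$ one has $y_2'=0$, with $y_2''\le 0$ at a local maximum and $y_2''\ge 0$ at a local minimum. Setting $y_2'=0$ in $(\ref{equn_Spn2Einstein03})$ annihilates both the transport term and the $\tfrac12 y_1'y_2'$ term, leaving
\begin{align*}
y_2''=8(1-x^2)^{-2}(K^{-1}t_1t_2^2)^{\frac1n}\Big[(n-1)t_1+4t_2-n-5+\frac{2t_1}{t_2^2}\Big].
\end{align*}
The prefactor is positive, so the sign of the bracket equals that of $y_2''$; solving the bracket inequality for $t_1$ yields $t_1\le \frac{(n+5)t_2^2-4t_2^3}{(n-1)t_2^2+2}=t_1^*$ at a maximum and the reverse at a minimum, which is exactly $(\ref{ineqn_Spn2t1ubds1})$ and $(\ref{ineqn_Spn2t1lbds1})$.

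For $t_2$ I would proceed similarly: $y_3=\log t_2$, and setting $y_3'=0$ in $(\ref{equn_Spn2Einstein04})$ leaves $y_3''=8(1-x^2)^{-2}(K^{-1}t_1t_2^2)^{\frac1n}F$, where $F=(n+1)t_2+2t_1-n-5+\frac{2(2t_2-t_1)}{t_2^2}$. The key algebraic observation is that $t_2=1$ is a root of $t_2^2F$, so that
\begin{align*}
t_2^2F=(t_2-1)\,G(t_2),\qquad G(t_2)=(n+1)t_2^2+(2t_1-4)t_2+2t_1.
\end{align*}
Because $G$ is an upward parabola with vertex $\frac{2-t_1}{n+1}<1$ and $G(1)=n-3+4t_1>0$, it is increasing and positive on $[1,\infty)$. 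At a local maximum of $t_2$, $y_3''\le 0$ forces $(t_2-1)G(t_2)\le 0$, and since $G>0$ for $t_2\ge 1$ this gives $t_2\le 1$, proving $(\ref{ineqn_Spn2t2ubds1})$.

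The main obstacle is the minimum estimate. At a local minimum $q_2$, $y_3''\ge 0$ only gives $(t_2-1)G(t_2)\ge 0$, and since $G>0$ for $t_2\ge 1$ the pointwise analysis does \emph{not} by itself exclude $t_2(q_2)\ge 1$; this is the crux and is why the maximum bound must be proved first. I would rule out $t_2(q_2)\ge 1$ globally: if it held, then because $q_2$ is a minimum while $t_2(1)=1\le t_2(q_2)$, the maximum of $t_2$ over $[q_2,1]$ would be attained at some interior point of $(q_2,1)$ with value $>1$, contradicting $(\ref{ineqn_Spn2t2ubds1})$ (the borderline case $t_2(q_2)=1$ being excluded by analyticity unless $t_2\equiv 1$). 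Hence $t_2(q_2)<1$, so $(t_2-1)<0$ forces $G(t_2)\le 0$, which rearranges to $t_1\le \frac{4t_2-(n+1)t_2^2}{2t_2+2}$; since $t_1>0$ the numerator must be positive, i.e. $t_2<\frac{4}{n+1}$, giving $(\ref{ineqn_Spn2t2lbds1})$.
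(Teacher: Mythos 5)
Your proof is correct and takes essentially the same approach as the paper: $y_1'>0$ via the integration argument of Lemma \ref{lem_monotonicitySpn01}, the $t_1$ bounds by evaluating $(\ref{equn_Spn2Einstein03})$ at critical points of $t_1$, and the $t_2$ bounds via the factorization of the bracket in $(\ref{equn_Spn2Einstein04})$ as $t_2^{-2}(t_2-1)\left[2(1+t_2)t_1+(n+1)t_2^2-4t_2\right]$ combined with the global observation that a local minimum value $\geq 1$ would force an interior maximum $>1$, contradicting $(\ref{ineqn_Spn2t2ubds1})$. The paper compresses that last step into one line (``Since $t_2(1)=1$, by $(\ref{ineqn_Spn2t2ubds1})$, we have that $t_2(q_2)<1$''), but its reasoning is exactly the global exclusion argument you spell out.
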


\begin{proof}
The proof of the inequality $y_1'(x)>0$ on $x\in(0,1)$ is the same as Lemma \ref{lem_monotonicitySpn01}.

It is clear that $t_1^*(x)\leq 1$ for $x\in[0,1]$ by the definition of $t_1^{*}(x)$, with $t_1^*(x)=1$ if and only if $t_2(x)=1$. At any local maximum (resp. minimum) point $p_1$ (resp. $q_1$) of $t_1$ on $(0,1)$, we have $y_2'=0$ and $y_2''(p_1)\leq 0$ (resp. $y_2''(q_1) \geq 0$), which immediately yields $(\ref{ineqn_Spn2t1ubds1})$ and $(\ref{ineqn_Spn2t1lbds1})$, by $(\ref{equn_Spn2Einstein03})$.

By $(\ref{equn_Spn2Einstein04})$, at a local maximum point $p_2$ of $t_2$ on $(0,1)$, we have
\begin{align*}
y_3''(p_2)=8 (1-x^2)^{-2} (K^{-1}t_1t_2^2)^{\frac{1}{n}}t_2^{-2}(t_2-1)[2(1+t_2)t_1+ (n+1)t_2^2-4t_2]\big|_{x=p_2}\leq 0,
\end{align*}
and hence $t_2(p_2)\leq 1$; while at a local minimum point $q_2$ of $t_2$ on $(0,1)$, we have
\begin{align}\label{ineqn_Spn2minimumt21}
(t_2-1)[2(1+t_2)t_1+ (n+1)t_2^2-4t_2]\big|_{x=q_2}\geq 0.
\end{align}
Since $t_2(1)=1$, by $(\ref{ineqn_Spn2t2ubds1})$, we have that $t_2(q_2)< 1$. Therefore, by $(\ref{ineqn_Spn2minimumt21})$, we obtain $(\ref{ineqn_Spn2t2lbds1})$.

\end{proof}

Now we show that for $\sup_M |W|_g$ small and $t_2(0)$ not too small, $t_2$ is monotone on $x\in(0,1)$.

\begin{lem}\label{lem_Spn2monotonicityt2}
There exists two constants $\varepsilon>0$ and $\beta>0$ such that if $\sup_M|W|_g\leq \varepsilon$, $t_2(0)\neq 1$ and $1-t_2(0)\leq \beta$, then $y_3'$ has no zero on $x\in(0,1)$. That is to say, $t_2$ is monotone on $x\in(0,1)$. Moreover, $t_1^*(x)$ (see $(\ref{equn_Spn2t1star01})$) is increasing and there is no local maximum point of $t_1$ on $x\in(0,1)$.
\end{lem}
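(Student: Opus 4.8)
The plan is to follow the scheme of Lemma \ref{lem_Spn1monotonicityt1} almost verbatim, with the threshold $\frac{2}{n+3}$ there replaced by the value $\frac{4}{n+1}$ that governs the local-minimum obstruction $(\ref{ineqn_Spn2t2lbds1})$ for $t_2$. Throughout I take $t_2(0)\in(1-\beta,1)$ (the case $t_2(0)>1$ being symmetric, with ``increasing''/``maximum'' replaced by ``decreasing''/``minimum''), and $\sup_M|W|_g\le\varepsilon$, with $\beta,\varepsilon>0$ fixed at the end. The central reduction is: if I can show $t_2(x)>\frac{4}{n+1}$ for every $x\in(0,1)$, then by $(\ref{ineqn_Spn2t2lbds1})$ the function $t_2$ can have no local minimum on $(0,1)$, and strict monotonicity of $t_2$ will follow.

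To prove $t_2>\frac{4}{n+1}$ I split $[0,1]$ into two regions exactly as in Lemma \ref{lem_Spn1monotonicityt1}. Near $x=0$ the a priori estimate $(\ref{ineqn_Spny1234leftbds})$ gives $|y_3'|\le Cx$, hence $|t_2(x)-t_2(0)|\le \frac12 C_1 x^2$ on $[0,\frac12]$, so that $t_2(x)\ge \frac{4}{n+1}$ for $x\le x_0:=\big(2C_1^{-1}(t_2(0)-\frac{4}{n+1})\big)^{1/2}$; since $t_2(0)\ge 1-\beta$ and $n=4k+3\ge 7$, the transition point $x_0$ is bounded below by a constant $\delta_0=\delta_0(\beta)>0$ independent of the solution. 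On $[x_0,1]\subset[\delta_0,1]$ the Weyl estimate $(\ref{ineqn_Spnyd1right302})$ applies, and since $y_3(1)=\log t_2(1)=0$, integrating $y_3'$ from $x$ to $1$ gives $|y_3(x)|\le C\varepsilon\int_{\delta_0}^1(1-s^2)\,ds$ uniformly on $[x_0,1]$; choosing $\varepsilon$ small enough that this bound is $<\log\frac{n+1}{4}$ forces $t_2=e^{y_3}>\frac{4}{n+1}$ there as well. Thus $t_2>\frac{4}{n+1}$ on all of $(0,1)$. I then conclude monotonicity by the largest-zero argument of Lemma \ref{lem_monotonicitySp101}: multiplying $(\ref{equn_Spn2Einstein04})$ by $K^{\frac12}x^{1-n}(1-x^2)^n$ puts it in divergence form with source a positive multiple of $t_2^{-2}(t_2-1)\big[2(1+t_2)t_1+(n+1)t_2^2-4t_2\big]$ (the factorization already used in Lemma \ref{lem_monotonicitySp201}). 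If $y_3'$ had a zero in $(0,1)$, let $x_3$ be the largest; integrating over $(x_3,1)$ annihilates both boundary terms because $y_3'(x_3)=y_3'(1)=0$ and $(1-x^2)^n\to0$, so the integral of the source vanishes. But on $(x_3,1)$ the bracket $2(1+t_2)t_1+(n+1)t_2^2-4t_2$ is strictly positive (as $t_2>\frac{4}{n+1}$ makes $(n+1)t_2-4>0$ and $t_1>0$), while $t_2-1$ keeps one sign there ($t_2$ being strictly monotone on $(x_3,1)$ with $t_2(1)=1$); hence the integrand has constant sign and the integral cannot vanish, a contradiction. Therefore $y_3'$ has no zero on $(0,1)$, and since $t_2(0)<1=t_2(1)$, $t_2$ is strictly increasing.

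For the remaining assertions, write $t_1^*=f(t_2)$ with $f(s)=\frac{(n+5)s^2-4s^3}{(n-1)s^2+2}$. A direct computation gives
\[
f'(s)=\frac{4s\,[(n+5)-6s-(n-1)s^3]}{\big((n-1)s^2+2\big)^2}.
\]
The cubic factor vanishes at $s=1$ and has derivative $-6-3(n-1)s^2<0$, so it is strictly positive on $(0,1)$; hence $f$ is strictly increasing there. Since $t_2$ is increasing in $x$ and takes values in $(\frac{4}{n+1},1)$ on $(0,1)$, the composite $t_1^*(x)=f(t_2(x))$ is increasing, as claimed. To rule out a local maximum of $t_1$, suppose $p_1$ is the rightmost one. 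If a local minimum $q_1\in(p_1,1)$ exists (take the first), then $(\ref{ineqn_Spn2t1ubds1})$, monotonicity of $t_1^*$, and $(\ref{ineqn_Spn2t1lbds1})$ yield $t_1(p_1)\le t_1^*(p_1)\le t_1^*(q_1)\le t_1(q_1)$, contradicting that $t_1$ strictly decreases from the maximum $p_1$ to the minimum $q_1$; if no such $q_1$ exists, $t_1$ is nonincreasing on $(p_1,1)$, so $t_1(1)\le t_1(p_1)\le t_1^*(p_1)<t_1^*(1)=1$, contradicting $t_1(1)=1$. Hence $t_1$ has no local maximum on $(0,1)$.

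The main obstacle is the quantitative estimate $t_2>\frac{4}{n+1}$: one must keep the transition point $x_0$ above a fixed $\delta_0>0$ so that the interior Weyl estimate $(\ref{ineqn_Spnyd1right302})$ applies with a uniform constant, and then fix the parameters in the correct order (first $\delta_0$ from $\beta$, then $\varepsilon$ from $\delta_0$). Once this quantitative control is in place, the sign argument closes part one exactly as in the $\text{Sp}(k+1)\times\text{Sp}(1)$ setting; the genuinely new ingredient relative to that case is the ordering argument of the last paragraph, which exploits the monotonicity of $t_1^*=f(t_2)$ together with the extremum inequalities of Lemma \ref{lem_monotonicitySp201}.
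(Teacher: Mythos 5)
Your proof is correct and takes essentially the same approach as the paper's: the identical two-region scheme (the a priori bound $|t_2'|\le C_1x$ near $x=0$ from $(\ref{ineqn_Spny1234leftbds})$, then the Weyl estimate $(\ref{ineqn_Spnyd1right302})$ on $[\delta_0,1]$ with the parameters fixed in the same order) to force $t_2>\frac{4}{n+1}$ on all of $(0,1)$, followed by the extremum criteria of Lemma~\ref{lem_monotonicitySp201} and the monotonicity of $t_1^*$ for the final assertions. Your two additions are refinements rather than a different route: the divergence-form integration over $(x_3,1)$ upgrades ``no local minimum of $t_2$'' to ``no zero of $y_3'$'' (a step the paper concludes tersely from the boundary conditions and $(\ref{ineqn_Spn2t2ubds1})$), and the explicit computation of $f'$ verifies what the paper only asserts as easy to check.
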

\begin{proof}
Assume that
\begin{align*}
&1-t_2(0)\leq \beta,\\
&\sup_M|W|_g\leq \varepsilon,
\end{align*}
with $\beta<1-\frac{4}{n+1}$ and $\varepsilon>0$ to be determined. 
By $(\ref{ineqn_Spny1234leftbds})$, there exists $C_1>0$ independent of $\beta$ and $\varepsilon$ such that
\begin{align*}
|t_2'|\leq C_1x,
\end{align*}
for $x\in[0,\frac{1}{2}]$, and hence,
\begin{align*}
|t_2(x)-t_2(0)|\leq \frac{1}{2}C_1x^2
\end{align*}
for $x\in(0,\frac{1}{2})$. Therefore,
\begin{align*}
t_2(x)\geq \frac{4}{n+1}
\end{align*}
 for
 \begin{align*}
 x\leq x_0\equiv\big(\frac{2|t_2(0)-\frac{4}{n+1}|}{C_1}\big)^{\frac{1}{2}}.
 \end{align*}
Using the bound $\sup_M|W|_g\leq \varepsilon$ and $(\ref{ineqn_Spnyd1right302})$, we have that
\begin{align*}
|y_3(x_0)|\leq C_2 \varepsilon \int_{x_0}^1(1-s^2)ds= C_2(1-x_0-\frac{1}{3}+\frac{1}{3}x_0^3)\varepsilon,
\end{align*}
for some constant $C_2=C_2(x_0)>0$, and hence, if
\begin{align*}
\varepsilon< -\frac{1}{C_2(1-x_0-\frac{1}{3}+\frac{1}{3}x_0^3)}\ln(\frac{4}{n+1}),
\end{align*}
using the estimate $(\ref{ineqn_Spn2t2lbds1})$ of the local minimum of $t_2$, we have that there is no minimum point of $t_2$ on $x\in(0,1)$. Therefore, by the boundary value condition $(\ref{equn_Spn1BV01})$ and $(\ref{ineqn_Spn2t2ubds1})$, $y_3'$ has no zero on $x\in(0,1)$.

Now we turn to the monotonicity of $y_2$. By the definition of $t_1^*$ in $(\ref{equn_Spn2t1star01})$, it is easy to check that for $t_2\in(\frac{2}{n+3},1)$, we have $t_1^*\in(t_2,1)$, while for $t_2>1$, we have $t_1^*<1$. Moreover, $t_1^*(x)$ is increasing when $t_2(x)<1$ and $t_2(x)$ is increasing, while $t_1^*(x)$ is increasing when $t_2(x)>1$ and $t_2(x)$ is decreasing. Hence by the choice of $\varepsilon$ and $\beta$ above, we obtain that $t_1^*(x)$ is increasing on $x\in(0,1)$.

Assume that there exists a local maximum point $p_1$ of $t_1$ on $x\in(0,1)$. Then by $(\ref{ineqn_Spn2t1ubds1})$, we have that
\begin{align*}
t_1(p_1)\leq t_1^*(p_1)<1.
\end{align*}
Since $t_1(1)=1$, there must be a local minimum $q_1$ of $t_1$ next to $p_1$ on the interval $x\in(p_1,1)$. Therefore, $t_1(q_1)<t_1(p_1)$ and by $(\ref{ineqn_Spn2t1lbds1})$, we have $t_1(q_1)\geq t_1^*(q_1)$, contradicting with the fact that $t_2^*(x)$ is increasing on $x\in(0,1)$. Therefore, there exists no local maximum point of $t_1$ on $x\in(0,1)$.

This completes the proof of the lemma.

\end{proof}

\begin{lem}\label{lem_Spn2monotonicityt1t22}
Under the condition in Lemma \ref{lem_Spn2monotonicityt2}, for $t_1(0)\leq t_1^*(0)$ with $t_1^*$ defined in $(\ref{equn_Spn2t1star01})$, we have that $y_2'$ has no zero and $t_1$ is increasing on $x\in(0,1)$; while for $t_1^*(0)<t_1(0)\leq 1$, $t_1$ is decreasing for $x>0$ small till the unique minimum point $q_1$ of $t_1$ on $x\in(0,1)$, and then $t_1$ keeps increasing on $x\in(q_1, 1)$; while for $t_1(0)>1$, $t_1$ is decreasing for $x>0$ small with at most one minimum point $q_1$ on $x\in(0,1)$. Moreover,
\begin{align}\label{ineqn_Spn2t1lbds2}
t_1(x)\geq\min\{t_1(0), t_1^*(0)\}
\end{align}
for $x\in[0,1]$.
\end{lem}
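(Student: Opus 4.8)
The plan is to reduce the whole statement to the sign behavior of one auxiliary function obtained from an integrating factor. Multiplying $(\ref{equn_Spn2Einstein03})$ by $\mu(x) = K^{1/2}x^{1-n}(1-x^2)^n$ puts it in divergence form, and since the bracket there factors as $(n-1)t_1 + 4t_2 - n - 5 + 2t_1 t_2^{-2} = \big((n-1) + 2t_2^{-2}\big)(t_1 - t_1^*)$ with $t_1^*$ as in $(\ref{equn_Spn2t1star01})$, I obtain $(\mu y_2')' = P(x)(t_1 - t_1^*)$, where $P(x) = 8\mu(1-x^2)^{-2}(K^{-1}t_1t_2^2)^{1/n}\big((n-1)+2t_2^{-2}\big) > 0$ on $(0,1)$. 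Writing $w = \mu y_2'$, this reads $\text{sign}(w') = \text{sign}(t_1 - t_1^*)$, while $\text{sign}(w) = \text{sign}(y_2') = \text{sign}(t_1')$ because $\mu, t_1 > 0$.

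Two inputs then drive the argument. First, a trapping principle: there is no $x_0 \in (0,1)$ with $t_1(x_0) < t_1^*(x_0)$ and $t_1'(x_0) < 0$. Indeed, at such a point $w(x_0) < 0$ and $w'(x_0) < 0$, and while $t_1 < t_1^*$ the function $w$ stays negative and decreasing, so $t_1$ keeps decreasing; since $t_1^*$ is increasing (Lemma \ref{lem_Spn2monotonicityt2}) the gap $t_1 - t_1^*$ only becomes more negative, so $t_1 < t_1^*$ and $t_1' < 0$ persist on $[x_0, 1)$, forcing $t_1(1) < t_1(x_0) < t_1^*(x_0) < 1$, against $t_1(1) = t_1^*(1) = 1$. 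Second, the initial direction: from $C^\infty$ regularity and $(\ref{ineqn_Spny1234leftbds})$ one has $y_2' = y_2''(0)x + o(x)$ near $x=0$, with $(2-n)y_2''(0) = 8(K(0)^{-1}t_1(0)t_2(0)^2)^{1/n}\big((n-1)+2t_2(0)^{-2}\big)(t_1(0) - t_1^*(0))$; since $2-n < 0$, the sign of $y_2''(0)$ is opposite to that of $t_1(0)-t_1^*(0)$, so $t_1$ increases near $0$ when $t_1(0) < t_1^*(0)$ and decreases near $0$ when $t_1(0) > t_1^*(0)$. In the borderline case $t_1(0) = t_1^*(0)$ the trapping principle forbids an initial decrease (it would give $t_1 < t_1(0) = t_1^*(0) < t_1^*$ with $t_1' < 0$), so $t_1$ again increases near $0$.

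The heart of the proof is a classification of the first zero $x_1 \in (0,1)$ of $y_2'$, on whose preceding interval $y_2'$ has the constant sign fixed above, using that $t_1$ has no local maximum (Lemma \ref{lem_Spn2monotonicityt2}). If $t_1(x_1) \ne t_1^*(x_1)$, then $w$ crosses zero transversally, so the one-sided signs of $w$ (hence of $t_1'$) are $-\text{sign}(t_1(x_1)-t_1^*(x_1))$ on the left and $+\text{sign}(t_1(x_1)-t_1^*(x_1))$ on the right. Matching these against the known sign of $y_2'$ on $(0,x_1)$ forces: in Case $1$ (where $t_1'>0$ before $x_1$) a local maximum at $x_1$, which is excluded; in Cases $2$–$3$ (where $t_1'<0$ before $x_1$) a genuine local minimum, that is $q_1 = x_1$. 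The degenerate possibility $t_1(x_1) = t_1^*(x_1)$ is equivalent to $y_2''(x_1)=0$, and then $t_1'(x_1)=0$ gives $w''(x_1) = P(x_1)(t_1-t_1^*)'(x_1) = -P(x_1)(t_1^*)'(x_1) < 0$; so $w$ has a strict interior maximum equal to $0$ at $x_1$, whence $t_1'<0$ on both sides of $x_1$, contradicting $t_1'>0$ before $x_1$ in Case $1$ and, in Cases $2$–$3$, producing a point with $t_1 < t_1^*$, $t_1'<0$ excluded by the trapping principle. Applying the same classification to the right of $q_1$ (now $t_1'>0$) rules out any further zero, so $t_1$ is strictly increasing on $(q_1,1)$; in Case $1$ the argument run from $x=0$ shows $y_2'$ has no zero at all. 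The existence of $q_1$ in Case $2$ follows since $t_1(0)\le 1 = t_1(1)$ is incompatible with $t_1$ decreasing throughout, while in Case $3$ nothing forces a minimum, giving at most one.

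Finally, the lower bound $(\ref{ineqn_Spn2t1lbds2})$ follows from this structure together with $(\ref{ineqn_Spn2t1lbds1})$: in Case $1$, $t_1 \ge t_1(0) = \min\{t_1(0),t_1^*(0)\}$; in Cases $2$–$3$ the global minimum is attained either at $q_1$, where $t_1(q_1) \ge t_1^*(q_1) \ge t_1^*(0)$ because $t_1^*$ is increasing, or (Case $3$, no minimum) at $x=1$, where $t_1 = 1 > t_1^*(0)$. I expect the main obstacle to be the bookkeeping at the zeros of $y_2'$, specifically excluding degenerate zeros (horizontal inflections): this is exactly where the computation $w''(x_1) = -P(x_1)(t_1^*)'(x_1) < 0$ and the strict monotonicity $(t_1^*)' > 0$ on $(0,1)$ — which must be extracted from the proof of Lemma \ref{lem_Spn2monotonicityt2} — are needed, and where the borderline condition $t_1(0) = t_1^*(0)$ must be absorbed by the trapping principle rather than by the leading-order expansion.
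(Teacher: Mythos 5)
Your proof is correct, and it rests on the same three pillars as the paper's own argument: the factorization $(n-1)t_1+4t_2-n-5+2t_1t_2^{-2}=\big((n-1)+2t_2^{-2}\big)(t_1-t_1^*)$, the resulting sign of $y_2''(0)$, and the two conclusions of Lemma \ref{lem_Spn2monotonicityt2} (no local maximum of $t_1$, and $t_1^*$ increasing). The execution, however, is genuinely different. The paper's proof is a short counting argument: since $t_1$ has no local maximum it has at most one local minimum; the sign of $y_2''(0)$ fixes the behavior near $x=0$ in the three cases; the borderline case $t_1(0)=t_1^*(0)$ is disposed of by noting that an initial decrease would produce a minimum $q_1$ with $t_1(q_1)<t_1(0)=t_1^*(0)\leq t_1^*(q_1)$, contradicting $(\ref{ineqn_Spn2t1lbds1})$; and the bound $(\ref{ineqn_Spn2t1lbds2})$ follows from $(\ref{ineqn_Spn2t1lbds1})$ plus the monotonicity of $t_1^*$, exactly as in your last paragraph. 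You instead run everything through the divergence form $(\mu y_2')'=P\,(t_1-t_1^*)$ with $\mu=K^{1/2}x^{1-n}(1-x^2)^n$ together with a trapping principle. This costs more machinery but buys rigor at one point the paper passes over silently: degenerate zeros of $y_2'$, i.e.\ points where $y_2'(x_1)=y_2''(x_1)=0$, which by $(\ref{equn_Spn2Einstein03})$ can occur only where $t_1=t_1^*$. The paper tacitly identifies every zero of $y_2'$ with a local extremum of $t_1$, yet the lemma literally asserts that $y_2'$ has \emph{no zero} in the first case; your computation $w''(x_1)=-P(x_1)(t_1^*)'(x_1)<0$ is what actually excludes such points. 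You are also right to flag that the strict inequality $(t_1^*)'>0$ on $(0,1)$ must be extracted from the proof (not the statement) of Lemma \ref{lem_Spn2monotonicityt2}; it does hold there, since $dt_1^*/dt_2=4t_2\big[(n+5)-6t_2-(n-1)t_2^3\big]\big/\big((n-1)t_2^2+2\big)^2$ vanishes only at $t_2=1$, while $t_2\neq1$ and $t_2'\neq0$ on $(0,1)$ under that lemma. The one place where your argument is heavier than necessary is the trapping principle itself: it re-derives from the ODE what the paper obtains directly from the second-derivative test $(\ref{ineqn_Spn2t1lbds1})$ at an interior minimum.
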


\begin{proof}
By Lemma \ref{lem_Spn2monotonicityt2}, we have that $t_1^*(x)<1$ is increasing on $x\in(0,1)$, and $t_1$ has no local maximum point on $x\in(0,1)$, and therefore, there exists at most one local minimum point of $t_1$ on $x\in(0,1)$. We only need to consider the sign of $y_2'$ for $x>0$ small. By $(\ref{equn_Spn2Einstein03})$, we have
\begin{align*}
y_2''(0)=\,- \frac{8}{n-2}(K^{-1}t_1t_2^2)^{\frac{1}{n}}\big((n-1)+2t_2^{-2}(0)\big)[t_1(0)-t_1^*(0)].
\end{align*}
Therefore, $y_2'(x)<0$ for $x>0$ small when $t_1(0)>t_1^*(0)$, while $y_2'(x)>0$ for $x>0$ small when $t_1(0)<t_1^*(0)$.

For $t_1(0)=t_1^*(0)$, if $y_1'(x)<0$ for $x>0$ small, then $t_1(x)$ is decreasing till a local minimum $q_1\in(0,1)$ of $t_1$ and hence, $t_1(q_1)<t_1(0)$. By Lemma \ref{lem_Spn2monotonicityt2}, $t_1^*$ is increasing on $x\in(0,1)$, which is a contradiction with the inequality $(\ref{ineqn_Spn2t1lbds1})$. Therefore, $t_1$ keeps increasing on $x\in(0,1)$ when $t_1(0)=t_1^*(0)$.

Since $t_1\geq t_1^*$ at the minimum point of $t_1$ and $t_1^*$ is increasing on $x\in(0,1)$, we obtain the inequality $(\ref{ineqn_Spn2t1lbds2})$.

Recall that $(\ref{inequn_boundaryYamabeconstantSpn3})$ holds on $x\in(0,1)$ by Corollary \ref{cor_Spnboundivd}. Therefore, there exists a constant $C=C(\beta)>0$ with $\beta$ in Lemma \ref{lem_Spn2monotonicityt2}, such that
\begin{align*}
|y_i(x)|\leq C(|1-t_1(0)|+|1-t_2(0)|),
\end{align*}
for $x\in[0,1]$ and $i=1,2,3$, and hence for the problem $(\ref{equn_Spn2Einstein01})-(\ref{equn_Spn2BV01})$, the constant $C$ in $(\ref{ineqn_Spny1234leftbds})$ on the interval $x\in(0,1-\epsilon)$ can be replaced by $C(|1-t_1(0)|+|1-t_2(0)|)$ with some constant $C=C(\beta,\tau,\epsilon)>0$.

\end{proof}

\section{Uniqueness of the solution to the boundary value problem $(\ref{equn_SpnEinstein01})-(\ref{equn_SpnBV01})$}\label{section6}

Based on the a priori estimates in Section \ref{sect_Spnsolutionestimates}, we will follow the approach of \cite{Li2} to prove the uniqueness of the conformally compact Einstein metrics filling in for the given conformal infinity $(\mathbb{S}^n, [\hat{g}])$ with $n=4k+3$ and $\hat{g}$ an $\text{Sp}(k+1)$-invariant metric close enough to the round sphere metric.

\begin{proof}[Proof of Theorem \ref{thm_uniquenessSpn}]
For the case $\lambda_i=\lambda_j$ for $1\leq i, j\leq 4$, so that the conformal infinity is the round sphere, the theorem has been proved in \cite{Andersson-Dahl}\cite{Q}\cite{DJ}\cite{LiQingShi}. Otherwise, by Theorem \ref{EHBoundary} and the continuity of the Yamabe constant at a positive scalar curvature metric, for $\frac{\lambda_i}{\lambda_j}$ close to $1$, the conformally compact Einstein manifold $(M, g)$ filling in is negatively curved and simply connected with the closure $\overline{M}$ diffeomorphic to the unit ball $\overline{B}_1\subset \mathbb{R}^{n+1}$. By the discussion in \cite{Li2}, we have that the uniqueness of the conformally compact Einstein metric with $(\mathbb{S}^n, [\hat{g}])$ as its conformal infinity is equivalent to the uniqueness of the solution to the boundary value problem $(\ref{equn_SpnEinstein01})-(\ref{equn_SpnBV01})$.

We assume that the boundary value problem $(\ref{equn_SpnEinstein01})-(\ref{equn_SpnBV01})$ admits two solutions $(y_{11}, y_{12}, y_{13}, y_{14})$ and $(y_{21}, y_{22}, y_{23}, y_{24})$ with $y_{11}=\log(K_1),\,y_{21}=\log(K_2)\,$ and $y_{i(j+1)}=\log(t_{ij})$ for $i=1,2$ and $j\in\{1,2,3\}$, where
\begin{align*}
|1-t_1(0)|+|1-t_2(0)|+|1-t_3(0)|\neq 0,
\end{align*}
and $t_j(0)$ is close to $1$ for $j=1,2,3$. Denote $z_j=y_{1j}-y_{2j}$ for $1\leq j\leq 4$. In general, one is not able to prove that $z_j$ is monotone on $x\in(0,1)$ as the Berger metric case in \cite{Li}. Instead, one turns to the control of the total variation of $z_j$. We will follow the approach of \cite{Li2} to show uniqueness of the solution to $(\ref{equn_SpnEinstein01})-(\ref{equn_SpnBV01})$ for $t_i(0)$ close enough to $1$.

Let $0<\tau_1 <\frac{1}{3}$ and $0<\varepsilon \leq T \equiv\sqrt{n(n^2-1)}$ be two small numbers to be chosen.

We assume that
\begin{align}\label{ineqn_initialdatabbds}
|1-t_1(0)|+|1-t_2(0)|+|1-t_3(0)|\leq \tau_1,
\end{align}
and
\begin{align}
\sup_M|W|_g\leq \varepsilon.
\end{align}
The conditions in Corollary \ref{cor_Spnboundivd}, Lemma \ref{lem_Spnuniformests301} and Lemma \ref{lem_SpnyiboundrightGB} hold by the assumption $(\ref{ineqn_initialdatabbds})$. Therefore, there exists a constant $C_1>0$ independent of $\tau_1$ and $\varepsilon$ such that
\begin{align}\label{ineqn_Spnleftboundy11}
|y_{ij}'(x)|\leq C_1x
\end{align}
for $x\in[0,\frac{3}{4}]$, with $i=1,2$ and $1\leq j\leq 4$; and for any $\delta_0\in(0,1)$, there exists a constant $C_2=C_2(\delta_0)>0$ such that
\begin{align}\label{ineqn_Spnrightboundy11}
|y_{i1}'(x)|\leq C_2\varepsilon^2(1-x)^3,\,\,\, \text{and}\,\,\,|y_{ij}'(x)|\leq C_2\varepsilon (1-x),
\end{align}
for $x\in(\delta_0, 1)$, with $i=1,2$ and $2\leq j\leq 4$. Therefore, let $\delta_0=\frac{\tau_1}{6C_1}$ such that for $x\in(0, \delta_0]$, we have $|y_{ij}'(x)|\leq \frac{\tau_1}{6}$ and by integration,
\begin{align}\label{ineqn_Spnleftboundy012}
|1-t_{i1}(x)|+|1-t_{i2}(x)|+|1-t_{i3}(x)|\leq 2\tau_1.
\end{align}
Moreover, by $(\ref{inequn_boundaryYamabeconstantSpn3})$ and the monotonicity of $K_i$, there exists $C_3>0$ independent of $\tau_1$ such that
\begin{align}\label{ineqn_Spnleftboundy013}
|1-K_i(x)|\leq C_3 \tau_1,
\end{align}
for $x\in[0,1]$ and $i=1,2$. Also, for $x\in[\delta_0,1)$, by integrating $(\ref{ineqn_Spnrightboundy11})$ on $(x, 1)$, we have
\begin{align}\label{ineqn_Spnrightbound012}
|K_i(x)-1|\leq \frac{1}{2}C_2\varepsilon^2(1-x)^4,\,\,\,\text{and}\,\,\,|1-t_{ij}(x)|\leq C_2\varepsilon (1-x)^2,
\end{align}
for $i=1,2$ and $1\leq j \leq 3$.

Recall that if two solutions share the same conformal infinity $[\hat{g}]$ and the same non-local term $g^{(n)}$ in the expansion, then they coincide by \cite{Biquard2}. Also, when the two solutions are distinct, by the Einstein equations $(\ref{equn_SpnEinstein01})-(\ref{equn_SpnEinstein05})$ and the expansion $(\ref{thm_expansion1})$, the zeroes of $z_i$ are discrete on $x\in[0,1]$ for $1\leq i \leq 4$.

 For $i=2, 3, 4$, we define the domain
\begin{align*}
D_i^-=\{x\in(0,1] \big| z_i(x)\leq 0\}.
 \end{align*}
 Let $b_{i1}^-<...<b_{im_i}^-$ be the list of the local minimum points of $z_i$ on $D_i^-$, and we pick up all the (maximal) non-increasing intervals of $z_i$ on $\overline{D}_i^-$ (the closure of $D_i^-$):
 \begin{align*}
 [a_{i1}^-,b_{i1}^-]\bigcup [a_{i2}^-,b_{i2}^-] \bigcup...\bigcup [a_{im_i}^-,b_{im_i}^-]
 \end{align*}
 such that $a_{i1}^-<b_{i1}^-<a_{i2}^-<...<a_{im_i}^-<b_{im_i}^-$ with $m_i$ some integer; while on the domain
 \begin{align*}
 D_i^+=\{x\in(0,1] \big| z_i(x)\geq 0\},
  \end{align*}
  let $b_{i1}^+<...<b_{in_i}^+$ be all the local maximum points of $z_i$ on $D_i^+$ with $n_i$ some integer, and we pick up all the (maximal) non-decreasing intervals of $z_i$ on $\overline{D}_i^+$ (the closure of $D_i^+$):
  \begin{align*}
  [a_{i1}^+,b_{i1}^+]\bigcup [a_{i2}^+,b_{i2}^+] \bigcup...\bigcup [a_{in_i}^+,b_{in_i}^+]
  \end{align*}
such that $a_{i1}^+<b_{i1}^+<a_{i2}^+<...<a_{in_i}^+<b_{in_i}^+$.  Since $z_i\in C^{\infty}([0,1])$ and $z_i'$ has finitely many zeroes, the function $z_i$ is of bounded variation on $x\in[0,1]$. Given an interval $[a,b] \subseteq [0,1]$, we denote $V_a^b(z_i)$ the total variation of $z_i$ on $x \in [a,b]$, and we denote $V(z_i)$ the total variation of $z_i$ on $x\in[0,1]$. When each $t_i(0)$ is close to $1$, we will show by the Einstein equations that the total variation of each $z_i$ is controlled by the linear combination of those of the other three with small coefficients, which implies that the total variation of each $z_i$ vanishes and the two solutions coincide.

 Recall that $z_i(0)=z_i(1)=0$ for $i=2, 3, 4$. By the mean value theorem, there exists at least one zero of $z_i'$ on $x\in(0,1)$. Also for $1 \leq j \leq m_i$, we have either $z_i(a_{ij}^-)=0$, or $z_i(a_{ij}^-)\leq 0$ with $a_{ij}^-$ a local maximum of $z_i$ on $[0,1)$ and $z_i'(a_{ij}^-)=0$. Similarly, for $1 \leq j \leq n_i$, it holds that either $z_i(a_{ij}^+)=0$, or $z_i(a_{ij}^+)\geq 0$ with $a_{ij}^+$ a local minimum of $z_i$ on $[0,1)$ and $z_i'(a_{ij}^+)=0$. Hence we have that for $i=2,3,4$,
 \begin{align*}
\frac{1}{2}V(z_i)&=\displaystyle\sum_{j=1}^{m_i}V_{a_{ij}^-}^{b_{ij}^-}(z_i)+\displaystyle\sum_{j=1}^{n_i}V_{a_{ij}^+}^{b_{ij}^+}(z_i)\\
&=\displaystyle\sum_{j=1}^{m_i}|z_i(b_{ij}^-) - z_i( a_{ij}^-)|\,+\,\displaystyle\sum_{j=1}^{n_i}|z_i(b_{ij}^+) - z_i(a_{ij}^+)|.
\end{align*}
We multiply $x(1-x^2)$ on both sides of $(\ref{equn_SpnEinstein03})$ and obtain
\begin{align}\label{equn_smoothSpnEinstein03}
&(x(1-x^2)y_2')'-(n+(n-2)x^2)y_2'+\frac{1}{2}x(1-x^2)y_1'y_2'\\
&- 8 x(1-x^2)^{-1} (K^{-1}t_1t_2t_3)^{\frac{1}{n}}[(n-1)t_1+2t_2+2t_3-n-5+\frac{2(t_1^2-(t_2-t_3)^2)}{t_1t_2t_3}]=0.\notag
\end{align}
Now we substitute the two solutions to $(\ref{equn_smoothSpnEinstein03})$ and take difference to obtain
\begin{align}\label{equn_Spny12y22z2}
0=&\,(x(1-x^2)z_2')'-(n+(n-2)x^2)z_2'+\frac{1}{2}x(1-x^2)(y_{11}'z_2'+z_1'y_{22}')- 8 x(1-x^2)^{-1}\Phi(x)\\
&- 8 x(1-x^2)^{-1}(K_2^{-1}t_{21}t_{22}t_{23})^{\frac{1}{n}}(n-1+2t_{22}^{-1}t_{23}^{-1})(t_{11}-t_{21}),\notag
\end{align}
with
\begin{align*}
\Phi(x)=&[(K_1^{-1}t_{11}t_{12}t_{13})^{\frac{1}{n}}-(K_2^{-1}t_{21}t_{22}t_{23})^{\frac{1}{n}}]\,[(n-1)t_{11}+2t_{12}+2t_{13}-n-5+\frac{2(t_{11}^2-(t_{12}-t_{13})^2)}{t_{11}t_{12}t_{13}}]\\
&+ 2(K_2^{-1}t_{21}t_{22}t_{23})^{\frac{1}{n}}[t_{12}+t_{13}-t_{22}-t_{23}+t_{11}(\frac{1}{t_{12}t_{13}}-\frac{1}{t_{22}t_{23}})+t_{11}^{-1}\big(\frac{1}{t_{22}}+\frac{1}{t_{23}}-\frac{1}{t_{12}}-\frac{1}{t_{13}}\big)]\\
&+2(K_2^{-1}t_{21}t_{22}t_{23})^{\frac{1}{n}}\frac{(t_{22}-t_{23})^2}{t_{22}t_{23}}(-\frac{1}{t_{11}}+\frac{1}{t_{21}}).
\end{align*}
By $(\ref{ineqn_Spnleftboundy012})-(\ref{ineqn_Spnrightbound012})$, there exist a constant $C_4=C_4(\delta_0)>0$ depending on $\tau_1$ and a constant $C_5>0$ independent of $\tau_1$ and $\varepsilon$ such that
\begin{align}
x(1-x^2)^{-1}|\Phi(x)|\leq C_5x(\tau_1+C_4 \varepsilon)\,\big(\sum_{i=1}^4|z_i(x)|\big),
\end{align}
for $x\in[0,1]$. Integrating the equation $(\ref{equn_Spny12y22z2})$ on $[a_{2j}^{\pm}, b_{2j}^{\pm}]$ to have
\begin{align}\label{ineqn_Spnintz21}
&(x-x^3)z_2'(x)\big|_{x=a_{2j}^{\pm}}+n(z_2(b_{2j}^{\pm})-z_2(a_{2j}^{\pm}))+(n-2)\int_{a_{2j}^{\pm}}^{b_{2j}^{\pm}}x^2z_2'(x)dx\\
=&\int_{a_{2j}^{\pm}}^{b_{2j}^{\pm}}\big[ \frac{1}{2}(x-x^3)(y_{11}'z_2'+z_1'y_{22}')+O(1)\,x\,(\tau_1+C_4 \varepsilon)\,\big(\sum_{i=1}^4|z_i(x)|\big)\notag\\
&- 8 x(1-x^2)^{-1}(K_2^{-1}t_{21}t_{22}t_{23})^{\frac{1}{n}}(n-1+2t_{22}^{-1}t_{23}^{-1})(t_{11}-t_{21})\,\big]\,dx,\notag
\end{align}
for any $j$, with $O(1)$ uniformly bounded, independent of $\tau_1$ and $\varepsilon$. It is clear that the three terms on the left hand side of the equation have the same sign. Notice that 
\begin{align*}
n-1+2t_{22}^{-1}t_{23}^{-1}>0
\end{align*}
for $x\in[0,1]$ and hence the third term on the right hand side of $(\ref{ineqn_Spnintz21})$ has a different sign from the left hand side. Therefore,
\begin{align}\label{ineqn_Spnmainineqnz21}
&n|z_2(b_{2j}^{\pm})-z_2(a_{2j}^{\pm})|\\
&\leq\int_{a_{2j}^{\pm}}^{b_{2j}^{\pm}}\big[ \frac{1}{2}(x-x^3)(|y_{11}'z_2'|+|z_1'y_{22}'|)+C_5x(\tau_1+C_4 \varepsilon)\,\big(\sum_{i=1}^4|z_i(x)|\big)\big]\,dx\notag\\
&\leq \frac{\sqrt{3}}{9}(\tau_1+C_2\varepsilon)\,\big(|z_2(b_{2j}^{\pm})-z_2(a_{2j}^{\pm})|+V_{a_{2j}^{\pm}}^{b_{2j}^{\pm}}(z_1)\big)+C_5(\tau_1+C_4 \varepsilon)\,(b_{2j}^{\pm}-a_{2j}^{\pm})\sum_{i=1}^4\sup_{x\in[0,1]}|z_i|,\notag
\end{align}
for any $j$, where $C_2>0$ is defined in $(\ref{ineqn_Spnrightboundy11})$,  $C_2=C_2(\delta_0)$ and $C_4=C_4(\delta_0)>0$ depend on $\tau_1$ with $\delta_0$ chosen below $(\ref{ineqn_Spnrightboundy11})$, and $C_5>0$ is independent of $\tau_1$ and $\varepsilon$. Summarizing this inequality for all $j$, we have
\begin{align}
nV(z_2)\leq \frac{2\sqrt{3}}{9}(\tau_1+C_2\varepsilon)\,\big(V(z_2)+V(z_1)\big)+2C_5(\tau_1+C_4 \varepsilon)\,\sum_{i=1}^4V(z_i).
\end{align}
Now assume that
\begin{align}\label{ineqn_Spncondition11}
\tau_1\leq\min\{\frac{1}{8C_5},\,\frac{1}{4}\}.
\end{align}
Then we choose $\varepsilon>0$ small so that
\begin{align}\label{ineqn_Spncondition21}
\varepsilon\leq \min\{\frac{1}{4C_2}, \frac{1}{8C_4C_5}\}.
\end{align}
Therefore,
\begin{align}\label{ineqn_Spnvariationz2}
V(z_2)\leq \frac{1}{n-2}(V(z_1)+V(z_3)+V(z_4)).
\end{align}
Similarly, by the equations $(\ref{equn_SpnEinstein04})$ and $(\ref{equn_SpnEinstein05})$ there exists a constant $C_6=C_6(\delta_0)>0$ depending on $\tau_1$, and $C_7>0$ independent of $\tau_1$ and $\varepsilon$ such that when
\begin{align}\label{ineqn_Spncondition12}
\tau_1\leq\min\{\frac{1}{8C_7},\,\frac{1}{4}\},
\end{align}
and
\begin{align}\label{ineqn_Spncondition22}
\varepsilon\leq \min\{\frac{1}{4C_2}, \frac{1}{8C_6C_7}\},
\end{align}
we have
\begin{align}\label{ineqn_Spnvariationz3}
V(z_3)\leq \frac{1}{n-2}(V(z_1)+V(z_2)+V(z_4)),
\end{align}
and
\begin{align}\label{ineqn_Spnvariationz4}
V(z_4)\leq \frac{1}{n-2}(V(z_1)+V(z_2)+V(z_3)).
\end{align}

We then turn to the estimate of $V(z_1)$.

Without loss of generality, we assume that $z_1(0)\geq 0$. As the discussion in \cite{Li2}, using the expansion $(\ref{equn_expansion1})$ of the Einstein metrics, for our two distinct solutions there exists a non-vanishing coefficient in the expansion $z_1(x)=\displaystyle\sum_{k=1}z_1^{(k)}(0)x^k$ at $x=0$. Hence by $(\ref{equn_SpnEinstein02})$ we can always assume that $z_1>0$ and $z_1'>0$  on the first interval of monotonicity $(0, x_1)$ of the function $z_1$.

Let $0=x_0<x_1<...<x_{k_1}<x_{k_1+1}=1$ be the list of all the local maximum points and local minimum points of $z_1$ on $x\in[0,1]$, with $k_1$ some integers. Therefore, for any $0\leq j \leq k_1$, we have that $z_1'$ keeps the sign on $x\in(x_j, x_{j+1})$ with possibly finitely many zeroes on the interval. Multiplying $x(1-x^2)$ on both sides of $(\ref{equn_SpnEinstein01})$, we have
\begin{align}\label{eqn_Spny1regular1301}
&(x(1-x^2)y_1')'-2y_1'+\frac{1}{2n^2}x(1-x^2)[n(y_1')^2\,+\,((n-1)y_2'-y_3'-y_4')^2\\
&+\,(-y_2'+(n-1)y_3'-y_4')^2\,+\,(-y_2'-y_3'+(n-1)y_4')^2+(n-3)(y_2'+y_3'+y_4')^2]=0.\notag
\end{align}
Substitute the two solutions into $(\ref{eqn_Spny1regular1301})$ and take difference, we have
\begin{align}\label{eqn_Spnz1regular1301}
0=&\big(x(1-x^2)z_1'\big)'-2z_1'+\frac{1}{2n}x(1-x^2)(y_{11}'+y_{21}')z_1'+\frac{1}{2n}x(1-x^2)\,\times\\
&[\,\big(\,(n-1)(y_{12}'+y_{22}')-y_{13}'-y_{23}'-y_{14}'-y_{24}'\,\big)\,z_2'+\,\big(\,(n-1)(y_{13}'+y_{23}')-y_{12}'-y_{22}'-y_{14}'-y_{24}'\,\big)\,z_3'\notag\\
&\,+\big(\,(n-1)(y_{14}'+y_{24}')-y_{12}'-y_{22}'-y_{13}'-y_{23}'\,\big)\,z_4'\,].\notag
\end{align}
For each $0\leq j \leq k_1$, we do integration of $(\ref{eqn_Spnz1regular1301})$ on the interval $x\in [x_j,x_{j+1}]$,
\begin{align}\label{eqn_Spnz1regularint1301}
&2(z_1(x_{j+1})- z_1(x_j)) -\frac{1}{2n}\,\int_{x_j}^{x_{j+1}}\,x(1-x^2)(y_{11}'+y_{21}')z_1' dx\\
=&\,\frac{1}{2n}\int_{x_j}^{x_{j+1}}x(1-x^2)\,[\,\big(\,(n-1)(y_{12}'+y_{22}')-y_{13}'-y_{23}'-y_{14}'-y_{24}'\,\big)\,z_2'\notag\\
+&\big((n-1)(y_{13}'+y_{23}')-y_{12}'-y_{22}'-y_{14}'-y_{24}'\big)\,z_3'+\big((n-1)(y_{14}'+y_{24}')-y_{12}'-y_{22}'-y_{13}'-y_{23}'\big)\,z_4'\,] dx.\notag
\end{align}
By $(\ref{ineqn_Spnleftboundy11})-(\ref{ineqn_Spnleftboundy012})$ we have
\begin{align*}
0<x(1-x^2)(y_{11}'+y_{21}')\leq \frac{4\sqrt{3}}{9}(\tau_1+C_2\varepsilon),
\end{align*}
and
\begin{align*}
x(1-x^2)\,|(n-1)(y_{1i}'+y_{2i}')-y_{1j}'-y_{2j}'-y_{1k}'-y_{2k}'|\leq \frac{4(n+1)\sqrt{3}}{9}(\tau_1+C_2\varepsilon)
\end{align*}
for any $\{i,j,k\}=\{2,3,4\}$ and $x\in[0,1]$. Substituting these estimates into $(\ref{eqn_Spnz1regularint1301})$, we have
\begin{align*}
\big(2-\frac{2\sqrt{3}}{9n}(\tau_1+C_2\varepsilon)\big)|z_1(x_{j+1})- z_1(x_j)|
\leq&\,\frac{2(n+1)\sqrt{3}}{9n}(\tau_1+C_2\varepsilon)\int_{x_j}^{x_{j+1}}\,[\,\,|z_2'|+|z_3'|+|z_4'|\,] dx\\
\leq&\,\frac{2(n+1)\sqrt{3}}{9n}(\tau_1+C_2\varepsilon) \sum_{i=2}^4V_{x_j}^{x_{j+1}}(z_i).\notag
\end{align*}
Summarizing this inequality for all $j$, one has
\begin{align*}
\big(1-\frac{\sqrt{3}}{9n}(\tau_1+C_2\varepsilon)\big)V(z_1)
\leq\,\frac{2(n+1)\sqrt{3}}{9n}(\tau_1+C_2\varepsilon) \sum_{i=2}^4V(z_i),
\end{align*}
with $C_2=C_2(\delta_0)>0$  in $(\ref{ineqn_Spnrightboundy11})$ depending on $\tau_1$. Assume that
\begin{align}\label{ineqn_Spncondition13}
\tau_1\leq \frac{1}{2(n+1)},
\end{align}
and then we choose $\varepsilon>0$ so that
\begin{align}\label{ineqn_Spncondition23}
\varepsilon\leq \frac{1}{2(n+1)C_2},
\end{align}
and hence,
\begin{align}\label{ineqn_Spnvariationz1}
V(z_1) \leq\,\frac{1}{n}\sum_{i=2}^4V(z_i).
\end{align}

 By Theorem \ref{EHBoundary}, for $\varepsilon>0$ small, there exists a small constant $\tau_0=\tau_0(\varepsilon)\in(0, 1)$ such that when
\begin{align}
|1-t_1(0)|+|1-t_2(0)|+|1-t_3(0)|\leq \tau_0,
\end{align}
we have that
\begin{align*}
\sup_M|W|_g\leq \varepsilon.
\end{align*}

Now let $\tau=\min\{\tau_1,\tau_0\}$ with $\tau_1$ and $\varepsilon$ satisfying $(\ref{ineqn_Spncondition11}),\,(\ref{ineqn_Spncondition21}),\,(\ref{ineqn_Spncondition12}),\,(\ref{ineqn_Spncondition22}),\,(\ref{ineqn_Spncondition13})$ and $(\ref{ineqn_Spncondition23})$. Thus if
\begin{align*}
|1-t_1(0)|+|1-t_2(0)|+|1-t_3(0)|\leq \tau,
\end{align*}
then $(\ref{ineqn_Spnvariationz2}),\,(\ref{ineqn_Spnvariationz3}),\,(\ref{ineqn_Spnvariationz4})$ and $(\ref{ineqn_Spnvariationz1})$ hold, and hence,
\begin{align*}
z_i(x)=0
\end{align*}
for $x\in[0,1]$ and $1\leq i \leq 4$. Therefore these two solutions coincide. This completes the proof of the theorem.

\end{proof}

\begin{appendix}
\section{}\label{Appendix1}
Let $\mathbb{S}^{15}=\text{Spin}(9)/\text{Spin}(7)$ and $\hat{g}$ be a Spin$(9)$-invariant metric on $\mathbb{S}^{15}$. Here $\text{Spin}(7)$ and $\text{Spin}(9)$ are the Spin groups. In this appendix, when the conformal infinity is $(\mathbb{S}^{15},[\hat{g}])$, we first deform the prescribed conformal infinity problem of the negatively curved
CCE metrics into a two-point boundary value problem of a system of ODEs as we did for the case of $\text{SU}(k+1)$-invariant metrics on $\mathbb{S}^{2k+1}$ in \cite{Li2}. Then we generalize the uniqueness and existence results to this case.

Let $(M^{16},g)$ be a CCE manifold which is Hadamard with its conformal infinity $(\mathbb{S}^{15},[\hat{g}])$.  Let $p_0\in M$ be the center of gravity, $r$ be the distance function to $p_0$ on $(M,g)$ and $x = e^{-r}$ the geodesic defining function about $C\hat{g}$ with some constant
$C > 0$ as discussed \cite{Li} and \cite{Li2}.  Recall that the CCE metric has the forms $(\ref{eqn_metrictwocomponents})$ and $(\ref{equn_splittingmetric01})$ along a fixed geodesic $\gamma$ from $p_0$ to infinity, with $\bar{h}$ diagonal under the polar coordinate $(r,\theta)$ centered at $p_0$ in a neighborhood of $\gamma$:
\begin{align*}
\bar{h}=\text{diag}(I_1,..,I_{15}).
\end{align*}

Considering $\mathbb{S}^{15}$ as the unit ball in $\mathbb{R}^{16}$, we use the coordinate $(x,y)=(x_1,..,x_8,y_1,..,y_8)$. Let $\mathfrak{p}$ be the Lie algebra of $\text{Spin}(9)$, with the splitting $\mathfrak{p}=\mathfrak{p}_1\oplus\mathfrak{p}_2$ as in \cite{Friedrich}, \cite{OPPV} and \cite{VZ}, from which one can derive a basis of $\mathfrak{p}$ as follows:\\
The basis of $\mathfrak{p}_2$ can be chosen as
\begin{align*}
&X_1=\frac{1}{2}(y_8,y_7,-y_6,-y_5,y_4,y_3,-y_2,-y_1,x_8,x_7,-x_6,-x_5,x_4,x_3,-x_2,-x_1),\\
&X_2=\frac{1}{2}(-y_7,y_8,y_5,-y_6,-y_3,y_4,y_1,-y_2,-x_7,x_8,x_5,-x_6,-x_3,x_4,x_1,-x_2),\\
&X_3=\frac{1}{2}(-y_6,y_5,-y_8,y_7,-y_2,y_1,-y_4,y_3,-x_6,x_5,-x_8,x_7,-x_2,x_1,-x_4,x_3),\\
&X_4=\frac{1}{2}(-y_5,-y_6,-y_7,-y_8,y_1,y_2,y_3,y_4,-x_5,-x_6,-x_7,-x_8,x_1,x_2,x_3,x_4),\\
&X_5=\frac{1}{2}(-y_3,-y_4,y_1,y_2,y_7,y_8,-y_5,-y_6,-x_3,-x_4,x_1,x_2,x_7,x_8,-x_5,-x_6),\\
&X_6=\frac{1}{2}(y_4,-y_3,y_2,-y_1,-y_8,y_7,-y_6,y_5,x_4,-x_3,x_2,-x_1,-x_8,x_7,-x_6,x_5),\\
&X_7=\frac{1}{2}(y_2,-y_1,-y_4,y_3,-y_6,y_5,y_8,-y_7,x_2,-x_1,-x_4,x_3,-x_6,x_5,x_8,-x_7),\\
&X_8=\frac{1}{2}(-y_1,-y_2,-y_3,-y_4,-y_5,-y_6,-y_7,-y_8,x_1,x_2,x_3,x_4,x_5,x_6,x_7,x_8),
\end{align*}
while the basis of $\mathfrak{p}_1$ is chosen as
\begin{align*}
&Y_1=-[X_1,X_5]-[X_2,X_6]-[X_3,X_7]-[X_4,X_8],\\
&Y_2=-[X_1,X_7]-[X_2,X_8]+[X_3,X_5]+[X_4,X_6],\\
&Y_3=-[X_1,X_3]+[X_2,X_4]+[X_5,X_7]-[X_6,X_8],\\
&Y_4=-[X_1,X_6]+[X_2,X_5]+[X_3,X_8]-[X_4,X_7],\\
&Y_5=-[X_1,X_8]+[X_2,X_7]-[X_3,X_6]+[X_4,X_5],\\
&Y_6=-[X_1,X_2]-[X_3,X_4]+[X_5,X_6]+[X_7,X_8],\\
&Y_7=-[X_1,X_4]-[X_2,X_3]+[X_5,X_8]+[X_6,X_7].
\end{align*}
It holds that
\begin{align*}
[[X_i,X_j],X_k]=\delta_{ik}X_j-\delta_{jk}X_i,
\end{align*}
for $1\leq i,j,k\leq 8$.

\begin{Remark}
$\text{Spin}(9)$ is the double cover (also the universal cover) of SO$(9)$. They share the same Lie algebra locally. The basis listed in Page $4$ in \cite{VZ} can be used to check whether the basis one chooses is correct. 
 One has the information that the basis of $\text{Spin}(9)$ in $\mathbb{R}^{16}$, $J_1,..,J_9$ ( denoted as $I_1,..,I_9$ in \cite{Friedrich}), acts linearly on a $9$-dimensional subspace $V(q)$ of $T_q\mathbb{R}^{16}$ invariantly. Here $V(q)$ contains the vector $q$ and is not a subset of $T_q\mathbb{S}^{15}$ (It contains an $8$-dimensional sub-space $U(q)$ of $T_q\mathbb{S}^{15}$). $V$ is a sub-bundle on $\mathbb{S}^{15}$ and $\text{Spin}(9)$ acts on it invariantly, see Page $4$, Page $6$ and Page $7$ in \cite{OPPV}, and see also \cite{Friedrich}. In Page $21$ in \cite{Friedrich}, one has that $\text{spin}(9)=Lin(J_{\alpha}J_{\beta}:\,\alpha<\beta)$. $\text{Spin}(9)$ is of dimension $C_9^2=36$. 
 The group operation in $\text{Spin}(9)$ can be considered as the products of the $16\times 16$ matrices $J_1,.., J_9$ (see Page $23$ in \cite{Friedrich}). Also, the basis $X_1,..,X_8$ are given by $J_1J_9,..,J_8J_9$ as in \cite{VZ}. Here $J_1J_9$ is the product of the matrices $J_1$ and $J_9$. To write them as vector fields in $\mathbb{R}^{16}$, just let $X_1^T=J_1J_9(x,y)^T$ with the matrices $J_1,J_9$ and the vector $(x,y)\in\mathbb{R}^{16}$. In Page $4$ of \cite{VZ}, the $9\times9$ matrix $E_{ij}=(\delta_{ij})-(\delta_{ji})$. ( The product $J_{\alpha}J_{\beta}$ of the matrices $J_{\alpha}$ and $J_{\beta}$ in \cite{Friedrich} is corresponding to $E_{\alpha,\beta}$ in \cite{VZ}.) It is easy to check that $[E_{i,9},E_{j,9}]=-E_{ij}$ and $[[E_{i,9},E_{j,9}],E_{k,9}]=\delta_{i,k}E_{j,9}-\delta_{jk}E_{i,9}$. Then the basis of $\mathfrak{p}_1$ satisfies the propositions relating to the basis in $\mathfrak{p}_2$ as above (the Lie bracket structures), see Page $4$ in \cite{VZ}. There is no preference of choice of the order of the indices in the coordinates $x_i$.
\end{Remark}
Therefore,
\begin{align*}
&Y_1=(-x_5,-x_6,-x_8,-x_7,x_1,x_2,x_4,x_3,0,0,-y_8+y_7,y_8-y_7,0,0,y_4-y_3,y_3-y_4),\\
&Y_2=(-x_7,x_8,-x_6,x_5,-x_4,x_3,x_1,-x_2,0,0-y_5-y_6,y_5+y_6,y_3-y_4,y_3-y_4,0,0),\\
&Y_3=(x_3,-x_4,-x_1,x_2,-x_8,x_7,-x_6,x_5,y_3-y_4,y_3-y_4,-y_1-y_2,y_1+y_2,0,0,0,0),\\
&Y_4=(x_6,-x_5,-x_7,x_8,x_2,-x_1,x_3,-x_4,0,0,-y_7-y_8,y_7+y_8,0,0,y_3-y_4,y_3-y_4),\\
&Y_5=(x_8,x_7,-x_5,-x_6,x_3,x_4,-x_2,-x_1,0,0,y_6-y_5,y_5-y_6,y_3-y_4,y_4-y_3,0,0),\\
&Y_6=(-x_2,x_1,-x_4,x_3,x_6,-x_5,-x_8,x_7,0,0,-2y_4,2y_3,0,0,0,0),\\
&Y_7=(x_4,x_3,-x_2,-x_1,-x_7,-x_8,x_5,x_6,y_4-y_3,y_3-y_4,y_1-y_2,y_2-y_1,0,0,0,0).
\end{align*}
Notice that $X_1,..,X_8$ and $Y_1,..,Y_7$ form a basis of $T_q\mathbb{S}^{15}$ at $q=(1,0,..,0)$. In a neighborhood of $q$ on $\mathbb{S}^{15}$ we use the local coordinate $\theta=(\theta^1,..,\theta^{15})=(x_2,..,x_8,y_1,..,y_8)$. Then $q$ is denoted as $\theta_0=(0,..,0)$ under this local coordinate. In order to be more convenient for the calculation, we define the $\text{Spin}(9)$-invariant vector fields
\begin{align*}
&Z_1=Y_6,\,Z_2=Y_3,\,Z_3=Y_7,\,Z_4=Y_1,\,Z_5=Y_4,\,Z_6=Y_2,\,Z_7=Y_5,\\
&Z_8=X_8,\,Z_9=X_7,\,Z_{10}=X_5,\,Z_{11}=X_6,\,Z_{12}=X_4,\,Z_{13}=X_3,\,Z_{14}=X_2,\,Z_{15}=X_1.
\end{align*}
Under the local coordinate $(\theta^1,..,\theta^{15})$,
\begin{align*}
&Z_1=(x_1,-\theta_3,\theta_2,\theta_5,-\theta_4,-\theta_7,\theta_6,0,0,-2\theta_{11},2\theta_{10},0,0,0,0),\\
&Z_2=(-\theta_3,-x_1,\theta_1,-\theta_7,\theta_6,-\theta_5,\theta_4,\theta_{10}-\theta_{11},\theta_{10}-\theta_{11},-\theta_8-\theta_9,\theta_8+\theta_9,0,0,0,0),\\
&Z_3=(\theta_2,-\theta_1,-x_1,-\theta_6,-\theta_7,\theta_4,\theta_5,\theta_{11}-\theta_{10},\theta_{10}-\theta_{11},\theta_8-\theta_9,\theta_9-\theta_8,0,0,0,0),\\
&Z_4=(-\theta_5,-\theta_7,-\theta_6,x_1,\theta_1,\theta_3,\theta_2,0,0,\theta_{14}-\theta_{15},\theta_{15}-\theta_{14},0,0,\theta_{11}-\theta_{10},\theta_{10}-\theta_{11}),\\
&Z_5=(-\theta_4,-\theta_6,\theta_7,\theta_1,-x_1,\theta_2,-\theta_3,0,0,-\theta_{14}-\theta_{15},\theta_{14}+\theta_{15},0,0,\theta_{10}-\theta_{11},\theta_{10}-\theta_{11}),\\
&Z_6=(\theta_7,-\theta_5,\theta_4,-\theta_3,\theta_2,x_1,-\theta_1,0,0,-\theta_{12}-\theta_{13},\theta_{12}+\theta_{13},\theta_{10}-\theta_{11},\theta_{10}-\theta_{11},0,0),\\
&Z_7=(\theta_6,-\theta_4,-\theta_5,\theta_2,\theta_3,-\theta_1,-x_1,0,0,\theta_{13}-\theta_{12},\theta_{12}-\theta_{13},\theta_{10}-\theta_{11},\theta_{11}-\theta_{10},0,0),\\
&Z_8=\frac{1}{2}(-\theta_9,-\theta_{10},-\theta_{11},-\theta_{12},-\theta_{13},-\theta_{14},-\theta_{15},x_1,\theta_1,\theta_2,\theta_3,\theta_4,\theta_5,\theta_6,\theta_7),\\
&Z_9=\frac{1}{2}(-\theta_8,-\theta_{11},\theta_{10},-\theta_{13},\theta_{12},\theta_{15},-\theta_{14},\theta_1,-x_1,-\theta_3,\theta_2,-\theta_5,\theta_4,\theta_7,-\theta_6),\\
\end{align*}
\begin{align*}
&Z_{10}=\frac{1}{2}(-\theta_{11},\theta_8,\theta_9,\theta_{14},\theta_{15},-\theta_{12},-\theta_{13},-\theta_2,-\theta_3,x_1,\theta_1,\theta_6,\theta_7,-\theta_4,-\theta_5),\\
&Z_{11}=\frac{1}{2}(-\theta_{10},\theta_9,-\theta_8,-\theta_{15},\theta_{14},-\theta_{13},\theta_{12},\theta_3,-\theta_2,\theta_1,-x_1,-\theta_7,\theta_6,-\theta_5,\theta_4),\\
&Z_{12}=\frac{1}{2}(-\theta_{13},-\theta_{14},-\theta_{15},\theta_8,\theta_9,\theta_{10},\theta_{11},-\theta_4,-\theta_5,-\theta_6,-\theta_7,x_1,\theta_1,\theta_2,\theta_3),\\
&Z_{13}=\frac{1}{2}(\theta_{12},-\theta_{15},\theta_{14},-\theta_9,\theta_8,-\theta_{11},\theta_{10},-\theta_5,\theta_4,-\theta_7,\theta_6,-\theta_1,x_1,-\theta_3,\theta_2),\\
&Z_{14}=\frac{1}{2}(\theta_{15},\theta_{12},-\theta_{13},-\theta_{10},\theta_{11},\theta_8,-\theta_9,-\theta_6,\theta_7,\theta_4,-\theta_5,-\theta_2,\theta_3,x_1,-\theta_1),\\
&Z_{15}=\frac{1}{2}(\theta_{14},-\theta_{13},-\theta_{12},\theta_{11},\theta_{10},-\theta_9,-\theta_8,\theta_7,\theta_6,-\theta_5,-\theta_4,\theta_3,\theta_2,-\theta_1,-x_1).
\end{align*}

At $\theta=\theta_0$, we have the matrix
\begin{align*}
\left(\begin{matrix}&Z_1(\theta_0)\\ &. \\ &. \\ &Z_{15}(\theta_0) \end{matrix}\right)=\text{diag}(1,-1,-1,1,-1,1,-1,\frac{1}{2},-\frac{1}{2},\frac{1}{2},-\frac{1}{2},\frac{1}{2},\frac{1}{2},\frac{1}{2},-\frac{1}{2}).
\end{align*}

In a neighborhood of $q$ under the coordinate $(\theta^1,..,\theta^{15})$, define the inverse of the matrix with elements $Z_i^j$ for $1\leq i, j \leq n$ as
\begin{align*}
\left(\begin{matrix}&&&\\ &Q_i^j&&\\ &&& \end{matrix}\right)=\left(\begin{matrix}&&&\\ &Z_i^j&&\\ &&& \end{matrix}\right)^{-1}.
\end{align*}
We denote
\begin{align*}
C_{ij}^p=Q_i^q\frac{\partial}{\partial \theta^j}Z_q^p,
\end{align*}
and
\begin{align}\label{equn_twotensor}
T_{ij}^p = - T_{ji}^p=C_{ij}^p- C_{ji}^p=Q_i^qQ_j^m[Z_m, Z_q]^p,
\end{align}
with $[Z_m, Z_q]$ the Lie bracket of $Z_m$ and $Z_q$.
Notice that $C_{ij}^p$ and $T_{ij}^p$ are independent of $r$ and the metric. Then as in \cite{Li}, we have
\begin{align}
&\label{equn_tangentspace}\frac{\partial}{\partial \theta^q}g_{ij}=-C_{qi}^mg_{mj}-C_{qj}^mg_{mi},\\
&\Gamma_{ij}^p(g_r)=\frac{1}{2}[-(C_{ij}^p+C_{ji}^p)+g^{pq}(-C_{iq}^mg_{mj}-C_{jq}^mg_{mi}+C_{qi}^mg_{mj}+C_{qj}^mg_{mi})].
\end{align}
The Ricci curvature of $g_r$ on the geodesic spheres has the expression (see \cite{Li}) 
\begin{align}
R_{ij}(g_r)&\label{equn_Riccitensor}=\frac{1}{2}(\frac{\partial}{\partial \theta^p}T_{ij}^p+C_{ip}^qT_{qj}^p+C_{qj}^pT_{ip}^q+C_{qp}^pT_{ji}^q)-\frac{1}{2}g^{pq}(\frac{\partial}{\partial \theta^p}T_{iq}^m+C_{ip}^sT_{sq}^m+C_{pq}^sT_{is}^m-C_{ps}^mT_{iq}^s)g_{mj}\\
&-\frac{1}{2}g^{pq}(\frac{\partial}{\partial \theta^p}T_{jq}^m+C_{jp}^sT_{sq}^m+C_{pq}^sT_{js}^m-C_{ps}^mT_{jq}^s)g_{mi}+\frac{1}{4}T_{pi}^sT_{sj}^p-\frac{1}{4}g^{pq}T_{pi}^sT_{sq}^mg_{mj}\notag\\
&-\frac{1}{4}g^{pq}T_{pj}^sT_{sq}^mg_{mi}-\frac{1}{2}g^{sq}T_{ps}^pT_{iq}^mg_{mj}-\frac{1}{2}g^{sq}T_{ps}^pT_{jq}^mg_{mi}+\frac{1}{4}g^{pq}T_{pj}^sT_{iq}^mg_{sm}+\frac{1}{4}g^{pq}T_{pi}^sT_{jq}^mg_{sm}\notag\\
&-\frac{1}{4}g^{pl}(T_{jl}^mg_{ms}+T_{sl}^mg_{mj})g^{sq}(T_{pq}^mg_{mi}+ T_{iq}^mg_{mp})\notag.
\end{align}

Notice that
\begin{align*}
g_r=\sinh^2(r)\bar{h}=\frac{x^{-2}(1-x^2)^2}{4}\bar{h}_{ij}d\theta^id\theta^j.
\end{align*}

By direct calculation, at $\theta=\theta_0$, we have
\begin{align*}
&C_{12}^3=C_{15}^4=C_{16}^7=1,\,\,C_{1,11}^{10}=-2,\\
&C_{23}^1=C_{27}^4=C_{25}^6=C_{2,11}^8=C_{2,11}^9=C_{28}^{10}=C_{29}^{10}=1,\\
&C_{31}^2=C_{36}^4=C_{37}^5=C_{3,10}^8=C_{3,11}^9=C_{3,9}^{10}=C_{3,8}^{11}=1,\\
&C_{41}^5=C_{43}^6=C_{42}^7=C_{4,14}^{10}=C_{4,15}^{11}=C_{4,11}^{14}=C_{4,10}^{15}=1,\\
&C_{54}^1=C_{56}^2=C_{53}^7=C_{5,11}^{14}=C_{5,11}^{15}=C_{5,14}^{10}=C_{5,15}^{10}=1,\\
&C_{62}^5=C_{64}^3=C_{67}^1=C_{6,10}^{12}=C_{6,10}^{13}=C_{6,12}^{11}=C_{6,13}^{11}=1,\\
&C_{71}^6=C_{74}^2=C_{75}^3=C_{7,10}^{13}=C_{7,11}^{12}=C_{7,12}^{10}=C_{7,13}^{11}=1,\\
&C_{81}^9=C_{82}^{10}=C_{83}^{11}=C_{84}^{12}=C_{85}^{13}=C_{86}^{14}=C_{87}^{15}=1,\\
&C_{98}^1=C_{9,11}^2=C_{9,13}^4=C_{9,14}^7=C_{93}^{10}=C_{95}^{12}=C_{96}^{15}=1,\\
&C_{10,8}^2=C_{10,9}^3=C_{10,14}^4=C_{10,15}^5=C_{10,1}^{11}=C_{10,6}^{12}=C_{10,7}^{13}=1,\\
&C_{11,10}^1=C_{11,8}^3=C_{11,15}^4=C_{11,13}^6=C_{11,2}^9=C_{11,7}^{12}=C_{11,5}^{14}=1,\\
&C_{12,8}^4=C_{12,9}^5=C_{12,10}^6=C_{12,11}^7=C_{12,1}^{13}=C_{12,2}^{14}=C_{12,3}^{15}=1,\\
&C_{13,12}^1=C_{13,14}^3=C_{13,8}^5=C_{13,10}^7=C_{13,4}^9=C_{13,6}^{11}=C_{13,2}^{15}=1,\\
&C_{14,15}^1=C_{14,12}^2=C_{14,11}^5=C_{14,8}^6=C_{14,7}^9=C_{14,4}^{10}=C_{14,3}^{13}=1,\\
&C_{15,13}^2=C_{15,12}^3=C_{15,9}^6=C_{15,8}^7=C_{15,5}^{10}=C_{15,4}^{11}=C_{15,1}^{14}=1,
\end{align*}
with $C_{ij}^k=-C_{ik}^j$, and $C_{ij}^k=0$ for the remaining cases.

Direct calculation yields
\begin{align*}
&\frac{\partial}{\partial \theta^p}T_{ii}^p=0,\\
&\frac{\partial}{\partial \theta^p}T_{ip}^i=-C_{ip}^qQ_q^qQ_p^p[Z_p,Z_q]^i+Q_i^iQ_p^p\frac{\partial}{\partial \theta^p}([Z_p,Z_i]^i).
\end{align*}
The terms $[Z_p,Z_q]^i, \frac{\partial}{\partial \theta^p}([Z_p,Z_i]^i)$ can be calculated directly using the explicit formulas of $X_t,Y_s$ or $Z_m$ at $\theta=\theta_0$. We only need the calculation of the case $i=1$ and $i=15$: $R_{11}(g_r)$ and $R_{15,15}(g_r)$.
\begin{align*}
&\frac{\partial}{\partial \theta^p}T_{15,p}^{15}=0,\,\,1\leq p\leq 7,\\
&\frac{\partial}{\partial \theta^p}T_{15,p}^{15}=-1,\,\,\text{for}\,\,p=8,9,12,13,14,15,\\
&\frac{\partial}{\partial \theta^p}T_{15,p}^{15}=-2,\,\,\text{for}\,\,p=10,11,
\end{align*}
at $\theta=\theta_0$ and
\begin{align*}
&\frac{\partial}{\partial \theta^p}T_{1p}^1=0,\,\,2\leq p\leq 7,\\
&\frac{\partial}{\partial \theta^p}T_{1p}^1=-1,\,\,\text{for}\,\,p=8,9,12,13,14,15,\\
&\frac{\partial}{\partial \theta^p}T_{1p}^1=-3,\,\,\text{for}\,\,p=10,11.
\end{align*}
Therefore, let $t=\frac{I_8}{I_1}$, and by the same argument in \cite{Li}, we have
\begin{align*}
g_r=I_1 \text{diag}(1,1,1,1,1,1,1,t,t,t,t,t,t,t,t),
 \end{align*}
 and
  \begin{align*}
  \text{Ric}(g_r)=\text{diag}(6+8t^{-2},..,6+8t^{-2},28-14t^{-1},..,28-14t^{-1}).
   \end{align*}
   That is, $R_{ii}(g_r)=6+8t^{-2}$ for $1\leq i \leq 7$, and $R_{ii}(g_r)=28-14t^{-1}$ for $8\leq i \leq 15$.
   \begin{Remark}
   There is a different choice of the vector fields $\{Y_1,..,Y_7\}$ in $(3.1)$ in \cite{Parton-Piccinni}, and one can check directly that the same formula of $\text{Ric}(g_r)$ is derived.
   \end{Remark}

   Let $K=I_1^7I_8^8$. Let $y_1=\log(K)$ and $y_2=\log(t)$, and hence we have
\begin{align}\label{equn_changevariablesSUn}
I_1=(Kt^{-8})^{\frac{1}{15}},\,\,I_8=(Kt^7)^{\frac{1}{15}}.
\end{align}
Therefore, by the same calculation as in \cite{Li} and \cite{Li2}, the boundary value problem of the Einstein metrics
becomes
\begin{align}
&\label{equn_SpinEinstein01}y_1''+\frac{1}{30}[(y_1')^2+ 56(y_2')^2]-x^{-1}(1+3x^2)(1-x^2)^{-1}y_1'=0,\\
&\label{equn_SpinEinstein02}y_1''-[29+31x^2\,]\,x^{-1}(1-x^2)^{-1}y_1'+\frac{1}{2}(y_1')^2\\
&+8\times14(1-x^2)^{-2}[15-K^{-\frac{1}{15}}t^{-\frac{22}{15}}(3t^2+16t-4)]=0,\notag\\
&\label{equn_SpinEinstein03}y_2''-[14+ 16x^2\,]\,x^{-1}(1-x^2)^{-1}y_2'+\frac{1}{2}y_1'y_2'+ 16(1-x^2)^{-2}K^{-\frac{1}{15}}t^{-\frac{22}{15}}(3t^2-14t+11)=0,
\end{align}
for $y_1(x),y_2(x)\in C^{\infty}([0,1])$ with the boundary condition
\begin{align}\label{equn_SpinBV01}
t(0)=\frac{\lambda_2}{\lambda_1},\,\,K(1)=\phi(1)=1,\,\,y_1'(0)=y_2'(0)=y_1'(1)=y_2'(1)=0.
\end{align}
Combining $(\ref{equn_SpinEinstein01})$ and $(\ref{equn_SpinEinstein02})$, we have
\begin{align}\label{equn_SpinEinstein04}
(y_1')^2-4(y_2')^2-60x^{-1}(1+x^2)(1-x^2)^{-1}y_1'+16\times 15(1-x^2)^{-2}(15-K^{-\frac{1}{15}}t^{-\frac{22}{15}}(3t^2+16t-4))=0.
\end{align}
By $(\ref{equn_expansion1})$, we have the expansion of $y_1$ and $y_2$ at $x=0$, which can also be done directly using the system $(\ref{equn_SpinEinstein01})-(\ref{equn_SpinEinstein03})$ and the boundary data $(\ref{equn_SpinBV01})$. Let $\Phi(x)$ be the function on the left hand side of the equation $(\ref{equn_SpinEinstein04})$. Take derivative of $\Phi$ and use the equations $(\ref{equn_SpinEinstein02})$ and $(\ref{equn_SpinEinstein03})$ we have
\begin{align}\label{equn_Spin2-21}
\Phi'+(y_1'-2x^{-1}(14+16x^2)(1-x^2)^{-1})\Phi=0.
\end{align}
Consider $y_1'$ as a given function. Using the expansion $(\ref{equn_expansion1})$, we can derive that $(\ref{equn_Spin2-21})$ has a unique solution $\Phi=0$, which is $(\ref{equn_SpinEinstein04})$. Therefore, $(\ref{equn_SpinEinstein02})$ and $(\ref{equn_SpinEinstein03})$ combining with the expansion of the Einstein metric imply $(\ref{equn_SpinEinstein04})$. Similarly, any two of the equations $(\ref{equn_SpinEinstein01})-(\ref{equn_SpinEinstein03})$ and $(\ref{equn_SpinEinstein04})$ combining with the boundary expansion of the Einstein metric give the other two equations. Notice that the coefficients of the expansion of the metric can be solved inductively by the equations $(\ref{equn_SpinEinstein02})-(\ref{equn_SpinEinstein03})$ and the initial data $(\ref{equn_SpinBV01})$ before the order $x^{15}$.

\begin{lem}\label{lem_monotonicitySpin101}
For the initial data $t(0)\neq 1$, assume $(y_1,y_2)$ is a global solution of the boundary value problem $(\ref{equn_SpinEinstein01})-(\ref{equn_SpinBV01})$. Then we have $y_1'(x)>0$ for $x\in(0,1)$. Also, we have the inequality
\begin{align}\label{ineqn_Spiny2upperbd}
t(x)\geq \min \{1,t(0)\}
\end{align}
for $x\in[0,1]$. Moreover, if $y_2'$ has a zero on $x\in(0,1)$, assume $x_2$ is the largest zero of $y_2'$ on $x\in(0,1)$, then for $x\in(x_2,1)$ we have
\begin{align}\label{ineqn_Spiny2monotonicity1-1}
y_2'(x)<0.
\end{align}
\end{lem}

\begin{proof}
The proof of the inequality $y_1'(x)>0$ on $x\in(0,1)$ is the same as Lemma \ref{lem_monotonicitySpn01}.

The inequality $(\ref{ineqn_Spiny2upperbd})$ holds if $t$ is monotone on $x\in(0,1)$. Now we assume that $\bar{x}$ is a local minimum point of $t$ on $x\in(0,1)$, and hence $y_2'(\bar{x})=0$ and $y_2''(\bar{x})\leq 0$. By $(\ref{equn_SpinEinstein03})$,
\begin{align*}
y_2''(\bar{x})=-16(1-x^2)^{-2}K^{-\frac{1}{15}}t^{-\frac{22}{15}}(t-1)(3t-11)\big|_{x=\bar{x}}\,\,\geq 0.
\end{align*}
Therefore,
\begin{align}\label{ineqn_Spiny2upperbd1}
1 \leq t(\bar{x})\leq \frac{11}{3}.
\end{align}
 This proves the inequality $(\ref{ineqn_Spiny2upperbd})$. Similarly, at a local maximum point $\tilde{x}$ of $t$ on $x\in(0,1)$, we have
\begin{align*}
t(\tilde{x})\geq \frac{11}{3},\,\,\,\text{or}\,\,\,\,t(\tilde{x})\leq 1.
\end{align*}
Since $t(1)=1$ and $t$ has isolated zeroes, by $(\ref{ineqn_Spiny2upperbd1})$ we have that
\begin{align}\label{ineqn_Spiny2lowerbd}
t(\tilde{x})\geq \frac{11}{3}.
\end{align}
Let $x_2$ be the largest zero of $y_2'$ on $x\in(0,1)$. Then $(1-t)y_2'>0$ on $x\in(x_2,1)$. We multiply $K^{\frac{1}{2}}x^{-14}(1-x^2)^{15}$ on both sides of $(\ref{equn_SpinEinstein03})$ to have
\begin{align*}
(K^{\frac{1}{2}}x^{-14}(1-x^2)^{15}y_2')'
&+ 16x^{-14}(1-x^2)^{13}K^{\frac{1}{2}-\frac{1}{15}}t^{-\frac{22}{15}}(t-1)(3t-11)=0,
\end{align*}
 and do integration on $x\in[x_2,1]$ to have
\begin{align}\label{ineqn_Spiny2lowerbd1}
16 \int_{x_2}^1x^{-14}(1-x^2)^{13}K^{\frac{1}{2}-\frac{1}{15}}t^{-\frac{22}{15}}(t-1)(3t-11)dx=0.
\end{align}
If $t-1<0$ on $x\in(x_2,1)$, then $(t-1)(3t-11)>0$, contradicting with $(\ref{ineqn_Spiny2lowerbd1})$. Therefore, $t-1>0$ and $y_2'<0$ on $x\in(x_2,1)$, and moreover,
\begin{align*}
t(x_2)>\frac{11}{3}.
\end{align*}
\end{proof}

By $(\ref{equn_SpinEinstein04})$ and $(\ref{ineqn_Spiny2upperbd})$, for $t(0)\geq \frac{-8+2\sqrt{19}}{3}$ ( to guarantee that $(3t^2+16t-4)>0$ 
), we have
\begin{align}
y_1'&\label{equn_Spiny1lowerorder1}=2x^{-1}(1-x^2)^{-1}[15(1+x^2)-\sqrt{15^2(1+x^2)^2+x^2(1-x^2)^2(y_2')^2-60x^2\Upsilon(x)}\,\,]\\
&=2x^{-1}(1-x^2)^{-1}[15(1+x^2)\notag\\
&-\sqrt{15^2(1-x^2)^2+x^2(1-x^2)^2(y_2')^2+60x^2 K^{-\frac{1}{15}}t^{-\frac{22}{15}}(3t^2+16t-4)}\,\,],\notag
\end{align}
for $x\in(0,1)$, where $\Upsilon(x)=15-K^{-\frac{1}{15}}t^{-\frac{22}{15}}(3t^2+16t-4)$. Since $y_1'>0$ for $x\in(0,1)$, it is clear by $(\ref{equn_Spiny1lowerorder1})$ that
\begin{align}\label{inequn_boundaryYamabeconstantSpin1}
\Upsilon(x)=15-K^{-\frac{1}{15}}t^{-\frac{22}{15}}(3t^2+16t-4)>x^2(1-x^2)^2(y_2')^2\,\geq\, 0,
\end{align}
for $ x\in(0,1)$.
By Lemma \ref{lem_monotonicitySpin101} and $(\ref{equn_Spiny1lowerorder1})$, we have that
\begin{align}\label{equn_Spiny1lowerorder21}
0 < y_1'\leq 30 x^{-1}(1-x^2)^{-1}[(1+x^2)
-\sqrt{(1-x^2)^2}\,\,]\leq 60 x(1-x^2)^{-1},
\end{align}
for $x\in(0,1)$, provided $t(0)\neq 1$. Also by $(\ref{inequn_boundaryYamabeconstantSpin1})$, 
\begin{align*}
15\geq 15 K^{\frac{1}{15}}\geq t^{-\frac{22}{15}}(3t^2+16t-4)
\end{align*}
for $x\in[0,1]$, and hence $t$ has the upper bound
\begin{align}\label{ineqn_Spiny2lowerbd1-2}
t(x)<t_0,
\end{align}
with $t_0>0$ some constant, for $x\in[0,1]$. When $t\geq 1$, $t^{-\frac{22}{15}}(3t^2+16t-4)\geq 3t^{-\frac{22}{15}}(t^2+4t)=3(t^{\frac{8}{15}}+4t^{-\frac{7}{15}})$, and hence we can take $t_0=5^{\frac{15}{8}}$.

Now we give an a priori estimate of the solution away from $x=1$.

\begin{lem}\label{lem_Spinuniformests301}
Let $0<\frac{-8+2\sqrt{19}}{3}+\sigma<t(0)<\frac{11}{3}$ with some constant $\sigma>0$ small. There exists a uniform constant $C=C(\sigma)>0$ independent of the solution and the initial data $t(0)$ such that
\begin{align}\label{ineqn_Spiny1234leftbds}
|y_i^{(k)}(x)| \leq Cx^{2-k},
\end{align}
with $y_i^{(k)}$ the $k-$th order derivative of $x$, for $k=1,2$, $i=1,2$ and $x\in[0,\frac{3}{4}]$. The control still holds on the interval $[0, 1-\epsilon]$ for any $\epsilon>0$ small with some constant $C=C(\sigma,\epsilon)>0$. 
\end{lem}
\begin{proof}
By $(\ref{equn_Spiny1lowerorder21})$ and $(\ref{equn_SpinEinstein02})$, one can easily obtain $(\ref{ineqn_Spiny1234leftbds})$ for $y_1$.
Notice that
\begin{align*}
\min\{1, t(0)\}\leq t(x) \leq t_0,
\end{align*}
for $x\in (0,1)$, and $K(0)< K(x) <1$ for $x\in (0,1)$. By the interior estimates of the second order elliptic equations $(\ref{equn_SpinEinstein01})-(\ref{equn_SpinEinstein03})$ and the inequality $(\ref{ineqn_Spiny2lowerbd1-2})$, there exists some constant $C=C(\sigma)>0$ independent of the initial data and the solution so that
\begin{align*}
|y_i^{(k)}(x)| \leq C,
\end{align*}
for $1\leq i \leq 2$, $0 \leq k\leq 4$ and $x \in [\frac{1}{4}, \frac{3}{4}]$. To get global estimates, we multiply $x^{-14}(1-x^2)^{15}K^{\frac{1}{2}}$ on both sides of the equation $(\ref{equn_SpinEinstein03})$ and do integration on $[x,\frac{3}{4}]$ to have 
\begin{align}
&x^{-14}(1-x^2)^{15}K^{\frac{1}{2}}y_2'=2^{14}(\frac{3}{4})^{15}K^{\frac{1}{2}}(\frac{1}{2})y_2'(\frac{1}{2})\\
&+ 16 \int_x^{\frac{1}{2}}s^{-14}(1-s^2)^{13}K^{\frac{1}{2}-\frac{1}{15}}t^{-\frac{22}{15}}(t-1)(3t-11)ds,\notag
\end{align}
for $x\in(0,\frac{3}{4})$, and hence using the bound of $t$, we have
\begin{align}
\label{ineqn_Spinyd1left302}|y_2'(x)| \leq Cx,
\end{align}
for $x\in(0,\frac{3}{4})$, with some constant $C=C(\sigma)>0$ independent of the solution and the initial data. Substituting $(\ref{ineqn_Spinyd1left302})$ to $(\ref{equn_SpinEinstein03})$, we then have
\begin{align}
&|y_2''(x)| \leq C
\end{align}
for $x\in (0,\frac{3}{4})$, with $C=C(\sigma)>0$ some constant independent of the initial data and the solutions. This completes the proof of the lemma.
\end{proof}

Recall that by the non-positivity of the sectional curvature of $g$ and the Einstein equation $(\ref{eqn_Einstein})$, we have that $|W|_g\leq T\equiv  \sqrt{n(n^2-1)}=4\sqrt{210}$. Using the boundedness of the Weyl tensor, we give an estimate of the solution away from $x=0$.
\begin{lem}\label{lem_SpinyiboundrightGB}
Assume that $|W|_g\leq \varepsilon$ with some constant $0<\varepsilon\leq T$. Let $\delta_0$ be any constant in $(0,1)$.  Under the condition of Lemma \ref{lem_Spinuniformests301}, we have
\begin{align}
&\label{ineqn_Spinyd1right301}|y_1^{(k)}|\leq C\,\varepsilon^2(1-x^2)^{4-k},\\
&\label{ineqn_Spinyd1right302}|y_2^{(k)}|\leq C\,\varepsilon(1-x^2)^{2-k},
\end{align}
for $k=1,2$ and $x \in [\delta_0, 1]$, with some constant $C=C(\sigma,\delta_0)>0$.
\end{lem}
\begin{proof}
By direct calculation, the Weyl tensor satisfies
\begin{align*}
W_{ijk0}=&\frac{1}{4}\frac{d}{dr}g_{jj}[C_{ki}^j-C_{ik}^j+2C_{kj}^ig_{ii}g^{jj}-C_{ji}^k
g_{kk}g^{jj}+C_{ij}^kg_{kk}g^{jj}]+\frac{1}{2}\frac{d}{dr}g_{kk}(C_{ji}^k-C_{ij}^k)\\
&+\frac{1}{2}\frac{d}{dr}g_{ii}[-C_{kj}^i+\frac{1}{2}g^{ii}(-C_{ki}^jg_{jj}+C_{ik}^jg_{jj}-C_{ji}^kg_{kk}+C_{ij}^kg_{kk})].
\end{align*}

Take $i=1,j=10$ and $k=11$. By the condition of the lemma, we have
\begin{align}\label{ineqn_SpinWeylboundmixed}
|(g_{ii})^{-\frac{1}{2}}(g_{qq})^{-\frac{1}{2}}(g_{pp})^{-\frac{1}{2}}W_{piq0}(g)|\leq \varepsilon,
\end{align}
which is
\begin{align*}
&|(g_{ii})^{-\frac{1}{2}}(g_{qq})^{-\frac{1}{2}}(g_{pp})^{-\frac{1}{2}}W_{piq0}(g)|\\
&=|(g_{ii})^{-\frac{1}{2}}(g_{qq})^{-\frac{1}{2}}(g_{pp})^{-\frac{1}{2}}\times\frac{1}{2}g_{11}\frac{d}{dr}(\log(g_{10,10})-\log(g_{11}))|\\
&=\frac{1}{2}|(g_{11})^{-\frac{1}{2}}(g_{10,10})^{-1}\times g_{11}\frac{d}{dr}y_2|\\
&=\frac{1}{2}|(g_{11})^{-\frac{1}{2}}(g_{10,10})^{-1}\times g_{11}(-x\frac{d}{dx}y_2)|\\
&=\frac{x^2}{1-x^2}t^{-1}I_1^{-\frac{1}{2}}|y_2'|\leq \varepsilon.
\end{align*}
Therefore,
\begin{align*}
|y_2'|\leq x^{-2}(1-x^2)K^{\frac{1}{30}}t^{\frac{11}{5}}\varepsilon,
\end{align*}
for $x\in(0,1)$. Therefore, using the bound of $t$,
\begin{align}\label{ineqn_Spiny234-1}
|y_2'|\leq x^{-2}(1-x^2)\delta\varepsilon
\end{align}
for $x\in(0,1)$, with the constant $\delta=t_0^{\frac{11}{5}}$. Set $C=\frac{1}{2\delta_0^2}\delta^{-\frac{3}{5}}$ and hence $(\ref{ineqn_Spinyd1right302})$ holds on $x\in[\delta_0,1]$ for $k=1$. Substituting the inequalities $(\ref{ineqn_Spinyd1right302})$, $(\ref{equn_Spiny1lowerorder21})$, the bound of $K$ and the estimate of $t$ into $(\ref{equn_SpinEinstein03})$, we obtain immediately that there exists a constant $C=C(\sigma,\delta_0)>0$ such that $(\ref{ineqn_Spinyd1right302})$ holds for $k=2$.

For $y_1'$, we multiply $x^{-1}(1-x^2)^2K^{\frac{1}{30}}$ on both sides of $(\ref{equn_SpinEinstein01})$ and do integration on $(x, 1)$ for $\delta_0\leq x \leq 1$ so that
\begin{align*}
&(x^{-1}(1-x^2)^2K^{\frac{1}{30}}y_1')'+\frac{1}{30}x^{-1}(1-x^2)^2K^{\frac{1}{30}}\times 56(y_2')^2=0,\,\,\,\,\text{and}\,\,\\
&y_1'(x)=\frac{28}{15}K(x)^{-\frac{1}{30}}x(1-x^2)^{-2}\int_x^1s^{-1}(1-s^2)^2K^{\frac{1}{30}}(s)\,(y_2')^2ds\\
&\leq C \,\varepsilon^2 (1-x^2)^3,
\end{align*}
with some constant $C=C(\sigma,\delta_0)>0$. Here we have used the estimates of $t$, the monotonicity of $K$ and the inequality $(\ref{ineqn_Spiny234-1})$. Substituting it back to $(\ref{equn_SpinEinstein01})$, we get the estimate $(\ref{ineqn_Spinyd1right301})$ for $y_1''$ with some constant $C=C(\sigma,\delta_0)>0$. This proves the lemma.
\end{proof}

\begin{lem}\label{lem_Spinmonotonicityt}
There exists two constants $\varepsilon>0$ and $\beta>0$ such that if $\sup_M|W|_g\leq \varepsilon$, $t(0)\neq1$ and $1-t(0)\leq \beta$, then $y_2'$ has no zero on $x\in(0,1)$. That is to say, $t$ is monotone on $x\in(0,1)$.
\end{lem}
\begin{proof}
Assume that
\begin{align*}
&\big|1-t(0)\big|\leq \beta,\\
&\sup_M|W|_g\leq \varepsilon,
\end{align*}
with $\beta<\frac{11}{3}-1$ when $t(0)>1$ and $0<\beta<1$ when $t(0)<1$ and $\varepsilon>0$ to be determined. 
By $(\ref{ineqn_Spiny1234leftbds})$, there exists $C_1>0$ independent of $\beta$ and $\varepsilon$ such that
\begin{align*}
|t'|\leq C_1x,
\end{align*}
for $x\in[0,\frac{1}{2}]$, 
and hence,
\begin{align*}
|t(x)-t(0)|\leq \frac{1}{2}C_1x^2
\end{align*}
for $x\in(0,\frac{1}{2})$. Therefore,
\begin{align*}
\min\{t(0),1\}\leq t(x)\leq \frac{11}{3}
\end{align*}
 for
 \begin{align*}
 x\leq x_0\equiv\big(\frac{2(\frac{11}{3}-t(0))}{C_1}\big)^{\frac{1}{2}}.
 \end{align*}
Using the bound $\sup_M|W|_g\leq \varepsilon$ and $(\ref{ineqn_Spinyd1right302})$, we have that
\begin{align*}
|y_2(x_0)|\leq C_2 \varepsilon \int_{x_0}^1(1-s^2)ds= C_2(1-x_0-\frac{1}{3}+\frac{1}{3}x_0^3)\varepsilon.
\end{align*}
for some constant $C_2=C_2(x_0)>0$. Hence, if
\begin{align*}
\varepsilon< \frac{1}{C_2(1-x_0-\frac{1}{3}+\frac{1}{3}x_0^3)}\ln(\frac{11}{3}),
\end{align*}
using the estimate $(\ref{ineqn_Spiny2lowerbd})$ of the local maximum of $t$, we have that there is no local maximum point of $t$ on $x\in(0,1)$. Therefore, by $(\ref{ineqn_Spiny2monotonicity1-1})$ and the boundary condition $(\ref{equn_SpinBV01})$, $y_2'$ has no zero on $x\in(0,1)$. This completes the proof of the lemma.

\end{proof}

We assume that the boundary value problem $(\ref{equn_SpinEinstein01})-(\ref{equn_SpinBV01})$ admits two solutions $(y_{11}, y_{12})$ and $(y_{21}, y_{22})$ with $y_{11}=\log(K_1),\,y_{12}=\log(t_1),\,y_{21}=\log(K_2)\,$ and $y_{22}=\log(t_2)$, for $t(0)\neq 1$ close to $1$. By the same argument in Lemma 5.3 in \cite{Li}, we have 
\begin{lem}\label{lem_zeroSpinz1z2}
Under the condition in Lemma \ref{lem_Spinmonotonicityt}, for any two zeroes $0<x_1<x_2\leq 1$ of $z_1'$ so that there is no zero of $z_1'$ on the interval $x\in(x_1,x_2)$, there exists a point $x_3\in(x_1,x_2)$ so that
\begin{align}\label{inequn_signSpinz201}
(y_{12}'+y_{22}')z_1'z_2'\big|_{x=x_3}<0.
\end{align}
Also, for any zero $0<x_2\leq 1$ of $z_1'$, there exists $\varepsilon>0$ so that for any $x_2-\varepsilon < x <x_2$, we have
\begin{align}\label{inequn_signSpinz202}
(y_{12}'(x)+y_{22}'(x))z_1'(x)z_2'(x)>0.
\end{align}
\end{lem}

It is clear that the function
\begin{align*}
t^{-\frac{22}{15}}(3t^2-14t+11)
\end{align*}
of $t$ decreasing on the interval $t\in(0,t^0)$ with
\begin{align}\label{defn_constant1}
t^0\equiv \frac{\sqrt{49^2+4\times 12 \times 121}-49}{24}=\frac{\sqrt{8209}-49}{24}\sim \frac{41.60353}{24}.
\end{align}
 By the same proof of Theorem 5.4 in \cite{Li}, with the integrating factor $x^{-2}(1-x^2)^3$ in $(5.13)$ in \cite{Li} replaced by $x^{-14}(1-x^2)^{15}$, we have the following uniqueness lemma for the boundary value problem $(\ref{equn_SpinEinstein01})-(\ref{equn_SpinBV01})$. (Notice that Theorem 5.2 in \cite{Li} is not necessary for the uniqueness argument. And for $K_1(0)=K_2(0)$, by the mean value theorem, there exists a zero of $z_1'$ in $x\in(0,1)$, and Theorem 5.4 in \cite{Li} covers this case.)
\begin{lem}\label{lem_SpinmetricStability}
The solution to the boundary value problem $(\ref{equn_SpinEinstein01})-(\ref{equn_SpinBV01})$, with $y_1$ and $y_2$ monotone on $x\in(0,1)$ and $t(0)<t^0$, must be unique if it exists.
\end{lem}

Now we prove the global uniqueness of the conformally compact Einstein metric with $\text{Spin}(9)$ invariant conformal infinity.

\begin{thm}\label{thm_someSpinmetric}
Let $\hat{g}$ be a homogeneous metric on $\mathbb{S}^{15}\cong \text{Spin}(9)/\text{Spin}(7)$ so that $\hat{g}$ has the standard diagonal form
\begin{align}\label{eqn_Spinstandardmetricform}
\hat{g}=\lambda_1(\sigma_1^2+..+\sigma_7^2)+\lambda_2(\sigma_8^2+..+\sigma_{15}^2),
\end{align}
at a point $p\in \mathbb{S}^{15}$, where $\lambda_1$ and $\lambda_2$ are two positive constants and $\sigma_1,..,\sigma_{15}$ are the $1$-forms with respect to the basis vectors in $\mathfrak{p}$, in the $\text{Ad}_{\text{Spin}(9)}$-invariant splitting $spin(9)=spin(7)\oplus \mathfrak{p}$. Assume that $\frac{\lambda_1}{\lambda_2}$ is close enough to $1$, then up to isometry the conformally compact Einstein metric filled in is unique and it is the perturbation metric in \cite{GL} on the $16$-ball $B_1(0)$ with $(\mathbb{S}^{15}, [\hat{g}])$ as its conformal infinity. 
\end{thm}
\begin{proof}[Proof of Theorem \ref{thm_someSpinmetric}]
For the case $t(0)=1$ i.e., $\lambda_1=\lambda_2$ so that the conformal infinity is the round sphere, the theorem has been proved in \cite{Andersson-Dahl}\cite{Q}\cite{DJ}\cite{LiQingShi}.

Now we assume that $\lambda_1\neq \lambda_2.\,$ It is proved in \cite{LiQingShi} and \cite{Cai-Galloway} that for $\frac{\lambda_1}{\lambda_2}$ close enough to $1$, the conformally compact Einstein manifold $(M, g)$ filled in is non-positively curved and simply connected and the closure $\overline{M}=M\bigcup \partial M$ of $M$ is diffeomorphic to the unit ball $\overline{B}_1\subset \mathbb{R}^{16}$. Moreover, the sectional curvature of $g$ is close to $-1$ and hence $\sup_M|W|_g$ is small. Thus, the condition in Lemma \ref{lem_Spinmonotonicityt} is satisfied. Also, let $t(0)<t^0$, with $t^0$ defined in $(\ref{defn_constant1})$.

Pick up a point $q\in \partial M=\mathbb{S}^{15}$. Let $x$ be the geodesic defining function about $C\hat{g}$ with $C>0$ some constant so that $x=e^{-r}$ with $r$ the distance function on $(M,g)$ to the center of gravity $p_0\in M$, see Theorem 3.6 in \cite{Li}.  Under the polar coordinate $(x, \theta)$ with $0\leq x\leq 1$ and $\theta=0$ along the geodesic $\gamma$ connecting $q$ and $p_0$, by \cite{Li2} and Section \ref{Sect:preliminary} we have that  the Einstein equations with prescribed conformal infinity $(\mathbb{S}^{15}, [\hat{g}])$ with $\hat{g}$ the homogeneous metric in $(\ref{eqn_Spinstandardmetricform})$, is equivalent to the boundary value problem $(\ref{equn_SpinEinstein01})-(\ref{equn_SpinBV01})$ along the geodesic $\gamma$ provided that the solution has non-positive sectional curvature. Moreover, since $t(0)$ is close to $1$ and the condition in Lemma \ref{lem_Spinmonotonicityt} is satisfied. By Lemma \ref{lem_Spinmonotonicityt}, $y_1$ and $y_2$ are monotone on $x\in(0,1)$. Moreover, Lemma \ref{lem_zeroSpinz1z2} holds. Then by Lemma \ref{lem_SpinmetricStability}, up to isometries, the CCE metric is unique.
\end{proof}

Using the a priori estimate in Lemma \ref{lem_Spinuniformests301} and Lemma \ref{lem_SpinyiboundrightGB} with $\varepsilon=T$ and Graham-Lee and Lee's perturbation result in \cite{GL} and \cite{Lee}, we obtain directly the following existence theorem by the continuity method as in \cite{Li2}, the proof of which is omitted.

\begin{thm}\label{thm_someSpinmetric1}
Let $B_1\subseteq \mathbb{R}^{16}$ be the unit ball on the Euclidean space with the unit sphere $\mathbb{S}^{15}$ as its boundary. Let $\hat{g}^{\lambda}$ be a homogeneous metric on the boundary $\mathbb{S}^{15}\cong \text{Spin}(9)/\text{Spin}(7)$
so that $\hat{g}$ has the standard diagonal form
\begin{align*}
\hat{g}^{\lambda}=(\sigma_1^2+..+\sigma_7^2)+\lambda(\sigma_8^2+..+\sigma_{15}^2),
\end{align*}
 at a point where $\lambda$ is a positive constant and $\sigma_1,..,\sigma_{15}$ are the $1$-forms with respect to the basis vectors in $\mathfrak{p}$ in the $\text{Ad}_{\text{Spin}(9)}$-invariant splitting $su(9)=su(7)\oplus \mathfrak{p}$. Then as the parameter $\lambda$ varies from $\lambda=1$ continuously on the interval $(\frac{-8+2\sqrt{19}}{3},1]$ (resp. on the interval $[1, \frac{11}{3})$), either it holds that there exists a conformally compact Einstein metric on $B_1$ which is non-positively curved with $(\mathbb{S}^{15}, [\hat{g}^{\lambda}])$ as its conformal infinity for each $\lambda\in (\frac{-8+2\sqrt{19}}{3}, 1]$ (resp. $\lambda\in [1, \frac{11}{3})$); or there exists $\lambda_1\in(\frac{-8+2\sqrt{19}}{3}, 1]$ (resp. $\lambda_1\in[1, \frac{11}{3})$), such that for each $\lambda\in [\lambda_1, 1]$ (resp. $\lambda\in[1,\lambda_1]$) there exists a conformally compact Einstein metric $g^{\lambda}$ on $B_1$ which is non-positively curved with $(\mathbb{S}^{15}, [\hat{g}^{\lambda}])$ as its conformal infinity and there exists $p\in B_1$ such that the sectional curvature of $g^{\lambda_1}$ is zero in some direction at $p$ and moreover, for any $\epsilon>0$ small there exists $\lambda_2\in(\lambda_1-\epsilon, \lambda_1)$ (resp. $\lambda_2\in(\lambda_1, \lambda_1+\epsilon)$) such that there exists a conformally compact Einstein metric $g^{\lambda_2}$ on $B_1$ with $(\mathbb{S}^{15}, [\hat{g}^{\lambda_2}])$ as its conformal infinity and the sectional curvature of $g^{\lambda_2}$ is positive in some direction at some point $p\in B_1$.
\end{thm}
\end{appendix}

 \end{document}